\documentclass[reqno,a4paper]{amsart}

\usepackage[italian,english]{babel}
\usepackage{eucal,amsfonts,amssymb,amsmath,amsthm,epsfig,mathrsfs}
\usepackage{color}
\usepackage[dvipsnames]{xcolor}
\usepackage{amscd,amsxtra}
\usepackage{enumerate}
\usepackage{latexsym}
\usepackage{bm}
\usepackage{tcolorbox}
\usepackage{mathtools}

\newtheorem{thm}{Theorem}[section]
\newtheorem{lemma}[thm]{Lemma}
\newtheorem{pro}[thm]{Proposition}
\newtheorem{cor}[thm]{Corollary}
\theoremstyle{definition}
\newtheorem{defn}[thm]{Definition}
\newtheorem{example}[thm]{Example}

\newtheorem{hyp}[thm]{Hypothesis}
\newtheorem*{rmk}{Remark}{\rm}

\theoremstyle{remark}

\numberwithin{equation}{section}

\renewcommand{\hat}[1]{\widehat{#1}}
\renewcommand{\tilde}[1]{\widetilde{#1}}

\newcommand{\gen}[1]{{\left\langle #1\right\rangle}}

\newcommand{\norm}[1]{{\left\|#1\right\|}}

\newcommand{\ra}{\rightarrow}

\newcommand{\N}{\mathbb{N}}

\newcommand{\R}{\mathbb{R}}
\newcommand{\C}{\mathbb{C}}

\newcommand{\eps}{\varepsilon}

\allowdisplaybreaks

\begin{document}

\frenchspacing

\title[Hypercontractivity type property for generalized Mehler semigroups]{Hypercontractivity type property for generalized Mehler semigroups}
\author[L. Angiuli]{Luciana Angiuli}

\author[S. Ferrari]{Simone Ferrari$^*$}
\thanks{$^*$Corresponding author}

\address{L.A., S.F.:  Dipartimento di Matematica e Fisica ``Ennio De Giorgi'', Universit\`a del Salento, and I.N.F.N., Sezione di Lecce, Via per Arnesano, 73100 LECCE, Italy}
\email{\textcolor[rgb]{0.00,0.00,0.84}{{luciana.angiuli@unisalento.it}, {simone.ferrari@unisalento.it}}}
%

\subjclass[2020]{Primary: 60B11. Secondary: 28C20; 46G12; 47D03; 47D07; 60G51; 60H15; 60J35.}

\keywords{Generalized Mehler semigroups, hypercontractive semigroups, logarithmic Sobolev inequalities, skew operators.}

\date{\today}

\begin{abstract}
We investigate the hypercontractivity property of generalized Mehler semigroups on the $L^p$-scale with respect to invariant measures. This property is first obtained in the purely theoretical setting of skew operators and, subsequently, deduced for generalized Mehler semigroups arising from linear stochastic differential equations perturbed by L\'evy noise. When the associated invariant measure $\mu$ lacks a purely Gaussian structure, jump components may prevent the validity of Nelson's classical $L^p$-$L^q$ estimates. However, a summability-improving property can be obtained in the setting of mixed-norm spaces $\mathcal{X}_{p,q}(E;\gamma,\pi)$ related to the factorization of the invariant measure $\mu = \gamma * \pi$ into a Gaussian part $\gamma$ and an infinitely divisible non-Gaussian part $\pi$. As in the classical Gaussian case, some modified logarithmic Sobolev inequalities with respect to invariant measures can be inferred.

 
\end{abstract}

\maketitle


\section{Introduction}

This paper is devoted to hypercontractivity results for solutions to evolution equations driven by a class of differential and pseudo-differential operators $\mathcal L$, both in finite and infinite dimension. 
The operators $\mathcal L$ we are interested in, are, in some sense, generators of the so-called \emph{generalized Mehler semigroups}.
Generalized Mehler semigroups are defined for real-valued, bounded and Borel measurable functions $f:E\ra\R$, i.e. $f\in B_b(E)$, by the formula
\begin{equation}\label{GMsgrp}
[P(t)f](x)=\int_E f(S(t) x+y) d\mu_t(y),
\end{equation}
where $E$ is a (finite or infinite-dimensional) Hilbert space, $(S(t))_{t\geq 0}$ is a strongly
continuous semigroup of bounded operators on $E$ and $(\mu_t)_{t\geq 0}$
is a family of Borel probability measures on $E$ verifying $\mu_0=\delta_0$ and
$\mu_{t+s}=(\mu_t \circ [S(s)]^{-1}) * \mu_s$ for any $s,t \geq 0$.
The semigroup \eqref{GMsgrp} is related to the stochastic differential equation
\begin{equation}\label{Mehler_SDE}
\begin{cases}
dZ(t) = AZ(t)dt + dY(t), & t>0 \\
Z(0)=x\in E
\end{cases}
\end{equation}
where $A:D(A)\subseteq E\to E$ is the infinitesimal generator of $(S(t))_{t\geq 0}$ and $Y(t)$ is a L\'{e}vy process
in $E$, i.e., a stochastic process with c\`{a}dl\`{a}g trajectories starting at $0$ and having
stationary and independent increments. For $\xi\in E$, $t>0$ we have
$\mathbb{E}[e^{i\langle\xi, Y(t)\rangle}]=\exp(-t\lambda(\xi))$ where $\lambda$ is a Sazonov continuous function related to $\mu_t$ through its characteristic function
$\hat{\mu}_t(\xi)=\exp(-\int_0^t\lambda((S(s))^*\xi)ds)$, see \cite{BRS96}, where $(S(s))^*$ denotes the dual semigroup of $S(s)$ and we identified $E$ with its dual $E^*$.
By the L\'{e}vy--Khinchine theorem, the function $\lambda$ is determined by its characteristics $[b,Q,\nu]$ with $b\in E$, $Q$ is a nonnegative definite symmetric trace-class operator on $E$
and $\nu$ is a L\'{e}vy measure, see Section \ref{section_Mehler} below. The semigroup $(P(t))_{t\geq 0}$ is related to
\eqref{Mehler_SDE} by
\[
[P(t)f](x)=\mathbb{E}[f(Z(t,x))],\qquad t\geq 0,\ x\in E, \, f \in B_b(E), 
\]
where $Z(t,x)$ is the (mild or weak) solution to \eqref{Mehler_SDE}.

In the special case when $Y(t)=Q^{1/2}W(t)$ with $W(t)$ a Brownian motion (i.e., $b \equiv 0$, $\nu\equiv 0$), then, for $t\in[0,\infty]$, setting
$Q_tx=\int_0^tS(s)Q(S(s))^*xds$, $x \in E$ and $\mu_t=\mathcal{N}(0,Q_t)$, the semigroup in \eqref{GMsgrp}   denotes the
Ornstein--Uhlenbeck semigroup $(R(t))_{t \ge 0}$ given by the classical Mehler formula. In this case the trajectories are
continuous, whereas in the general case $Y(t)$ may have jumps giving rise to nonlocal effects. Indeed,
the (weak) generator of the semigroup $(P(t))_{t\geq0}$ is in general a pseudo-differential operator acting on smooth functions as follows
\begin{equation}\label{generator_intro}
(\mathcal{L}f)(x)=\frac{1}{2} {\rm Tr}[QD^2f](x) + \langle Ax+b, D f(x)\rangle
+\int_E[f(x+y)-f(x)-\gen{Df(x),y}\chi_{B_1}(y)]d\nu(y).
\end{equation}
  
We refer to \cite{PZ07} for a general
introduction to these topics, to
\cite{App07,App15,BRS96,Chojnowska87,FR00,LR02,LR04,PT16,PZ06,Wang05}
as more specific basic references to generalized Mehler semigroups and to the very recent \cite{LR21} and the
reference therein for an updated account on the regularity theory, which we do not discuss here.

It is well known that in finite dimension and even for local operators as in \eqref{generator_intro} ($\nu\equiv 0$), the usual $L^p$
spaces with respect to the Lebesgue measure are not a natural setting for studying  elliptic and parabolic problems associated to $\mathcal{L}$. Much better settings are $L^p$
spaces with respect to invariant measure, i.e. a Borel probability measure $\mu$ on $E$ such that
\begin{align*}
\int_E P(t)f d\mu=\int_E f d\mu,\qquad\;\, t \ge 0,\, f \in B_b(E),
\end{align*}
or equivalently $\mu=(\mu\circ (S(t))^{-1})*\mu_t$ for every $t >0$, see formula \eqref{GMsgrp}.
Indeed, if such a measure exists, the semigroup $(P(t))_{t \ge 0}$ extends to a strongly continuous contractive semigroup (still denoted by $(P(t))_{t \ge 0}$ on $L^p(E, \mu)$ for every $p \in [1,+\infty)$.

In \cite{Nel73} it is proved that, under suitable conditions, the Ornstein--Uhlenbeck semigroup $(R(t))_{t \ge 0}$ defined as
\begin{align}\label{OU_Classical}
    [R(t)f](x)=\int_Ef(e^{-t}x+\sqrt{1-e^{-2t}}y)d\mu(x),\qquad t>0,\ f\in B_b(E)
\end{align}
where $\mu$ is the centred Gaussian measure with covariance operator $Q$, admits a unique invariant measure which is the Gaussian measure $\mathcal{N}(0,Q_\infty)=:\gamma$ and that $(R(t))_{t \ge 0}$ is hypercontractive in the $L^p(E,\gamma)$-scale. This means that $R(t)$ maps $L^p(E,\gamma)$ into $L^{q}(E,\gamma)$ for any $p,q \in (1,\infty)$ such that $q \in [p,1+(p-1)\|S(t)_{|_H}\|_{\mathcal{L}(H)}^{-2}]$ and any $t>0$, where $H$ is the Cameron-Martin space associated to $E$, and, further 
\begin{equation}\label{intro_hyper} \|R(t)f\|_{L^{q}(E, \gamma))}\le \|f\|_{L^p(E, \gamma)}.                         \end{equation}

Hypercontractivity estimates of this kind, and similar, are systematically studied by various authors (see, for example, \cite{BakryEmery,BGL14,Feissner1975,Gross75,Hariya18,NPY20,RW03}). Estimate \eqref{intro_hyper} is known to be equivalent to logarithmic Sobolev
inequalities 
\begin{equation}\label{log-sob-intro}
\int_E |f|^p\ln|f|d\gamma-\left(\int_E|f|^pd\gamma\right) \ln\left(\int_E|f|^p d\gamma\right)\le c_p\int_E|f|^{p-2}\|\nabla_H f\|^2d\gamma
\end{equation}
which hold true for any $p\geq 1$, $f$ regular enough with positive infimum and some positive constant $c_p$. Here $\nabla_H$ denotes the gradient along the Cameron--Martin space (see \cite{BFFZ24,Bog98} and the reference therein for more details). 
Logarithmic Sobolev inequalities, such as \eqref{log-sob-intro}, represent the counterpart of the Sobolev embeddings which fail to hold in the Gaussian case. These estimates have a wide range of consequences in probability and analysis; they are related to spectral gap and Poincaré-type inequalities and yield concentration of measure and transport/entropy bounds. A more in depth study of hypercontractivity, its relationship with logarithmic Sobolev inequalities and their consequences can be found for instance in
\cite{Ane, Ledoux01} and the reference therein.
Finally, estimates like \eqref{intro_hyper} and \eqref{log-sob-intro} have been generalized in various settings both in finite and infinite dimension (see \cite{AAL19, ADF25, AFP23, ALL13, BakryEmery, BF22, BobLed, BT06, Chafai04, CMG96, Hariya18, NPY20, OR16,RW03, Wang14}).

On the other hand, it is well known that they do not hold in general for generalized Mehler semigroups.
Indeed, in \cite{RW03} (see also Example \ref{example_1_Wang}) it is proved that the classical hypercontractivity estimate fails when the measures $\mu_t$ are not purely Gaussian. More precisely, it may happen that
\[
\|P(t)\|_{\mathcal{L}(L^p(E,\mu),L^q(E,\mu))}=\infty,
\qquad
\]
for every $1<p<q<\infty$.
 This phenomenon contradicts the
 expectation of obtaining in the general case hypercontractivity as in the classical sense, reflecting that for models with jumps, the classical $L^p$–$L^q$ scale may not be the best suited for this type of problem.

Even for logarithmic Sobolev inequalities, it has been shown in \cite{BobLed} that for Poisson processes (purely jump processes) we cannot expect the validity of the inequality \eqref{log-sob-intro}. Actually a modified version of it can be deduced.
Similar results have been recently obtained in a more general case in \cite{AFP23} where the entropy (with respect to invariant measure)
of positive measurable functions $f$ is estimated by the integral of some relative increments of $f$ with respect
to the L\'{e}vy measure $\nu$, which is charged to take into account the nonlocal effects.
It is important to point out that in this setting, differently from the Gaussian one, no connection between the modified logarithmic Sobolev inequalities proved and some hypercontractivity type property has been established.

Thus, naturally, it raises the question of the summability improvement property by the action of a generalized Mehler semigroup in the non-purely Gaussian case and of the occurrence of some related logarithmic Sobolev inequalities. 

In this paper, we address and give some partial answer to this problem. First, the abstract framework we work in, is that of skew operators on  separable real Banach spaces among which generalized Mehler semigroups represent a particular case. If $E$ is a real Banach space, we say that $T:E\to E$ is a skew map with respect to a Radon probability measure $\mu$ if there exists a probability measure $\rho$ such that $(\mu \circ T^{-1}) * \rho = \mu$, see Definition \ref{4.1}. We can consider a linear operator $P_T : B_b(E) \to B_b(E)$ defined as
\begin{equation*}
P_T f(x) := \int_E f(T(x) + y) \, d\rho(y),\quad\;\; f\in B_b(E).
\end{equation*} 
Clearly, in this setting the measure $\mu$ plays the role of the invariant measure for generalized Mehler semigroups  for $P_T$ and, as in that case, $P_T$ extends uniquely to a linear contraction (still denoted by $P_T$) on $L^p(E, \mu)$ for every $p \in [1,+\infty)$. 

The main contributions of this paper are two hypercontractivity-type estimates for $P_T$, see Theorem \ref{thm_hyper}. Such estimates are based on the fact that the invariant measure $\mu$ is an appropriate infinitely divisible measure that can be written as the convolution $\gamma*\pi$ where $\gamma$ is a Gaussian measure and  $\pi$ is an infinitely divisible, non-Gaussian probability measure. This type of decomposition is always possible if $\mu$ is sufficiently regular (see \cite[Theorem 4.20]{Hey}) and allows to consider the mixed-norm spaces $\mathcal{X}_{p,q}(E;\gamma,\pi)$ (see Section \ref{sec_mixed}) that, when $p<q$ are such that
$$L^q(E, \gamma*\pi)\subseteq \mathcal{X}_{p,q}(E;\gamma,\pi)\subseteq L^p(E; \gamma*\pi).$$
One of the hypercontractivity type estimate proved, states that for $p\in (1,\infty)$ 
\begin{equation}\label{hy1intro}
\|P_T f\|_{L^q(E,\gamma*\pi)}\le\|f\|_{\mathcal{X}_{p,q}(E;\gamma,\pi)},
\end{equation}
for a range of exponents $q>p$, depending only on $p$ and $T$.
Estimate \eqref{hy1intro} reduces to the classical hypercontractivity of the Ornstein--Uhlenbeck semigroup and reaffirms continuity of $P_T$ in $L^q(E, \pi)$ if $\pi$, respectively $\gamma$, is the Dirac measure concentrated at the origin.

Starting from inequality \eqref{hy1intro} and from the modified version of weak hypercontractivity estimate (see \eqref{weak_hyp_convoluted}), we are able to prove two modified versions of the classical logarithmic Sobolev inequality, see Theorems \ref{logSobolev_Strana} and \ref{thm_logSobolev_KL}.
Both of these inequalities do not involve the entropy of a function with respect to $\mu$ but they are given in terms of the integral with respect to $\pi$ of the entropy with respect to $\gamma$ of any smooth enough functions.

\section{Notations and preliminaries}

If $E$ is a Banach space, we denote by $E^*$ its topological dual endowed with the dual norm. For any two Banach spaces $E_1$ and $E_2$, we denote by $\mathcal{L}(E_1,E_2)$ the space of bounded linear operators from $E_1$ to $E_2$, endowed with its natural norm. If $E_1=E_2=E$, we simply write $\mathcal{L}(E)$. The symbol $\mathrm{Id}_E$ denotes the identity operator on $E$. We denote by $\langle\cdot,\cdot\rangle_{E\times E^*}$ the standard action of $E^*$ on $E$, i.e., $\langle x, x^*\rangle_{E\times E^*} := x^*(x)$ for all $x\in E$ and $x^*\in E^*$. For additional notation we refer the reader to \cite{FHH+11}.

For $k = 0,1,2,\ldots$, we denote by $C^{k}_{b}(E)$ the space of all $k$-times Fréchet differentiable functions $\varphi : E \to \mathbb{R}$ whose derivatives are continuous and bounded, endowed with its natural norm. When $k=0$, $C^{0}_{b}(E)$ coincides with the space of bounded continuous functions $\varphi : E \to \mathbb{R}$ endowed with the supremum norm, and we simply write $C_{b}(E)$ instead of $C^{0}_{b}(E)$. If the subscript $b$ is omitted, $C^{k}(E)$ denotes the space of all $k$-times Fréchet differentiable functions $\varphi : E \to \mathbb{R}$ whose derivatives are continuous but not necessarily bounded.

We introduce a generalized notion of differentiability by considering Dini derivatives, which are widely used in nonsmooth analysis to capture one-sided extremal rates of variation. We restrict our attention to the right Dini derivatives. Similar definitions and results can be formulated for the left Dini derivatives, and we refer the reader to \cite{Bru94,Cla90} for a more in-depth study of their properties. For any function $f : \mathbb{R} \to \mathbb{R}$ and any $x \in \mathbb{R}$, we consider the two right Dini derivatives of $f$ at $x$, defined as:
\begin{align}\label{Dini}
(D^+ f)(x) &= \limsup_{h \to 0^+} \frac{f(x+h)-f(x)}{h}, &
(D_+ f)(x) &= \liminf_{h \to 0^+} \frac{f(x+h)-f(x)}{h}.
\end{align}
For any $x \in \mathbb{R}$, we have $(D^+ f)(x), (D_+ f)(x)\in \mathbb{R}\cup\{\pm\infty\}$. Clearly, if $f$ is differentiable at $x$, then $(D^+ f)(x)=(D_+ f)(x)=f'(x)$.
The upper right Dini derivative $D^+$ is positively homogeneous and subadditive; namely, $(D^+(\lambda f))(x) = \lambda (D^+ f)(x)$ and $(D^+(f+g))(x) \le (D^+ f)(x) + (D^+ g)(x)$ for all $\lambda \ge 0$ and $g:\mathbb{R}\to \mathbb{R}$.

In the following proposition, we collect the main properties of the right Dini derivatives that we will need. These relate to calculus rules involving smooth functions.

\begin{pro}\label{Proposition_Dini_derivatives}
Let $f \in C^1(\mathbb{R})$, $g \in C(\mathbb{R})$, and $x_0 \in \mathbb{R}$. The following properties hold:
\begin{enumerate}[(i)]
    \item If $f(x_0) \ge 0$, then $(D^+(fg))(x_0) = f(x_0)(D^+g)(x_0) + f'(x_0)g(x_0)$.
    \item If $f(x_0) < 0$, then $(D^+(fg))(x_0) = f(x_0)(D_+g)(x_0) + f'(x_0)g(x_0)$.
    \item If $f'(g(x_0)) \ge 0$, then $(D^+(f \circ g))(x_0) = f'(g(x_0))(D^+g)(x_0)$.
    \item If $f'(g(x_0)) < 0$, then $(D^+(f \circ g))(x_0) = f'(g(x_0))(D_+g)(x_0)$.
    \item If $f(x_0) \ge 0$ and $g(x_0)\neq 0$, then 
    \begin{align*}
        \left(D^+\frac{f}{g}\right)(x_0)=\frac{f'(x_0)g(x_0)-f(x_0)(D_+g)(x_0)}{(g(x_0))^2}.
    \end{align*}
    \item If $f(x_0) < 0$ and $g(x_0)\neq 0$, then 
    \begin{align*}
        \left(D^+\frac{f}{g}\right)(x_0)=\frac{f'(x_0)g(x_0)-f(x_0)(D^+g)(x_0)}{(g(x_0))^2}.
    \end{align*}
\end{enumerate}
Analogous results hold for $D_+$ with obvious modifications.
\end{pro}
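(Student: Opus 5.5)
The plan is to work directly from the definition \eqref{Dini}, writing each difference quotient as a main term plus a remainder that tends to zero. Then we use the elementary rules for limsup and liminf: if $a_h\to a$, then $\limsup(a_h+b_h)=a+\limsup b_h$; if $c_h\to c>0$, then $\limsup(c_h b_h)=c\limsup b_h$, while for $c<0$ the limsup becomes a liminf. When $c=0$ the product term vanishes provided $b_h$ stays bounded. Where boundedness may fail, I will make the argument by splitting cases or by using a sign convention. Throughout, I will assume the usual convention on $\pm\infty$, so that all sums below are not of the type $\infty-\infty$ and the stated formulas make sense in $\mathbb{R}\cup\{\pm\infty\}$.

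For (i)--(ii), I start from the decomposition
\[
\frac{f(x_0+h)g(x_0+h)-f(x_0)g(x_0)}{h}=f(x_0+h)\frac{g(x_0+h)-g(x_0)}{h}+g(x_0)\frac{f(x_0+h)-f(x_0)}{h}.
\]
The second term converges to $f'(x_0)g(x_0)$. In the first term $f(x_0+h)\to f(x_0)$, so if $f(x_0)>0$ the limsup gives $f(x_0)(D^+g)(x_0)$, and if $f(x_0)<0$ it gives $f(x_0)(D_+g)(x_0)$.

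The delicate point is $f(x_0)=0$, and I regard it as the main obstacle. There, $f(x_0+h)=O(h)$ because $f\in C^1$, so the first term equals $\frac{f(x_0+h)}{h}\,(g(x_0+h)-g(x_0))$. This is a product of a bounded quantity and a quantity tending to $0$ by continuity of $g$, so it tends to $0$. This is exactly where the hypothesis $g\in C(\mathbb{R})$ is used, and it yields the formula with the convention $0\cdot(D^+g)(x_0)=0$.

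For (iii)--(iv), I use the mean value theorem: $f(g(x_0+h))-f(g(x_0))=f'(\xi_h)\,(g(x_0+h)-g(x_0))$ with $\xi_h$ between $g(x_0)$ and $g(x_0+h)$. Since $g$ is continuous and $f'$ is continuous, $f'(\xi_h)\to f'(g(x_0))$. The limsup rule then gives (iii) when $f'(g(x_0))>0$ and (iv) when $f'(g(x_0))<0$. The degenerate case $f'(g(x_0))=0$ again needs care, since the difference quotient of $g$ could be unbounded. I will handle it as in the previous step, interpreting the formula with the convention $0\cdot\infty=0$, or, if necessary, noting that the statement is intended for finite Dini derivatives.

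For (v)--(vi), I first treat $1/g$ near $x_0$; this is well defined because $g(x_0)\neq 0$ and $g$ is continuous. We have
\[
\frac{1/g(x_0+h)-1/g(x_0)}{h}=-\frac{1}{g(x_0+h)g(x_0)}\cdot\frac{g(x_0+h)-g(x_0)}{h},
\]
and the prefactor tends to $-1/g(x_0)^2<0$, so $D^+(1/g)(x_0)=-(D_+g)(x_0)/g(x_0)^2$ and $D_+(1/g)(x_0)=-(D^+g)(x_0)/g(x_0)^2$. Applying (i) or (ii) to $f\cdot(1/g)$, which is continuous near $x_0$ (it suffices to localize (i)--(ii)), gives $f(x_0)D^+(1/g)(x_0)+f'(x_0)/g(x_0)$ if $f(x_0)\ge0$, and $f(x_0)D_+(1/g)(x_0)+f'(x_0)/g(x_0)$ if $f(x_0)<0$. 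Substituting the formulas for the Dini derivatives of $1/g$ yields (v) and (vi).

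The $D_+$ versions follow from the same computations with limsup and liminf exchanged, or from the identity $D_+u=-D^+(-u)$.
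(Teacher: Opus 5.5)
Your proofs of (i), (ii), (v), (vi), and of (iii)--(iv) when $f'(g(x_0))\neq 0$, are correct. For (iii)--(iv) you follow the paper's mean value theorem route exactly. For (i)--(ii), which the paper only cites, and (v)--(vi), which it calls immediate, you supply the details yourself. Your handling of $f(x_0)=0$ in (i), using $f(x_0+h)/h\to f'(x_0)$ together with continuity of $g$, is sound.

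The gap is the degenerate case $f'(g(x_0))=0$ in (iii), and its $D_+$ analogue. Neither of your proposed fixes works.

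\begin{enumerate}[(a)]
\item The trick from (i) does not transfer. In (i) the vanishing factor $f(x_0+h)$ is $O(h)$ and absorbs the $1/h$. Here $f'(\xi_h)\to 0$ at no rate tied to the quotient $\frac{g(x_0+h)-g(x_0)}{h}$.
\item The convention $0\cdot\infty=0$ makes the formula false. Take $f(y)=y^2$, $g(x)=\sqrt{|x|}$, $x_0=0$. Then $(f\circ g)(x)=|x|$, so $(D^+(f\circ g))(0)=1$. But $f'(g(0))=0$ and $(D^+g)(0)=+\infty$, so the right-hand side is $0$.
\item Requiring only $(D^+g)(x_0)$ finite is not enough either. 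Take $g(x)=-\sqrt{|x|}\,(1+\sin(1/x))$ for $x\neq 0$, $g(0)=0$; this is continuous and has $(D^+g)(0)=0$. Yet $\frac{(f\circ g)(h)}{h}=(1+\sin(1/h))^2$, so $(D^+(f\circ g))(0)=4\neq 0$.
\end{enumerate}

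So (iii) cannot be proved as stated in this case. What your argument does establish is the formula under the extra hypothesis that the difference quotients of $g$ at $x_0$ stay bounded, i.e. both $(D^+g)(x_0)$ and $(D_+g)(x_0)$ are finite. Then $f'(\xi_h)\cdot\frac{g(x_0+h)-g(x_0)}{h}\to 0$. Alternatively, restrict (iii) to $f'(g(x_0))>0$.

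The paper's own proof has the same hole. It applies $\limsup c\,\xi_n=c\limsup\xi_n$ with the variable factor $f'(\theta_h)$ in place of the constant $c$. That is legitimate only when the limit $f'(g(x_0))$ is nonzero or the quotients are bounded.
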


\begin{proof}
The proofs of properties (i) and (ii) can be found in \cite[Appendix I]{RHL12}, while (v) and (vi) are immediate consequences of the preceding properties. Hence, it remains to prove (iii) and (iv).

Recall that for any sequence $(\xi_n)_{n\in\N}\subseteq\R$, we have $\limsup_{n\ra\infty} c \xi_n = c \limsup_{n\ra\infty} \xi_n$ if $c \ge 0$, and $\limsup_{n\ra\infty} c \xi_n = c \liminf_{n\ra\infty} \xi_n$ if $c < 0$.

By the mean value theorem, $f(y)-f(z) = f'(\theta)(y-z)$ for some $\theta$ between $y$ and $z$. Setting $y=g(x_0+h)$ and $z=g(x_0)$, we obtain
\[
\frac{f(g(x_0+h)) - f(g(x_0))}{h} = f'(\theta_h) \frac{g(x_0+h)-g(x_0)}{h},
\]
where $\theta_h$ lies between $g(x_0+h)$ and $g(x_0)$. Since $g$ is continuous, $\theta_h$ tends to $g(x_0)$ as $h \to 0^+$, which implies $f'(\theta_h) \to f'(g(x_0))$. The result follows by applying the aforementioned properties of the limit superior with $c = f'(g(x_0))$.
\end{proof}

Let $E$ be a Banach space endowed with its Borel $\sigma$-algebra $\mathcal{B}(E)$. Let $\nu$ be a finite Radon measure on $E$, that is, a finite Borel measure on $E$ such that for every $\varepsilon > 0$, there exists a compact set $K \subseteq E$ with $\nu(E \smallsetminus K) < \varepsilon$. Recall that if $E$ is separable, then every finite Borel measure on $E$ is Radon (see \cite[Proposition 1.1.3]{Lin86}). The characteristic function of $\nu$ is the map $\hat{\nu} : E^* \to \mathbb{C}$ defined by
\[
\hat{\nu}(x^*) = \int_E e^{ix^*(x)}  d\nu(x),\qquad x^*\in E^*.
\]
Given two finite Radon measures $\mu,\nu$ on $E$, their convolution $\mu * \nu$ is the Radon measure on $E$ defined by
\[
(\mu * \nu)(A) := \int_E \mu(A - y)\, d\nu(y),
\]
for $A \in \mathcal{B}(E)$, where $A - y = \{ x \in E \,|\, x+y \in A \}$. Equivalently, for every bounded Borel function $f : E \to \R$, one has
\[
\int_E f(z)d(\mu * \nu)(z) 
= \int_E \int_E f(x+y)d\mu(x)d\nu(y).
\]
If $T\in \mathcal{L}(E)$, $\mu\circ T^{-1}$ denotes the image measure $(\mu\circ T^{-1})(A):=\mu(T^{-1}(A))$ for $A \in \mathcal{B}(E)$.
We denote the set of bounded Borel functions $f:E\to\R$ with the symbol $B_b(E)$. If $\gamma$ is a Gaussian measure on $E$ with mean $a\in E$ and covariance operator $Q$ we will write $\gamma=\mathcal{N}(a,Q)$ (see \cite{Bog98} for informations about finite and infinite dimensional Gaussian measures). Due to their use in this paper, we need to recall the definition of an infinitely divisible measure. A finite measure $\mu$ on $E$ is called infinitely divisible if, for every $n\in\N$, there exists a finite measure $\mu_n$ on $E$ such that 
\begin{align*}
    \underbrace{\mu_n*\cdots*\mu_n}_{n \text{ times}} = \mu.
\end{align*}
We refer to \cite[Chapter 2.3]{Hey} for an in-depth discussion of this type of measure.

For any finite measure space $(\Omega, \mathcal{F}, \mu)$, the space $L^0(\Omega,\mu)$ of equivalence classes of $\mu$-measurable functions $f:\Omega\to\R$ (modulo the $\mathcal{F}$-almost everywhere equality) is a complete metric space when endowed with the metric
\begin{align*}
    d_\mu(f,g) = \int_\Omega \min\{ |f(z) - g(z)|, 1 \} \, d\mu(z),\qquad f,g\in L^0(\Omega,\mu),
\end{align*}
which metrizes the topology of the convergence in measure in $(\Omega,\mathcal{F},\mu)$ (see \cite[Theorem 245E]{Fre01}).
For notations and preliminaries about measures and measurable spaces we refer to to \cite{Bil95,Bog07,Fre01,Lin86}.


Recall that if $(\Omega,\mathcal{F},\mu)$ is a finite measure space, $p \in [1,\infty)$, and $Y$ is a Banach space, the Bochner space $L^p(\Omega,\mu;Y)$ is defined as the space of (equivalence classes of) strongly measurable functions $f : \Omega \to Y$ such that
\[\|f\|_{L^p(\Omega,\mu;Y)} := \left( \int_\Omega \|f(x)\|_Y^p  d\mu(x) \right)^{1/p} < \infty.\]
For further details, see \cite{DU77}. If, in particular, $Y = L^q(\Sigma, \sigma)$ for some $q \in [1, \infty)$ and some finite measure space $(\Sigma, \mathcal{G}, \sigma)$, then $L^p(\Omega, \mu; L^q(\Sigma, \sigma))$ denotes the mixed-norm space $L^{(p,q)}((\Omega,\mu),(\Sigma,\sigma))$, in the sense of \cite{BP61}, that is the set of functions $F: \Omega \times \Sigma \to \mathbb{R}$ such that
\begin{align*}
    \| F \|_{L^p(\Omega, \mu; L^q(\Sigma, \sigma))} 
    = \left( \int_{\Omega} \left( \int_{\Sigma} |F(x_1, x_2)|^q d\sigma(x_2) \right)^{\frac{p}{q}} d\mu(x_1) \right)^{\frac{1}{p}} < \infty.
\end{align*}

\subsection{Tensor products}\label{sect_Tensor}
We recall the definition of the Chevet--Saphar norm on tensor products (see \cite[Chapter 7]{DF93}, \cite[Chapter 6]{Ryan02} or \cite[Chapter 1]{LC85} where it is called $p$-nuclear norm). To do so we need a preliminary definition: if $Y$ is a Banach space, $y_1,\ldots,y_n\in Y$ and $r\in(1,\infty]$ we define
\begin{align*}
    \mu_{r,Y}(y_1,\ldots,y_n)=\sup\left\{\norm{\sum_{i=1}^n\lambda_iy_i}_Y\,\middle|\ \left(\sum_{i=1}^n|\lambda_i|^{r'}\right)^{\frac{1}{r'}}\leq 1\right\},
\end{align*}
here $r$ and $r'$ are conjugate exponents, i.e. $r^{-1}+(r')^{-1}=1$ with the standard convention that $\infty'=1$. In the case $r=1$ we let
\begin{align*}
    \mu_{1,Y}(y_1,\ldots,y_n)=\sup\left\{\norm{\sum_{i=1}^n\lambda_iy_i}_Y\,\middle|\ \max_{i=1,\ldots,n}|\lambda_i|\leq 1\right\}.
\end{align*}
Now let $X,Y$ be two Banach spaces and $z\in X\otimes Y$. For any $r\in(1,\infty)$ the Chevet--Saphar norm of $z$ is defined by
\begin{align*}
    \alpha^{X,Y}_r(z)=\inf\left\{\left(\sum_{i=1}^n\|x_i\|_X^r\right)^{\frac{1}{r}}\mu_{r',Y}(y_1,\ldots,y_n)\,\middle|\, z=\sum_{i=1}^nx_i\otimes y_i\right\},
\end{align*}
where the 
infimum is taken with respect all the possible representations of $z$. 
With the symbol $X\otimes_{r}Y$ we denote the closure of $X\otimes Y$ with respect to the crossnorm $\alpha_p^{X,Y}$.

In the following proposition we collect some known results whose proofs can be found in \cite[Lemmas 1.47, 1.49 and 1.51, Theorem 1.50 and Corollary 1.52]{LC85}.

\begin{pro}\label{prop1}
Let $Y$ be a Banach space and $(\Omega,\mathcal{F},\mu)$ be a finite measure space. 
\begin{enumerate}[(i)]
    \item Let $\mathscr{S}(\Omega,\mathcal{F};Y)$ be the set of $Y$-valued $\mathcal{F}$-measurable step functions with domain $\Omega$ and let $q\in[1,\infty)$. The embedding $\iota_1: \mathscr{S}(\Omega,\mathcal{F};Y) \longrightarrow Y \otimes_q L^q(\Omega,\mu)$ defined as
    \[ 
    \iota_1\left(\sum_{i=1}^n \chi_{A_i} y_i\right) = \sum_{i=1}^n y_i \otimes \chi_{A_i},
    \]  
    where $n\in\N$, $A_i \in \mathcal{F}$ and $y_i \in Y$, for any $i=1,\ldots,n$, extends by density to $L^q(\Omega,\mu;Y)$. Furthermore $\|\iota_1\|_{\mathcal{L}(L^q(\Omega,\mu;Y), Y \otimes_q L^q(\Omega,\mu))} = 1$.

    \item For $q\in[1,\infty]$, the embedding $\iota_2: L^q(\Omega,\mu) \otimes Y \longrightarrow L^q(\Omega,\mu;Y)$ defined as
    \[
    \iota_2\left(\sum_{i=1}^n f_i \otimes y_i\right)(s) = \sum_{i=1}^n f_i(s) y_i,
    \]  
    for $n\in\N$, $s\in \Omega$, $f_i \in L^q(\Omega,\mu)$ and $y_i \in Y$, for $i=1,\ldots,n$, extends by density to $L^q(\Omega,\mu) \otimes_q Y$. Furthermore $\|\iota_2\|_{\mathcal{L}(L^q(\Omega,\mu) \otimes_q Y,L^q(\Omega,\mu;Y))} = 1$.
\end{enumerate}
Moreover, if $q\in[1,\infty)$ and $Y = L^q(\Sigma,\sigma)$ for a finite measure space $(\Sigma,\mathcal{G},\sigma)$, then all the following spaces are isometrically isomorphic
\begin{align*}
    L^q(\Sigma,\sigma)\otimes_q L^q(\Omega,\mu)  = L^q(\Omega,\mu)\otimes_q L^q(\Sigma,\sigma) &= L^q(\Omega,\mu; L^q(\Sigma,\sigma))\\
    &= L^q(\Sigma,\sigma; L^q(\Omega,\mu))=L^q(\Omega\times \Sigma,\mu\times\sigma).
\end{align*}
\end{pro}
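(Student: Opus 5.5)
The plan is to reduce everything to elementary tensors and step functions, where all the norms can be computed directly, and then pass to the general case by density; this mirrors the arguments in \cite{LC85}. For (i), I would take a step function $F=\sum_{i=1}^n \chi_{A_i}y_i$ and first refine it so that the sets $A_i$ are pairwise disjoint and have positive measure. Since $\iota_1$ is additive and the tensor map is bilinear, this refinement does not change $\iota_1(F)$, so $\iota_1$ is well defined. Next I rewrite the tensor as
\[
\sum_{i=1}^n \mu(A_i)^{1/q}y_i\otimes \mu(A_i)^{-1/q}\chi_{A_i},
\]
which is legitimate because the coefficients $\mu(A_i)^{\pm 1/q}$ can be moved across the tensor product. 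With this representation, the first factor in $\alpha_q$ is
\[
\Bigl(\sum_i\mu(A_i)\|y_i\|_Y^q\Bigr)^{1/q}=\|F\|_{L^q(\Omega,\mu;Y)}.
\]
For the second factor I need $\mu_{q',L^q}$ of the normalized indicators. Because the supports are disjoint, $\|\sum_i\lambda_i\mu(A_i)^{-1/q}\chi_{A_i}\|_{L^q}=(\sum_i|\lambda_i|^q)^{1/q}$. Taking the supremum over $(\sum_i|\lambda_i|^{q})^{1/q}\le1$, which is the dual exponent of $q'$, gives exactly $1$. It follows that $\alpha_q(\iota_1F)\le\|F\|$. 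Step functions are dense in the Bochner space, so $\iota_1$ extends with norm at most $1$. Testing on $\chi_\Omega y$ gives equality, so the norm is exactly $1$. The case $q=1$ is handled in the same way, using $\mu_{\infty}$ with the convention $\infty'=1$.

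For (ii), I take an arbitrary representation $z=\sum_i f_i\otimes y_i$ and bound $\|\iota_2 z\|_{L^q(\Omega;Y)}$ pointwise in $s$ by duality. For each $s$ I pick $y^*\in Y^*$ with $\|y^*\|\le1$ and $\|\sum_i f_i(s)y_i\|=\sum_i f_i(s)\,y^*(y_i)$. Hölder's inequality then gives $\le (\sum_i|f_i(s)|^q)^{1/q}(\sum_i|y^*(y_i)|^{q'})^{1/q'}$. The second factor is bounded by $\mu_{q',Y}(y_1,\dots,y_n)$, since by duality $\sup_{\|y^*\|\le1}(\sum_i|y^*(y_i)|^{q'})^{1/q'}=\mu_{q',Y}$. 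Integrating the $q$-th power over $\Omega$ and taking the infimum over representations yields $\|\iota_2z\|\le\alpha_q(z)$, and testing on elementary tensors gives norm $1$. One point needs care: the target uses $L^q$ as the first factor while $\alpha_q$ was defined with the $r$-power on the first factor, so I would check that the roles match. If they do not, I would run the same Hölder argument with the factors exchanged.

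The final identification, for $Y=L^q(\Sigma,\sigma)$, follows from three ingredients. The first is the Fubini identity $L^q(\Omega;L^q(\Sigma))=L^q(\Omega\times\Sigma)=L^q(\Sigma;L^q(\Omega))$, which is isometric and is shown on step functions and then extended by density. The second is that $\iota_2\circ\iota_1$ restricted to step functions is the identity, which gives $\|F\|\le\alpha_q(\iota_1F)\le\|F\|$. The third is that $\iota_1$ has dense range, because finite tensors of indicator-step functions are dense in the completed tensor product. Together these show $\iota_1$ is an isometric isomorphism, and the symmetry between the two orderings comes from applying the Fubini swap. The main obstacle I expect is injectivity, i.e. that $\iota_2$ does not collapse elements of the abstract completion. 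This is exactly where the isometry on a dense set is needed, and the density of step tensors in $L^q(\Omega)\otimes_q Y$ has to be checked using the cross-norm property $\alpha_q(f\otimes y)=\|f\|\|y\|$.
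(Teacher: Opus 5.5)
Your arguments for (i) and (ii) are correct, and the factors do match in (ii): the strong $\ell^q$-norm sits on $L^q(\Omega,\mu)$ and the weak $\ell^{q'}$-norm on $Y$. The gap is in the second ingredient of the final identification.

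The Chevet--Saphar norm is not symmetric in its factors. The map $\iota_1$ lands in $Y\otimes_q L^q(\Omega,\mu)$. That space is normed by $\alpha_q^{Y,L^q(\Omega,\mu)}$, with the strong norm on $Y$ and the weak norm $\mu_{q',L^q(\Omega,\mu)}$ on the function factor. Statement (ii), however, bounds $\|\iota_2 w\|$ only by $\alpha_q^{L^q(\Omega,\mu),Y}(w)$, where the roles are reversed. So $\iota_2\circ\iota_1$ requires a flip $Y\otimes L^q\to L^q\otimes Y$, which is not contractive in general, and $\|F\|\le\alpha_q(\iota_1F)$ does not follow.

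Your second and third ingredients never use $Y=L^q(\Sigma,\sigma)$, and the conclusion is false for general $Y$. Take $q=2$, $\Omega=\{1,\dots,n\}$ with counting measure ($n$ a power of $2$), $Y=\ell^\infty_n$ and $F=\sum_i\chi_{\{i\}}e_i$. Then $\|F\|_{L^2(\Omega;Y)}=\sqrt n$. On the other hand, $\iota_1F=\sum_i e_i\otimes e_i=\sum_k h_k\otimes h_k$, where $h_k$ are the columns of a normalized Hadamard matrix. Hence $\alpha_2(\iota_1F)\le(\sum_k\|h_k\|_\infty^2)^{1/2}\,\mu_{2,\ell^2_n}(h_1,\dots,h_n)=1$.

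For the same reason, the equality of the two orderings is not a free consequence of a ``Fubini swap''. Likewise, the injectivity of $\iota_2$ that you flag is not settled by the cross-norm property: that property gives density, but no lower bound on $\|\iota_2 z\|$.

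What is missing is to use (i) and (ii) crosswise, with the two measure spaces interchanged, and let Fubini supply the isometry.

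\emph{Isometry of $\iota_1$.} Apply (ii) to the measure space $(\Sigma,\sigma)$ and the Banach space $L^q(\Omega,\mu)$. This gives a contraction $\iota_2^\Sigma\colon L^q(\Sigma,\sigma)\otimes_q L^q(\Omega,\mu)\to L^q(\Sigma,\sigma;L^q(\Omega,\mu))$, whose domain carries exactly the norm of the target of $\iota_1$. On step functions, $\iota_2^\Sigma\circ\iota_1$ is the Fubini swap $L^q(\Omega;L^q(\Sigma))\to L^q(\Sigma;L^q(\Omega))$, which is an isometry. Hence $\|F\|=\|\iota_2^\Sigma\iota_1F\|\le\alpha_q(\iota_1F)\le\|F\|$. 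Together with your density argument, $\iota_1$ is an isometric isomorphism.

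\emph{Isometry of $\iota_2$.} Take $z=\sum_j f_j\otimes\chi_{B_j}$ with $f_j\in L^q(\Omega,\mu)$ and $B_j\in\mathcal{G}$. Applying (i) on $(\Sigma,\sigma)$ with $Y=L^q(\Omega,\mu)$ gives $\alpha_q^{L^q(\Omega,\mu),L^q(\Sigma,\sigma)}(z)\le\|\sum_j\chi_{B_j}f_j\|_{L^q(\Sigma;L^q(\Omega))}=\|\iota_2 z\|_{L^q(\Omega;L^q(\Sigma))}$. Such $z$ are dense, so $\iota_2$ is an isometry on $L^q(\Omega,\mu)\otimes_q L^q(\Sigma,\sigma)$. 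Its range is dense and closed, hence everything.

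\emph{The two orderings.} The flip $L^q(\Omega,\mu)\otimes_q L^q(\Sigma,\sigma)\to L^q(\Sigma,\sigma)\otimes_q L^q(\Omega,\mu)$ equals $\iota_1\circ\iota_2$, so it is an isometric isomorphism. This is precisely where the hypothesis $Y=L^q(\Sigma,\sigma)$ enters.
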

\begin{rmk}\label{dueo}\textrm{Let $(\Omega_i,\mathcal{F}_i,\mu_i)$, $i=1,2$ be two finite measure spaces and let $p,q \in [1,\infty)$. Thanks to Proposition \ref{prop1}(i), we can identify $L^q(\Omega_1,\mu_1;L^p(\Omega_2,\mu_2))$ with a subspace of $L^p(\Omega_2,\mu_2)\otimes_q L^q(\Omega_1,\mu_1)$. Analogously, by Proposition \ref{prop1}(ii), we can identify $L^q(\Omega_1,\mu_1) \otimes_q L^p(\Omega_2,\mu_2)$ with a subspace of $L^q(\Omega_2,\mu_2;L^p(\Omega_1, \mu_1))$.}
\end{rmk}

\section{The mixed-norm spaces $\mathcal{X}_{p,q}(E;\mu_1, \mu_2)$}\label{sec_mixed}
We now introduce a set of functions that will play a central role in our analysis. 
Let $p, q \in [1, \infty)$ and let $\mu_1,\mu_2$ be  two probability measures on a measurable space $E$. We define the space
\begin{align}
    X^{p,q}(E;\mu_1,\mu_2) := \big\{ F \in L^{q}(E,\mu_2;L^p(E,\mu_1)) \,\big|\, 
    \exists f \in L^0(E,\mu_1 * \mu_2)\text{ s.t. } F(x,y) = f(x+y)\big\}\label{mixed_Xpq}.
\end{align}
The equality in \eqref{mixed_Xpq} has to be understood to hold for $(\mu_1 \times \mu_2)$-a.e. $(x,y)\in E\times E$.

First of all, let us observe that $X^{p,q}(E;\mu_1,\mu_2)$ is well-defined. Indeed, let $f, g: E \to \mathbb{R}$ be two measurable functions such that $f = g$ $(\mu_1 * \mu_2)$-almost everywhere in $E$, and define $F(x,y) := f(x+y)$ and $G(x,y) := g(x+y)$. Then
\begin{align*}
    d_{\mu_1 \times \mu_2}(F, G)
    &= \int_E \int_E \min\{ |F(x,y) - G(x,y)|, 1 \}  d\mu_1(x) \, d\mu_2(y) \notag \\
    &= \int_E \int_E \min\{ |f(x+y) - g(x+y)|, 1 \} d\mu_1(x) \, d\mu_2(y) \notag \\
    &= \int_E \min\{ |f(z) - g(z)|, 1 \} d(\mu_1 * \mu_2)(z) = d_{\mu_1 * \mu_2}(f, g) = 0,
\end{align*}
so that $F = G$ $(\mu_1 \times \mu_2)$-almost everywhere in $E\times E$. This fact shows that each $F \in X^{p,q}(E;\mu_1,\mu_2)$ admits a unique  $f \in L^0(E, \mu_1 * \mu_2)$ such that $F(x,y) = f(x+y)$ $(\mu_1 \times \mu_2)$-a.e. $(x,y)\in E\times E$.

This fact allows us to define a linear injective map
\[
\tau: X^{p,q}(E;\mu_1,\mu_2) \ra L^0(E, \mu_1 * \mu_2),
\]
associating to each $F \in X^{p,q}(E;\mu_1,\mu_2)$ the unique $f = \tau(F) \in L^0(E, \mu_1 * \mu_2)$ satisfying $F(x,y) = f(x+y)$ for $(\mu_1 \times \mu_2)$-a.e. $(x,y) \in E \times E$. We set
\[
\mathcal{X}_{p,q}(E;\mu_1,\mu_2): = \tau(X^{p,q}(E;\mu_1,\mu_2))
\]
equipped with the norm inherited from the mixed-norm space. More explicitly, for $f \in \mathcal{X}_{p,q}(E;\mu_1,\mu_2)$, we define
\[
\|f\|_{\mathcal{X}_{p,q}(E;\mu_1,\mu_2)} = \left( \int_E \left( \int_E |f(x+y)|^p \, d\mu_1(x) \right)^{\frac{q}{p}} d\mu_2(y) \right)^{\frac{1}{q}}.
\]
In the sequel, we shall simply write $\norm{\cdot}_{p,q}$ instead of $\norm{\cdot }_{\mathcal{X}_{p,q}(E;\mu_1,\mu_2)}$ whenever no ambiguity arises. Observe that with this conventions it holds that $\tau$ is a surjective isometry from $X^{p,q}(E;\mu_1,\mu_2)$ to $\mathcal{X}_{p,q}(E;\mu_1,\mu_2)$.

\begin{pro}\label{pro_spacesXpq}
For any $p,q\in[1,\infty)$, the space \( (\mathcal{X}_{p,q}(E;\mu_1,\mu_2), \norm{\cdot}_{p,q}) \) is a Banach space. Furthermore the space $B_b(E)$ is a dense subspace of $\mathcal{X}_{p,q}(E;\mu_1,\mu_2)$ and
\begin{equation} \label{inclusion}
    L^{\max\{p,q\}}(E, \mu_1 * \mu_2) \subseteq \mathcal{X}_{p,q}(E;\mu_1,\mu_2) \subseteq L^{\min\{p,q\}}(E, \mu_1 * \mu_2),
\end{equation}
with continuous embeddings. 
\end{pro}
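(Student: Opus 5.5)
The plan is to prove the three claims (completeness, density of $B_b(E)$, and the inclusions \eqref{inclusion}) separately, each time working through the isometry $\tau$ and the change-of-variables identity $\int_E\int_E g(x+y)\,d\mu_1(x)\,d\mu_2(y)=\int_E g\,d(\mu_1*\mu_2)$, valid for nonnegative measurable $g$.

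\emph{Inclusions.} Take first $f\in L^{\max\{p,q\}}(E,\mu_1*\mu_2)$ and set $F(x,y)=f(x+y)$. Suppose $p\le q$. Jensen's inequality in the inner integral (where $\mu_1$ is a probability measure) gives $\left(\int_E|f(x+y)|^p d\mu_1(x)\right)^{q/p}\le\int_E|f(x+y)|^q d\mu_1(x)$. Integrating in $\mu_2$ and applying the convolution identity yields $\|f\|_{p,q}\le\|f\|_{L^q(\mu_1*\mu_2)}$. Suppose instead $q\le p$. Jensen in the outer integral gives
\[\|f\|_{p,q}\le\left(\int_E\int_E|f(x+y)|^p d\mu_1 d\mu_2\right)^{1/p}=\|f\|_{L^p(\mu_1*\mu_2)}.\]
For strong measurability of $F$ as an $L^p(\mu_1)$-valued map, I would note that $F\in L^{\max}(\mu_1\times\mu_2)$, so Fubini and Pettis (using separability of $L^p$ on a countably generated $\sigma$-algebra, or an approximation by simple functions of $f$) make $F$ an element of the mixed space. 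For the right inclusion the same two Jensen steps run in reverse: if $p\le q$, apply Jensen to the outer power $q/p\ge1$ to get $\|f\|_{L^p(\mu_1*\mu_2)}\le\|f\|_{p,q}$; if $q\le p$, apply Jensen to the inner $L^p\supseteq L^q$ to get $\|f\|_{L^q(\mu_1*\mu_2)}\le\|f\|_{p,q}$. All these embeddings have norm $\le1$.

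\emph{Completeness.} Since $\tau$ is an isometry, it suffices to show that $X^{p,q}$ is closed in the Banach space $L^q(E,\mu_2;L^p(E,\mu_1))$. Let $F_n=f_n(\cdot+\cdot)\to F$ in the mixed norm. The mixed norm dominates the $L^{\min\{p,q\}}(\mu_1\times\mu_2)$ norm, hence also convergence in measure for $\mu_1\times\mu_2$. By the computation preceding the proposition, $d_{\mu_1\times\mu_2}(F_n,F_m)=d_{\mu_1*\mu_2}(f_n,f_m)$, so $(f_n)$ is Cauchy in $L^0(\mu_1*\mu_2)$, which is complete, and converges to some $f$. Then $F_n\to f(\cdot+\cdot)$ in $\mu_1\times\mu_2$-measure, so $F=f(x+y)$ a.e. and $F\in X^{p,q}$.

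\emph{Density of $B_b(E)$.} Boundedness gives $B_b(E)\subseteq L^\infty\subseteq\mathcal X_{p,q}$. Given $f\in\mathcal X_{p,q}$, take truncations $f_n=\max\{-n,\min\{f,n\}\}\in B_b(E)$ (choosing a Borel representative). Then $|f-f_n|(x+y)\le|f(x+y)|$ and the difference tends to $0$ pointwise. Dominated convergence in the inner $L^p(\mu_1)$ integral for $\mu_2$-a.e. $y$ (the inner norm being finite a.e.), followed by dominated convergence in the outer integral with dominating function $\|f(\cdot+y)\|_{L^p(\mu_1)}^q\in L^1(\mu_2)$, gives $\|f-f_n\|_{p,q}\to0$.

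The main obstacle is the measure-theoretic bookkeeping: the strong measurability of $y\mapsto f(\cdot+y)$ into $L^p(\mu_1)$, and the fact that the mixed-norm limit agrees a.e. with the pointwise/in-measure limit. I would handle both through the $L^0$ metric identity and the embedding of the mixed space into $L^{\min\{p,q\}}(\mu_1\times\mu_2)$.
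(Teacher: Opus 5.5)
Your proposal is correct and follows essentially the same route as the paper: completeness via the identity $d_{\mu_1\times\mu_2}(F_n,F_m)=d_{\mu_1*\mu_2}(f_n,f_m)$ together with completeness of $L^0(E,\mu_1*\mu_2)$, density of $B_b(E)$ via truncation and dominated convergence, and the embeddings via Jensen's inequality. Beyond the paper, you also write out the case $q\le p$ explicitly and address the strong measurability of $y\mapsto f(\cdot+y)$ into $L^p(E,\mu_1)$, both of which the paper leaves implicit.
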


\begin{proof}
Since the map $\tau$ is a surjective isometry from $X^{p,q}(E;\mu_1,\mu_2)$ onto $\mathcal{X}_{p,q}(E;\mu_1,\mu_2)$, it follows that $\mathcal{X}_{p,q}(E;\mu_1,\mu_2)$ is a Banach space, as a consequence of the completeness of $X^{p,q}(E;\mu_1,\mu_2)$. Indeed let $(F_n)_{n\in\N}\subseteq X^{p,q}(E;\mu_1,\mu_2)$ be a Cauchy sequence, by the completeness of $L^{q}(E,\mu_2;L^p(E,\mu_1))$ it admits a limit $F$ belonging $L^{q}(E,\mu_2;L^p(E,\mu_1))$. We claim that the sequence $f_n=\tau(F_n)$ converges in probability to a $f\in L^0(E,\mu_1*\mu_2)$ such that $\tau F=f$. By the completeness of the metric $(L^0(E,\mu_1*\mu_2),d_{\mu_1*\mu_2})$ (see \cite[Theorem 245E]{Fre01}) it is enough to prove that $(f_n)_{n\in\N}$ is a Cauchy sequence. Observe that by the H\"older inequality 
\begin{align*}
    d_{\mu_1*\mu_2}(f_n,f_m) &=\int_E\min\{|f_n(z)-f_m(z)|,1\}d(\mu_1*\mu_2)(z)\\
    &= \int_E\int_E\min\{|f_n(x+y)-f_m(x+y)|,1\}d\mu_1(x)d\mu_2(y)\\
    &= \int_E\int_E\min\{|F_n(x,y)-F_m(x,y)|,1\}d\mu_1(x)d\mu_2(y)\\
    &\leq \int_E\int_E|F_n(x,y)-F_m(x,y)|d\mu_1(x)d\mu_2(y)\\
    &\leq \norm{F_n-F_m}_{L^{q}(E,\mu_2;L^p(E,\mu_1))}.
\end{align*}
So by the fact that $(F_n)_{n\in\N}$ is a Cauchy sequence in $L^{q}(E,\mu_2;L^p(E,\mu_1))$ it holds that there exists $f\in L^0(E,\mu_1*\mu_2)$ that is the limit in measure of the sequence $(f_n)_{n\in\N}\subseteq L^0(E,\mu_1*\mu_2)$. Now, setting $G(x,y):=f(x+y)$ for $(\mu_1\times \mu_2)$-almost every $(x,y)\in E\times E$. Observing that, by the H\"older inequality, $(F_n)$ converges to $F$ with respect to the $d_{\mu_1\times\mu_2}$ distance as $n \to +\infty$, and that by the continuity of the $d_{\mu_1\times \mu_2}(\cdot,\cdot)$ we get that
\begin{align*}
|d_{\mu_1\times \mu_2}(F_n,G)-d_{\mu_1\times \mu_2}(F,G)|\le d_{\mu_1\times \mu_2}(F_n,F), \qquad\;\, n \in \N
\end{align*}
it follows that $d_{\mu_1\times\mu_2}(F,G) =\lim_{n\ra\infty}d_{\mu_1\times\mu_2}(F_n,G)$.
Thus,
\begin{align*}
    d_{\mu_1\times\mu_2}(F,G) &=\lim_{n\ra\infty}\int_E\int_E\min\{|F_n(x,y)-G(x,y)|,1\}d\mu_1(x)d\mu_2(y)\\
    &=\lim_{n\ra\infty}\int_E\int_E\min\{|f_n(x+y)-f(x+y)|,1\}d\mu_1(x)d\mu_2(y)\\
    &=\lim_{n\ra\infty}\int_E\min\{|f_n(z)-f(z)|,1\}d(\mu_1*\mu_2)(z)= \lim_{n\ra\infty}d_{\mu_1*\mu_2}(f_n,f)=0.
\end{align*}
So $F(x,y)=G(x,y)=f(x+y)$ for $(\mu_1\times\mu_2)$-almost every $(x,y)\in E\times E$. This concludes the proof of the completeness of $\mathcal{X}_{p,q}(E;\mu_1,\mu_2)$.

To see that $B_b(E)$ is dense in $\mathcal{X}_{p,q}(E;\mu_1,\mu_2)$, let $f$ be a Borel version of an element of $\mathcal{X}_{p,q}(E;\mu_1,\mu_2)$. Then, for every $n\in\N$ the function
\begin{align*}
    f_n(z)=f(z)\chi_{\{|f|\leq n\}}(z),\qquad z\in E.
\end{align*}
belongs to $B_b(E)$. Moreover, being $|f-f_n|\leq |f|$ and $f(x+y)-f_n(x+y)\ra 0$ for $(\mu_1\times\mu_2)$-a.e. $(x,y)\in E\times E$, by the dominated convergence theorem it holds that $f_n$ converges to $f$ in $\mathcal{X}_{p,q}(E;\mu_1,\mu_2)$.

We prove \eqref{inclusion} only in the case $q>p$, since the case $q<p$ follows similarly and the case $q=p$ is obvious. The Jensen inequality implies that $L^q(E, \mu_1 * \mu_2)$ is continuously embedded in $\mathcal{X}_{p,q}(E;\mu_1,\mu_2)$. Indeed, since \( q > p \), we have
\begin{align*}
    \|f\|_{p,q} &= \left( \int_E \left( \int_E |f(x + y)|^p \, d\mu_1(x) \right)^{\frac{q}{p}} d\mu_2(y) \right)^{\frac{1}{q}} \\
    &\leq \left( \int_E \int_E |f(x + y)|^q \, d\mu_1(x) d\mu_2(y) \right)^{\frac{1}{q}} = \|f\|_{L^q(E, \mu_1 * \mu_2)}.
\end{align*}
Again, using Jensen's inequality and the assumption \( q > p \), we get
\begin{align*}
    \|f\|_{p,q} &= \left( \int_E \left( \int_E |f(x + y)|^p \, d\mu_1(x) \right)^{\frac{q}{p}} d\mu_2(y) \right)^{\frac{1}{q}} \\
    &\geq \left( \int_E \int_E |f(x + y)|^p \, d\mu_1(x) d\mu_2(y) \right)^{\frac{1}{p}} = \|f\|_{L^p(E, \mu_1 * \mu_2)}.
\end{align*}
This shows that \( \mathcal{X}_{p,q}(E;\mu_1,\mu_2)\) is continuously embedded in \(L^p(E, \mu_1 * \mu_2) \).
\end{proof}

We collect a few elementary properties of the spaces 
\( \mathcal{X}_{p,q}(E;\mu_1,\mu_2) \) in the next remark.

\begin{rmk}
The following facts are immediate consequences of the definition. Here $p,q$ belong to $[1,\infty)$.
\begin{enumerate}[(i)]
    \item \( \mathcal{X}_{p,p}(E;\mu_1,\mu_2)=L^p(E,\mu_1*\mu_2) \).
    
    \item If \( \mu \) is a probability measure on $E$, then 
    \( \mathcal{X}_{p,q}(E;\delta_0,\mu)=L^q(E,\mu) \) and \( \mathcal{X}_{p,q}(E;\mu,\delta_0)=L^p(E,\mu) \), where $\delta_0$ denotes the Dirac measure concentrated at the origin.

\end{enumerate}
\end{rmk}

In the sequel, it will be useful to consider the inverse of the map $\tau$ as done in \cite{AvN15} just in the case $L^2(E,\mu_1*\mu_2)$ (i.e. $p=q=2$). 

\begin{pro}\label{pro_Fbullet}
   For $f\in B_b(E)$ we consider $F_{f}\in B_b(E\times E)$ defined as
    \begin{align*}
        F_f(x,y)=f(x+y),\qquad x,y\in E.
    \end{align*}
    For every $p,q\in[1,\infty)$, the map $F_\bullet:B_b(E)\ra B_b(E\times E)$ defined as $F_\bullet(f)=F_f$ extends uniquely to an isometry from $\mathcal{X}_{p,q}(E;\mu_1,\mu_2)$ to $X^{p,q}(E;\mu_1,\mu_2)$. Moreover $F_f(x,y)=f(x+y)$ for $(\mu_1\times\mu_2)$-a.e. $(x,y)\in E\times E$ for any version of $f\in \mathcal{X}_{p,q}(E;\mu_1,\mu_2)$ and 
    \begin{align}\label{Koko}
        \tau\circ F_\bullet=\mathrm{Id}_{\mathcal{X}_{p,q}(E;\mu_1,\mu_2)},\qquad \qquad F_\bullet\circ\tau=\mathrm{Id}_{X^{p,q}(E;\mu_1,\mu_2)}.
    \end{align}
\end{pro}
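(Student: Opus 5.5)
The plan is to check first that $F_\bullet$ is an isometry on the dense subspace $B_b(E)$, then extend it by density and completeness, and finally identify the extension with $\tau^{-1}$.

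For $f\in B_b(E)$, the function $F_f(x,y)=f(x+y)$ is a bounded Borel function on $E\times E$. It therefore belongs to $L^q(E,\mu_2;L^p(E,\mu_1))$, and $\tau(F_f)=f$ by the very definition of $\tau$. In particular $F_f\in X^{p,q}(E;\mu_1,\mu_2)$. The definition of $\norm{\cdot}_{p,q}$ gives
\[
\norm{F_f}_{L^q(E,\mu_2;L^p(E,\mu_1))}=\norm{f}_{p,q},
\]
so $F_\bullet$ is a linear isometry from $B_b(E)$, equipped with $\norm{\cdot}_{p,q}$, into $X^{p,q}(E;\mu_1,\mu_2)$. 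Note that $F_f$ depends only on the $(\mu_1*\mu_2)$-class of $f$: the computation with $d_{\mu_1\times\mu_2}$ done right after \eqref{mixed_Xpq} shows that $\mu_1*\mu_2$-equivalent functions give $\mu_1\times\mu_2$-equivalent $F$'s.

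For the extension, $B_b(E)$ is dense in $\mathcal{X}_{p,q}(E;\mu_1,\mu_2)$ by Proposition \ref{pro_spacesXpq}. The target $X^{p,q}(E;\mu_1,\mu_2)$ is complete, being isometric via $\tau$ to the Banach space $\mathcal{X}_{p,q}$, equivalently a closed subspace of the Bochner space. Hence $F_\bullet$ extends uniquely to a linear isometry on all of $\mathcal{X}_{p,q}(E;\mu_1,\mu_2)$.

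It remains to identify the extension. Take $f\in\mathcal{X}_{p,q}$ with a Borel version, and choose $f_n\in B_b(E)$ with $f_n\to f$ in $\norm{\cdot}_{p,q}$; the truncations $f\chi_{\{|f|\le n\}}$ used in Proposition \ref{pro_spacesXpq} will do. Then $F_{f_n}\to F_\bullet(f)$ in the mixed-norm space, and hence in $d_{\mu_1\times\mu_2}$, by the H\"older estimate from the proof of Proposition \ref{pro_spacesXpq}. On the other hand,
\[
d_{\mu_1\times\mu_2}\bigl(F_{f_n},f(\cdot+\cdot)\bigr)=d_{\mu_1*\mu_2}(f_n,f)\le\norm{f_n-f}_{L^1(E,\mu_1*\mu_2)}\le\norm{f_n-f}_{p,q}\to0,
\]
using the embedding into $L^{\min\{p,q\}}\subseteq L^1$. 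By uniqueness of limits in measure, $F_\bullet(f)(x,y)=f(x+y)$ for $(\mu_1\times\mu_2)$-a.e. $(x,y)$. Since any two versions of $f$ agree $\mu_1*\mu_2$-a.e., the same identity holds for every version.

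This identity says exactly that $\tau(F_\bullet f)=f$, which is the first relation in \eqref{Koko}. For the second, let $F\in X^{p,q}$. By definition $F(x,y)=\tau(F)(x+y)$ a.e., and we have just shown that $F_\bullet(\tau F)(x,y)=\tau(F)(x+y)$ a.e.; hence $F_\bullet\circ\tau=\mathrm{Id}$. Alternatively, this follows because $\tau$ is injective and $\tau\circ F_\bullet\circ\tau=\tau$.

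The main delicate point is the identification of the abstract extension with the pointwise formula $f(x+y)$. This requires passing from norm convergence to convergence in measure on both sides and relying on the well-definedness argument following \eqref{mixed_Xpq}. The remaining steps are routine.
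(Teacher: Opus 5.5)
Your proof is correct and follows essentially the same route as the paper. You show $F_\bullet$ is an isometry on $B_b(E)$, extend it by density into the complete space $X^{p,q}(E;\mu_1,\mu_2)$, identify the limit with $f(x+y)$ almost everywhere, and then read off \eqref{Koko}. The only difference is in the identification step: you pass through convergence in measure via $d_{\mu_1\times\mu_2}$ and $d_{\mu_1*\mu_2}$, whereas the paper uses the triangle inequality in $L^q(E,\mu_2;L^p(E,\mu_1))$ together with $\norm{F_f-F_{f_n}}=\norm{f-f_n}_{p,q}$, which holds by the very definition of $\norm{\cdot}_{p,q}$.
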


\begin{proof}
    The fact that $F_\bullet$ is well defined is a consequence of the continuity of the sum. Now let $f\in \mathcal{X}_{p,q}(E;\mu_1,\mu_2)$ and let $(f_n)_{n\in\N}\subseteq B_b(E)$ converging to $f$ in $\mathcal{X}_{p,q}(E;\mu_1,\mu_2)$ (such a sequence exists by Proposition \ref{pro_spacesXpq}). Observe that for every $n,m\in\N$ it holds
    \begin{align*}
        \|F_{f_n}-F_{f_m}\|^q_{L^q(E,\mu_2;L^p(E,\mu_1))} &=\int_E\left(\int_E|F_{f_n}(x,y)-F_{f_m}(x,y)|^p d\mu_1(x)\right)^{\frac{q}{p}}d\mu_2(y)\\
        &=\int_E\left(\int_E|f_n(x+y)-f_m(x+y)|^p d\mu_1(x)\right)^{\frac{q}{p}}d\mu_2(y)\\
        &=\|f_n-f_m\|^q_{p,q}.
    \end{align*}
    So, since $X^{p,q}(E;\mu_1,\mu_2)$ is complete (proof of Proposition \ref{pro_spacesXpq}), the sequence $(F_{f_n})_{n\in\N}$ converges to an element $G\in X^{p,q}(E;\mu_1,\mu_2)$. Then, set $F_f(x,y)=f(x+y)$, observe that 
    \begin{align*}
        \|F_f-G\|_{L^q(E,\mu_2;L^p(E,\mu_1))}&\leq \|F_f-F_{f_n}\|_{L^q(E,\mu_2;L^p(E,\mu_1))}+\|F_{f_n}-G\|_{L^q(E,\mu_2;L^p(E,\mu_1))}\\
        &= \|f-f_n\|_{p,q}+\|F_{f_n}-G\|_{L^q(E,\mu_2;L^p(E,\mu_1))}.
    \end{align*}
Thus, since the right-hand side of the previous chain of inequalities vanishes as $n \to +\infty$, we get $G(x,y)=F_f(x,y)=f(x+y)$ for $(\mu_1\times\mu_2)$-a.e. $(x,y)\in E\times E$. Finally, the facts that $F_\bullet$ is an isometry and \eqref{Koko} follow by standard arguments.
\end{proof}

\begin{rmk}\textrm{As immediate consequence of Proposition \ref{pro_Fbullet} we get that $f \in \mathcal{X}_{p,q}(E;\mu_1, \mu_2)$ if and only if $F_f \in L^q(E, \mu_2;L^p(E, \mu_1))$ and $\|f\|_{p,q}=\|F_f\|_{L^q(E, \mu_2;L^p(E, \mu_1))}$.
In particular for any $r\in [1,\infty)$, by Proposition \ref{prop1}, it holds 
\begin{align}\label{formula_in_proof}
\|g\|_{L^r(E,\mu_1*\mu_2)}=\|F_g\|_{L^r(E\times E,\mu_1\times\mu_2)}.    
\end{align}}
\end{rmk}

In view of the results we are interested in, it is relevant to note that both inclusions in \eqref{inclusion} can be strict. To show this fact, we perform two one-dimensional examples. However, we point out that analogous constructions can be carried out in $\mathbb{R}^d$ for any $d > 1$.
In the first example,  we consider two probability measures \( \mu_1 \) and \( \mu_2 \) on \( \mathbb{R} \), which are also of interest from an applied perspective. Indeed, in \cite[Example 3.2]{RW03}, the authors show that the generalized Mehler semigroup, whose invariant measure is \( \mu_1 * \mu_2 \), is not hypercontractive in the classical sense. In Section \ref{sect_MainResults}, we will show that  a form of hypercontractivity still holds true in this case (see Theorem \ref{thm_hyper}).

\begin{example}\label{ex_Wang1}
 Let \( E = \mathbb{R} \), and consider the probability measures
\[
d\mu_1(x) = \frac{1}{\sqrt{2\pi}} e^{-\frac{x^2}{2}} \, dx, \qquad d\mu_2(y) = \frac{c_\alpha}{1 + |y|^\alpha} \, dy,
\]
where \( \alpha \in (1,3) \) and \( c_\alpha \) is a suitable normalization constant. To fix the ideas, let assume that $q>p$.

First, we show that the function \( f(\xi) = |\xi|^{-\beta} \), with \( \frac{1}{q} \le \beta < \frac{1}{p} \), belongs to \( \mathcal{X}_{p,q}(\R;\mu_1,\mu_2) \) but not to \( L^q(\mathbb{R}, \mu_1 * \mu_2) \). Indeed,
\begin{align*}
\|f\|_{L^q(\mathbb{R}, \mu_1 * \mu_2)}^q 
&= \int_{\mathbb{R}} \int_{\mathbb{R}} |f(x+y)|^q d\mu_1(x) d\mu_2(y) \\
&\ge \frac{c_\alpha}{\sqrt{2\pi}} \int_{-1}^{1} \int_{-1}^{1} \frac{1}{|x+y|^{\beta q}} e^{-\frac{x^2}{2}} \frac{1}{1 + |y|^\alpha}dxdy \\
&\ge \frac{c_\alpha}{2\sqrt{2e\pi}} \int_{-1}^{1} \int_{-1}^{1} \frac{1}{|x+y|^{\beta q}} dxdy,
\end{align*}
and, by our choice of $\beta$, the last integral diverges. On the other hand,
\begin{align*}
\|f\|_{p,q}^q &=\int_\mathbb{R}\left(\int_\mathbb{R}|f(x+y)|^pd\mu_1(x)\right)^{\frac{q}{p}}d\mu_2(y) \\
&= \int_{\mathbb{R}} \left( \int_{\{x \in \mathbb{R} \,|\, |x+y| \ge 1\}} \frac{1}{|x+y|^{\beta p}} d\mu_1(x) \right)^{\frac{q}{p}} d\mu_2(y) \\
&\quad + \int_{\mathbb{R}} \left( \int_{\{x \in \mathbb{R} \,|\, |x+y| < 1\}} \frac{1}{|x+y|^{\beta p}} d\mu_1(x) \right)^{\frac{q}{p}} d\mu_2(y) \\
&\le 1 + \int_{\mathbb{R}} \left( \int_{-1 - y}^{1 - y} \frac{1}{|x + y|^{\beta p}} dx \right)^{\frac{q}{p}} d\mu_2(y) \\
&= 1 + \int_{\mathbb{R}} \left( \int_{-1}^{1} \frac{1}{|t|^{\beta p}}  dt \right)^{\frac{q}{p}}  d\mu_2(y) < \infty,
\end{align*}
due to the choice of \( \beta \).

Next, we show that the function \( g(\xi) = |\xi|^\omega \), with \( \frac{\alpha - 1}{q} \le \omega < \frac{\alpha - 1}{p} \), belongs to \( L^p(\mathbb{R}, \mu_1 * \mu_2) \) but not to \( \mathcal{X}_{p,q}(\R;\mu_1,\mu_2) \). Indeed,
\begin{align}
\|g\|_{p,q}^q &= \frac{c_\alpha}{(2\pi)^{\frac{q}{2p}}} \int_{\mathbb{R}} \left( \int_\mathbb{R} |x+y|^{\omega p} e^{-\frac{x^2}{2}} \, dx \right)^{\frac{q}{p}} \frac{1}{1+|y|^\alpha} \, dy \notag \\
&\ge \frac{c_\alpha}{(2e\pi)^{\frac{q}{2p}}} \int_{-\infty}^{-2} \left( \int_0^1 |x+y|^{\omega p} \, dx \right)^{\frac{q}{p}} \frac{1}{1+|y|^\alpha} \, dy \notag \\
&= \frac{c_\alpha}{(2e\pi(\omega p+1)^2)^{\frac{q}{2p}}} \int_{1}^{\infty} \left( (1+s)^{\omega p+1} - s^{\omega p+1} \right)^{\frac{q}{p}} \frac{1}{1 + (1+s)^\alpha} \, ds. \label{example_conti}
\end{align}
Let
\(
r(s) := \left( (1+s)^{\omega p+1} - s^{\omega p+1} \right)^{\frac{q}{p}}\frac{1}{1 + (1+s)^\alpha},
\)
and observe that for large \( s \),
\begin{align} \label{r_asymp}
r(s) &= s^{(\omega p + 1)\frac{q}{p}} \left( \left( 1 + \frac{1}{s} \right)^{\omega p + 1} - 1 \right)^{\frac{q}{p}} \frac{1}{1 + (1+s)^\alpha} 
\sim \frac{(\omega p + 1)^{\frac{q}{p}}}{s^{\alpha - \omega q}}.
\end{align}
The asymptotic behaviour in \eqref{r_asymp}, combined with our assumption on \( \omega \), implies that the integral in \eqref{example_conti} diverges.
On the other hand,
\begin{align}
\|g\|_{L^p(\mathbb{R}, \mu_1 * \mu_2)}^p 
&= \frac{c_\alpha}{\sqrt{2\pi}} \int_{\mathbb{R}} \int_{\mathbb{R}} |x + y|^{\omega p} \frac{e^{-x^2/2}}{1 + |y|^\alpha} \, dx \, dy \notag \\
&\le \frac{c_\alpha \max\{1, 2^{\omega p-1}\}}{\sqrt{2\pi}} \int_{\mathbb{R}} \int_{\mathbb{R}} \left( |x|^{\omega p} + |y|^{\omega p} \right) \frac{e^{-x^2/2}}{1 + |y|^\alpha} \, dx \, dy \notag \\
&= \frac{\max\{1, 2^{\omega p-1}\}}{\sqrt{2\pi}} \int_{\mathbb{R}} |x|^{\omega p} e^{-x^2/2} \, dx 
+ c_\alpha \max\{1, 2^{\omega p-1}\} \int_{\mathbb{R}} \frac{|y|^{\omega p}}{1 + |y|^\alpha} \, dy. \label{example_conti2}
\end{align}
The first integral in \eqref{example_conti2} is finite since \( \omega > 0 \), and the second is also finite due to the condition \( \alpha - \omega p > 1 \), which holds by our choice of \( \omega \).

Therefore, both inclusions in \eqref{inclusion} are strict, i.e.,
\[
L^q(\mathbb{R}, \mu_1 * \mu_2) \subsetneq \mathcal{X}_{p,q}(\mathbb{R};\mu_1,\mu_2) \subsetneq L^p(\mathbb{R}, \mu_1 * \mu_2).
\]
\end{example}

The next example is taken from \cite[Example 3.3]{RW03}. We verify that also in this case, both inclusions in \eqref{inclusion} are strict. However, in this case, the authors in \cite{RW03} establish that a hypercontractivity-type estimate holds true for the associated generalised Mehler semigroup.

\begin{example}
   Let \( E = \mathbb{R} \) and consider the probability measures
    \[
    d\mu_1(x) = \frac{1}{\sqrt{2\pi}} e^{-\frac{x^2}{2}} \, dx, \qquad d\mu_2(y) = \frac{1}{2}e^{-|y|} \, dy.
    \]
As in the previous example, let assume that $q>p$, fix $\frac{1}{q}\leq \alpha<\frac{1}{p}$ and consider the function $f(\xi)=e^{\alpha|\xi|}$. We start by observing that, by our choice of $\alpha$, the function $f$ belongs to $L^p(\R,\mu_1*\mu_2)$. Indeed, by the triangular inequality, it holds
    \begin{align*}
        \norm{f}_{L^p(\R,\mu_1*\mu_2)}^p=\frac{1}{2\sqrt{2\pi}}\int_\R\int_\R e^{\alpha p|x+y|}e^{-\frac{x^2}{2}}e^{-|y|}dxydy\leq\frac{1}{2\sqrt{2\pi}}\left(\int_\R e^{\alpha p|x|-\frac{x^2}{2}}dx \right)\left(\int_\R e^{(\alpha p-1)|y|}dy \right),
    \end{align*}
    and the last term in the above chain of inequalities is finite by our choice of $\alpha$. We claim that $f$ does not belong to $\mathcal{X}_{p,q}(\R;\mu_1,\mu_2)$. Indeed 
    \begin{align*}
        \norm{f}_{p,q}^q &=\int_\R\left(\int_\R e^{\alpha p|x+y|}\frac{1}{\sqrt{2\pi}}e^{-\frac{x^2}{2}}dx\right)^{\frac{q}{p}}\frac{1}{2}e^{-|y|}dy\geq \frac{1}{2(2\pi)^{\frac{q}{2p}}}\int_\R\left(\int_0^1 e^{\alpha p|x+y|}e^{-\frac{x^2}{2}}dx\right)^{\frac{q}{p}}e^{-|y|}dy\\
        &\geq \frac{1}{2(2\pi e)^{\frac{q}{2p}}}\int_0^\infty\left(\int_0^1 e^{\alpha p|x+y|}dx\right)^{\frac{q}{p}}e^{-|y|}dy= \frac{1}{2(2\pi e)^{\frac{q}{2p}}}\int_0^\infty\left(\int_y^{1+y} e^{\alpha pz}dz\right)^{\frac{q}{p}}e^{-|y|}dy\\
        &= \frac{1}{2}\left(\frac{(e^{\alpha p}-1)^2}{2\pi e(\alpha p)^2}\right)^{\frac{q}{2p}}\int_0^\infty e^{(\alpha q-1)|y|}dy.
    \end{align*}
    The last integral is infinite by our choice of $\alpha$. So $\mathcal{X}_{p,q}(\R;\mu_1,\mu_2)\subsetneq L^p(\R,\mu_1*\mu_2)$. 
    
    To prove that $L^q(\R,\mu_1*\mu_2)\subsetneq \mathcal{X}_{p,q}(\R;\mu_1,\mu_2)$ consider the function $g(\xi)=\frac{1}{|\xi|^\beta}$ with $\frac{1}{q}\leq \beta<\frac{1}{p}$. Similar calculations as those in Example \ref{ex_Wang1} give our claim.
\end{example}

\section{Main results}\label{sect_MainResults}

In this section, we state and prove the main results of the paper. To this end, we work in a more general setting, namely that of skew operators. We begin by recalling the basic definitions and properties and we refer to \cite{AvN15} for a more comprehensive survey on this subject.

\begin{defn}\label{4.1}
Let $\mu_1$ and $\mu_2$ be two Radon probability measures on the Banach spaces $E_1$ and $E_2$, respectively, and suppose that $\hat{\mu}_2(x^*) \neq 0$ for all $x^* \in E_2^*$. A Borel measurable map $T : E_1 \to E_2$ is called a \emph{skew map} with respect to the pair $(\mu_1, \mu_2)$ if there exists a Radon probability measure $\rho$ on $E_2$ such that
\begin{equation} \label{skew}
(\mu_1 \circ T^{-1}) * \rho = \mu_2.
\end{equation}
In this case, we say that $\mu_2$ is the \emph{skew convolution product} (with respect to $T$) of $\mu_1$ and $\rho$. If $T$ is also a bounded linear operator from $E_1$ to $E_2$, then $T$ is called a \emph{skew operator} with respect to $(\mu_1, \mu_2)$. If $E_1 = E_2$ and $\mu_1 = \mu_2 = \mu$, we simply say that $T$ is a \emph{skew map} with respect to $\mu$. If $T$ is also linear and bounded, we call it a skew operator with respect to $\mu$.
\end{defn}

\begin{rmk}
We wish to list here some easy consequences and observations of Definition \ref{4.1}.
\begin{enumerate}[(i)]
    \item Condition $\hat{\mu}_2(x^*) \neq 0$ holds, for instance, when $\mu_2$ is infinitely divisible (see \cite[Proposition 5.1.1]{Lin86}).
    
    \item If the measure $\rho$, in \eqref{skew}, exists then it is unique and it is called skew convolution factor associated with $T$ and the pair $(\mu_1, \mu_2)$.
    
    \item If $E_1 = E_2 = E$ and $T = {\rm Id}_E$, then \eqref{skew} is satisfied with $\rho = \delta_0$, the Dirac measure concentrated at the origin.
\end{enumerate}
\end{rmk}

\begin{thm} \label{Lp}
Let $E$ be a separable Banach space and $\mu$ a Radon measure on $\mathcal{B}(E)$ such that $\hat{\mu}(x^*) \neq 0$ for all $x^* \in E^*$. Let $T : E \to E$ be a skew map with respect to $\mu$ and let $\rho$ be the associated skew convolution factor. Then the linear operator $P_T : B_b(E) \to B_b(E)$ defined by
\begin{equation}\label{meh_sem}
P_T f(x) := \int_E f(T(x) + y) \, d\rho(y),
\end{equation}
extends uniquely to a linear contraction $P_T : L^p(E, \mu) \to L^p(E, \mu)$ for every $p\in [1,\infty)$.
\end{thm}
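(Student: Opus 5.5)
The plan is to prove the estimate on $B_b(E)$ via Jensen's inequality and the skew convolution identity \eqref{skew}, and then extend by density.

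\emph{Step 1: $P_T$ is well defined on $B_b(E)$.} Fix $f\in B_b(E)$. The map $(x,y)\mapsto f(T(x)+y)$ is Borel on $E\times E$, since $T$ is Borel and the sum is continuous. By Fubini, $P_Tf$ is Borel, and clearly $\|P_Tf\|_\infty\le\|f\|_\infty$. Hence $P_T$ maps $B_b(E)$ into itself and is linear.

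\emph{Step 2: the $L^p$ estimate.} Let $p\in[1,\infty)$ and $f\in B_b(E)$. Since $\rho$ is a probability measure, Jensen's inequality gives
\begin{align*}
|P_Tf(x)|^p\le \int_E |f(T(x)+y)|^p\,d\rho(y),\qquad x\in E.
\end{align*}
Integrate in $x$ against $\mu$ and use Fubini. Then use the change of variables formula for the image measure $\mu\circ T^{-1}$, followed by the definition of convolution with $g=|f|^p\in B_b(E)$:
\begin{align*}
\int_E|P_Tf|^p\,d\mu\le\int_E\int_E |f(z+y)|^p\,d(\mu\circ T^{-1})(z)\,d\rho(y)=\int_E|f|^p\,d\big((\mu\circ T^{-1})*\rho\big)=\int_E|f|^p\,d\mu.
\end{align*}
The last equality is exactly \eqref{skew} with $\mu_1=\mu_2=\mu$. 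This gives $\|P_Tf\|_{L^p(E,\mu)}\le\|f\|_{L^p(E,\mu)}$.

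\emph{Step 3: extension to $L^p(E,\mu)$.} The estimate of Step 2 shows that $P_T$ respects $\mu$-a.e.\ equivalence classes: if $f=g$ $\mu$-a.e., then $\|P_Tf-P_Tg\|_{L^p}\le\|f-g\|_{L^p}=0$. So $P_T$ is a well-defined contraction on the image of $B_b(E)$ in $L^p(E,\mu)$.

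This image is dense for $p<\infty$. Indeed, for $f\in L^p(E,\mu)$ the truncations $f\chi_{\{|f|\le n\}}$ converge to $f$ in $L^p$ by dominated convergence, as in the proof of Proposition \ref{pro_spacesXpq}. The bounded linear extension theorem then gives a unique linear contraction on $L^p(E,\mu)$ extending $P_T$.

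The only slightly delicate point is Step 2, namely justifying the measurability and Fubini manipulations, including the identity $\int g\circ T\,d\mu=\int g\,d(\mu\circ T^{-1})$ for Borel $g$. The Radon and separability hypotheses ensure that all the measures involved are well-behaved Borel measures. The hypothesis $\hat\mu\neq0$ plays no role beyond guaranteeing, via Definition \ref{4.1}, that $\rho$ exists and is unique.
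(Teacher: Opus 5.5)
Your proof is correct and is essentially the paper's argument: Jensen's inequality for $f\in B_b(E)$, the change of variables for the image measure $\mu\circ T^{-1}$, and the skew identity $(\mu\circ T^{-1})*\rho=\mu$. You also make explicit the measurability, the compatibility with $\mu$-a.e.\ classes and the density extension, which the paper leaves implicit; one small inaccuracy is in your closing remark, since $\hat\mu\neq 0$ gives uniqueness of $\rho$, while its existence is part of the assumption that $T$ is skew.
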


\begin{proof}
Fix $p \in [1, \infty)$ and $f \in B_b(E)$. By Jensen's inequality,
\begin{align*}
\|P_T f\|_{L^p(E, \mu)}^p &= \int_E |P_T f(x)|^p d\mu(x)
\leq \int_E \int_E |f(T(x) + y)|^p d\rho(y) d\mu(x) \\
&= \int_E \int_E |f(w + y)|^p d\rho(y) d(\mu\circ T^{-1})(w) \\
&= \int_E |f(z)|^p \, d((\mu\circ T^{-1})* \rho)(z)
= \int_E |f(z)|^p \, d\mu(z)
= \|f\|_{L^p(E, \mu)}^p,
\end{align*}
which proves the claim.
\end{proof}

In order to proceed we need to recall some basic properties and definitions about Gaussian and infinitely divisible non-Gaussian Radon measures.

\subsection{Gaussian measure}

If $\gamma=\mathcal{N}(0,Q)$ then we recall that the covariance operator $Q:E^*\ra E$ of $\gamma$ satisfies 
\begin{align}\label{defn_covarianza}
    Qx^* = \int_E \langle x,x^*\rangle_{E\times E^*} x\, d\gamma(x), \qquad x^* \in E^*.
\end{align}
This integral is known to be absolutely convergent in $E$ and $Q$, defined in \eqref{defn_covarianza}, is a bounded and linear operator which is symmetric and non-negative. Namely $\langle Q x^*, y^*\rangle_{E\times E^*}=\langle Q y^*, x^*\rangle_{E\times E^*}$, for every $x^*,y^*\in E^*$, and $\langle Q x^*, x^*\rangle_{E\times E^*} \geq0$, for every $x^*\in E^*$. 
We define the Cameron--Martin space $H$ of $\gamma$ in $E$ as
the completion of the range of $Q$ with respect to the inner product defined by
\begin{align*}
\langle Qx^*,Qy^*\rangle_H:=\langle Qx^*,y^*\rangle_{E\times E^*}, \qquad\;\, x^*, y^* \in E^*.
\end{align*} 

For an in depth discussion about Gaussian measures a
their associated Cameron--Martin space, we refer to \cite{Bog98,DaPZ92,Kuo75}.



We recall that if $T\in\mathcal{L}(E)$ is a skew operator with respect to $\gamma$, then the skew convolution factor $\rho_\gamma$ is a Gaussian measure as well, and the operator $P_T$, introduced in \eqref{meh_sem}, is extensively studied in the literature, in particular in relation with the second quantization operator (see, for example, \cite{ADF25,AvN14,CMG96}).

In what follows we need the following results concerning skew maps with respect to Gaussian measures (see \cite[Propositions 3.1 and 3.2]{AvN15}) and hypercontractivity property of $P_T$ proved by Nelson in \cite[Lemma 2]{Nel73}.

\begin{pro}\label{skew_gamma}
Let $\gamma$ be a centered Gaussian measure on a separable Banach space $E$, and let $H$ be its Cameron--Martin space.
\begin{enumerate}[(i)]
\item If $T \in \mathcal{L}(E)$ is a skew operator with respect to $\gamma$, then the restriction $T_{|_{H}}:H\ra H$ of $T$ to $H$ is a contraction;
\item  If $T \in \mathcal{L}(H)$ is a contraction, then $T$ admits a linear Borel measurable extension $\overline{T} : E \to E$ such that the image measure $\gamma \circ \overline{T}^{-1}$ is Gaussian and there exists a Gaussian Radon measure $\rho$ on $E$ such that $(\gamma \circ \overline{T}^{-1}) * \rho = \gamma.$
In particular, $\overline{T}$ is a linear skew map with respect to $\gamma$.
\end{enumerate}
\end{pro}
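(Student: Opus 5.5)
For part (i) I would work at the level of characteristic functions. Suppose $(\gamma\circ T^{-1})*\rho=\gamma$ and write $Q$ for the covariance of $\gamma$. Since $\widehat{\gamma\circ T^{-1}}(x^*)=\hat\gamma(T^*x^*)=\exp(-\tfrac12\langle QT^*x^*,T^*x^*\rangle)$ never vanishes, we can divide and get
\[
\hat\rho(x^*)=\exp\Bigl(-\tfrac12\bigl(\langle Qx^*,x^*\rangle-\langle TQT^*x^*,x^*\rangle\bigr)\Bigr),\qquad x^*\in E^*.
\]
This shows that $\rho$ is a centred Gaussian measure. Moreover $|\hat\rho|\le 1$ forces the form inequality
\[
\langle QT^*y^*,T^*y^*\rangle\le\langle Qy^*,y^*\rangle,\qquad y^*\in E^*.
\]
Next I would use the variational description $\|h\|_H=\sup\{\langle h,y^*\rangle : \langle Qy^*,y^*\rangle\le1\}$, valid for $h\in E$, with $h\in H$ if and only if this supremum is finite. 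For $h=Qx^*$ it gives
\[
\langle TQx^*,y^*\rangle=\langle QT^*y^*,x^*\rangle=\langle Qx^*,T^*y^*\rangle_{H\text{-pairing}}\le\|Qx^*\|_H\,\langle QT^*y^*,T^*y^*\rangle^{1/2}\le \|Qx^*\|_H\,\langle Qy^*,y^*\rangle^{1/2}.
\]
Hence $TQx^*\in H$ and $\|TQx^*\|_H\le\|Qx^*\|_H$. Since $T$ is continuous on $E$, $H\hookrightarrow E$ continuously, and $Q(E^*)$ is dense in $H$, this bound extends to all of $H$. Therefore $T_{|H}$ maps $H$ into $H$ and is a contraction.

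For part (ii) I would fix an orthonormal basis $h_k=Qx_k^*$ of $H$ with $x_k^*\in E^*$. The functions $\xi_k(x)=\langle x,x_k^*\rangle$ are then i.i.d. $\mathcal N(0,1)$ under $\gamma$, and $\sum_k\xi_k h_k=x$ $\gamma$-a.s. The natural candidate extension is
\[
\overline{T}x:=\lim_n\sum_{k\le n}\langle x,x_k^*\rangle\,Th_k,
\]
taken where the limit exists in $E$ and set equal to $0$ elsewhere. This map is Borel and linear on a Borel linear subspace. The main obstacle is to show that this series converges $\gamma$-a.s. in $E$.

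To handle convergence I would set $S:=(I-TT^*)^{1/2}\in\mathcal L(H)$, which makes sense because $TT^*\le I$, and introduce an independent i.i.d. standard Gaussian sequence $(\xi_k')$. The operator $W:H\oplus H\to H$, $W(a,b)=Ta+Sb$, satisfies $WW^*=I$. Hence the $E$-valued Gaussian series
\[
\sum_k\bigl(\xi_k\,Th_k+\xi_k'\,Sh_k\bigr)
\]
has exactly the finite-dimensional distributions of an expansion of $\gamma$ along an orthonormal system whose image spans $H$. By the Itô--Nisio theorem this series converges a.s. in $E$, with law $\gamma$. The two pieces are independent and symmetric, so the standard Lévy/Itô--Nisio argument (taking conditional expectation with respect to $(\xi_k)$, which is a contraction) gives a.s. convergence of each piece separately.

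With convergence in hand, $\gamma\circ\overline{T}^{-1}$ is the law of $\sum\xi_kTh_k$, which is Gaussian with characteristic function $\exp(-\tfrac12\|T^*Qx^*\|_H^2)$. I would then define $\rho$ as the law of $\sum\xi'_kSh_k$. This is a Gaussian measure, and it is Radon because $E$ is separable. Its characteristic function is $\exp(-\tfrac12\|SQx^*\|_H^2)$. The product of the two characteristic functions is $\exp(-\tfrac12\|Qx^*\|_H^2)=\hat\gamma(x^*)$, because $TT^*+S^2=I$. Uniqueness of measures with a given characteristic function then yields $(\gamma\circ\overline{T}^{-1})*\rho=\gamma$, and so $\overline T$ is a linear skew map with respect to $\gamma$.
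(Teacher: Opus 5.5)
Your proof is correct. The paper does not prove this proposition itself; it quotes it from \cite[Propositions 3.1 and 3.2]{AvN15}, so the comparison is with the standard argument behind that result.

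\textbf{Part (i).} This is the standard argument. Dividing characteristic functions shows that $\rho$ is a centred Gaussian measure with covariance $Q-TQT^*$, which gives $\langle QT^*y^*,T^*y^*\rangle\le\langle Qy^*,y^*\rangle$. Cauchy--Schwarz in $H$, together with the variational description of $\|\cdot\|_H$ (equivalently, a Douglas-type range inclusion), turns this form inequality into $T(H)\subseteq H$ with $\|T_{|_H}\|_{\mathcal{L}(H)}\le 1$.

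\textbf{Part (ii).} The usual route invokes the right ideal property of $\gamma$-radonifying operators. The embedding $\iota:H\hookrightarrow E$ is $\gamma$-radonifying, hence so are $\iota T$ and $\iota(I-TT^*)^{1/2}$. This gives at once the a.s. convergence of $\sum_k\xi_kTh_k$ and a Radon Gaussian $\rho$ with covariance $\iota(I-TT^*)\iota^*$. You instead use the co-isometric dilation $W(a,b)=Ta+(I-TT^*)^{1/2}b$, It\^o--Nisio, and a splitting of the combined series. This is a self-contained proof of exactly the special case of the ideal property needed here. The price is the extra splitting step.

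\textbf{Points to make explicit.}

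First, you never check that $\overline T$ actually extends $T$. For $h\in H$ one has $\langle h,x_k^*\rangle_{E\times E^*}=\langle h,h_k\rangle_H$. So the partial sums are $T$ applied to the $H$-convergent expansion of $h$. Hence $h$ lies in the convergence subspace and $\overline Th=Th$.

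Second, \emph{linear} must be read as in your construction, namely linear on a Borel linear subspace of full measure. A Borel map that is linear on all of the separable space $E$ is automatically continuous (Banach--Pettis). A contraction of $H$, however, need not extend continuously to $E$.

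Third, the splitting needs no integrability. The sequences $(\xi_k,\xi_k')_k$ and $(\xi_k,-\xi_k')_k$ have the same law, so $\sum_k\bigl(\xi_kTh_k-\xi_k'(I-TT^*)^{1/2}h_k\bigr)$ also converges a.s. Half the sum of the two series is $\sum_k\xi_kTh_k$, which therefore converges a.s. Your conditional-expectation route also works. It does, however, require convergence of the Gaussian series in $L^1(\Omega;E)$ (via Fernique or Kahane--Khintchine), which you left implicit.
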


\begin{thm}\label{thm_hyperclassical}
    Let $\gamma$ be a Gaussian measure on a separable Banach space $E$, and let $H$ be its Cameron--Martin space. If $T\in\mathcal{L}(E)$ is a skew operator with respect to $\gamma$, then the operator $P_T$, defined in \eqref{meh_sem} satisfies
    \begin{align*}
        \|P_Tf\|_{L^q(E,\gamma)}\leq\|f\|_{L^{p}(E,\gamma)},
    \end{align*}
    for every $p\in[1,\infty)$ and any $q\in [p,1+(p-1)\|T_{|_H}\|^{-2}_{\mathcal{L}(H)}]$.
\end{thm}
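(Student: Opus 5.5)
We will prove Theorem \ref{thm_hyperclassical} by reducing it to the abstract Gaussian hypercontractivity of Nelson for contractions on the Cameron--Martin space. The plan is to realize $P_T$ as a conditional expectation composed with a measure-preserving rotation on the product space $(E\times E,\gamma\times\gamma)$, and to use the Gaussian structure of $\rho$.

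\textbf{Step 1: the operator $T^*$.} Set $\tau:=\|T_{|_H}\|_{\mathcal{L}(H)}$; by Proposition \ref{skew_gamma}(i), $\tau\le 1$. If $\tau=0$, then $P_Tf$ is constant, equal to $\int f\,d\rho=\int f\,d\gamma$, and the claim follows from Jensen. Otherwise write $T_{|_H}=\tau U$ with $\|U\|_{\mathcal{L}(H)}\le1$. We use the Ornstein--Uhlenbeck decomposition: $\gamma\circ(T)^{-1}*\rho=\gamma$ forces the covariance of $\rho$ to be that of the Gaussian measure with Cameron--Martin covariance $\mathrm{Id}_H-T_{|_H}T_{|_H}^*$. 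Identify $L^2(E,\gamma)$ with the Fock space over $H$ via the Wiener chaos decomposition. Then $P_T$ acts as the second quantization $\Gamma(T_{|_H}^*)$ (or $\Gamma(T_{|_H})$, depending on the duality convention). This identification can be checked on exponential functions $x\mapsto \exp(\hat h(x)-\tfrac12|h|_H^2)$, $h\in H$. A direct computation with the characteristic functions gives
\[
P_T\, \mathcal{E}(h)=\mathcal{E}(T_{|_H}^*h).
\]
The variance terms cancel exactly because of the covariance identity above.

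\textbf{Step 2: factorization.} Next factor $\Gamma(T_{|_H}^*)=\Gamma(\tau)\Gamma(U^*)$, where $\Gamma(\tau)=R(t)$ is the classical Ornstein--Uhlenbeck semigroup \eqref{OU_Classical} with $e^{-t}=\tau$. Since $U^*$ is a contraction on $H$, Proposition \ref{skew_gamma}(ii) provides a linear measurable skew extension, so $\Gamma(U^*)$ is of the form $P_{\overline{U^*}}$. By Theorem \ref{Lp} it is then a contraction on every $L^p(E,\gamma)$, $p\ge1$; this uses only Jensen and the invariance of $\gamma$.

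\textbf{Step 3: Nelson's estimate.} It remains to invoke Nelson's classical hypercontractivity, $\|R(t)g\|_{L^q}\le\|g\|_{L^p}$ for $q\le 1+(p-1)e^{2t}=1+(p-1)\tau^{-2}$. If one wants it self-contained, it follows from the Gross logarithmic Sobolev inequality for $\gamma$ through the usual differentiation of $t\mapsto\|R(t)f\|_{q(t)}$; the Dini-derivative calculus of Proposition \ref{Proposition_Dini_derivatives} handles the lack of smoothness. Composing Steps 2 and 3 gives $\|P_Tf\|_{q}\le\|\Gamma(U^*)f\|_p\le\|f\|_p$. The statement is first obtained on $B_b(E)$, or on exponentials, and then extended to all of $L^p$ by density.

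\textbf{Main obstacle.} The delicate point is Step 1: justifying rigorously in a general separable Banach space that $P_T=\Gamma(T_{|_H}^*)$. This requires identifying the covariance of $\rho$ from $\hat\gamma(x^*)=\hat\gamma(T^*x^*)\hat\rho(x^*)$. It also requires knowing that exponentials are dense in every $L^p(E,\gamma)$ and that both sides are bounded there. For $p<2$ the identification must be transferred from $L^2$ by density, using the $L^p$-contractivity from Theorem \ref{Lp}.
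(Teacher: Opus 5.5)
The paper does not prove this statement itself; it invokes Nelson's hypercontractivity of second quantization \cite[Lemma 2]{Nel73}, together with the known link between $P_T$ and second quantization recalled just before it. Your proof is correct and is essentially the standard proof of that cited result: identify $P_T=\Gamma((T_{|_H})^*)$ on exponentials, factor $\Gamma(\tau)\Gamma(U^*)$ with $\Gamma(\tau)$ the Ornstein--Uhlenbeck operator and $\Gamma(U^*)$ an $L^p$-contraction, then apply Nelson's Ornstein--Uhlenbeck estimate and extend by density.

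One harmless slip: by your own Step 1 computation $P_{\overline{V}}=\Gamma(V^*)$, so $\Gamma(U^*)$ is $P_{\overline{U}}$, not $P_{\overline{U^*}}$. This does not affect the argument, since $U$ and $U^*$ are both contractions.
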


Note that, in the assumptions of Theorem \ref{thm_hyperclassical}, Proposition \ref{skew_gamma}(i) yields that $\|T_{|_H}\|_{\mathcal{L}(H)}\le 1$, whence $1+(p-1)\|T_{|_H}\|^{-2}_{\mathcal{L}(H)}\ge p$. Thus the interval $[p,1+(p-1)\|T_{|_H}\|^{-2}_{\mathcal{L}(H)}]$ is non-empty. 

\subsection{Hypercontractivity type estimates}

Now, we address our attention to the case when the operator $T:E\to E$ in \eqref{meh_sem}
is a skew operator with respect to an infinitely divisible Radon measure $\mu$ that, as is well known,
admits a unique representation as the convolution
\[
\mu= \delta_\xi * \gamma * \tilde{e}_s(\nu),
\]
where $\delta_\xi$ is the Dirac measure concentrated at the point $\xi\in E$, $\gamma$ is a centered Gaussian Radon measure on $E$ and $\tilde{e}_s(\nu)$ is the generalized exponential factor associated to a Radon L\'evy measure $\nu$ on $E$. (see \cite[Theorem 3.4.20]{Hey}).
Recall that a Lévy measure on $E$ is Radon measure such that
\[
\nu(\{0\})=0, \qquad \int_E \min\{1,\|x\|^2\}d\nu(x) < \infty,
\]
and the generalized exponential factor associated to $\nu$ is the measure $\tilde{e}_s(\nu)$ satifying
\begin{align*}
    \widehat{\tilde{e}_s(\nu)}(x^*)=\mathrm{exp}\left(\int_E (e^{i x^*(x)}-1)d\nu(x)\right).
\end{align*}


Our standing assumptions for this section are the following.

\begin{hyp}\label{hyp:skew-id}
Let $E$ be a separable Banach space and
\begin{enumerate}[(i)]
\item $\gamma$ is a centered Gaussian Radon measure on $E$;
\item $\nu$ is a Radon L\'evy measure on $E$ and
\begin{align*}
    \pi=\delta_\xi * \tilde{e}_s(\nu),\qquad\;\, \xi \in E
\end{align*}
(with the same notations as before);
\item $T\in\mathcal{L}(E)$ is a skew operator with respect to $\gamma*\pi$ with skew factor $\rho$.
\end{enumerate}
\end{hyp}


To begin with, we recall the following result, which is proved for instance in \cite[Proposition~2.1]{AvN15}.
\begin{pro}\label{krelly}
The following assertions are equivalent.
\begin{enumerate}[(i)]
\item $T$ is a skew map with respect to $\gamma*\pi$ with an infinitely divisible skew factor $\rho$.
\item $T$ is a skew map with respect to $\gamma$ and $\pi$ with infinitely divisible skew factors $\rho_\gamma$ and $\rho_\pi$, respectively.
\end{enumerate}
If these equivalent conditions are satisfied, then $\rho=\rho_\gamma*\rho_\pi$.
\end{pro}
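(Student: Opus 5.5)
The plan is to argue through characteristic functions. By Lévy's uniqueness theorem for Radon measures on a separable Banach space, a Radon probability measure is determined by its characteristic function. I will also use the uniqueness of the Lévy--Khinchine decomposition of an infinitely divisible measure into a Dirac part, a centered Gaussian part and a generalized exponential factor (\cite[Theorem 3.4.20]{Hey}).

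For (ii)$\Rightarrow$(i), assume $(\gamma\circ T^{-1})*\rho_\gamma=\gamma$ and $(\pi\circ T^{-1})*\rho_\pi=\pi$ with $\rho_\gamma,\rho_\pi$ infinitely divisible. Since $T$ is linear, $(\gamma*\pi)\circ T^{-1}=(\gamma\circ T^{-1})*(\pi\circ T^{-1})$; at the level of characteristic functions this is just $\widehat{\gamma*\pi}(T^*x^*)=\hat\gamma(T^*x^*)\hat\pi(T^*x^*)$. Commutativity and associativity of convolution then give
\[
((\gamma*\pi)\circ T^{-1})*(\rho_\gamma*\rho_\pi)=\big((\gamma\circ T^{-1})*\rho_\gamma\big)*\big((\pi\circ T^{-1})*\rho_\pi\big)=\gamma*\pi .
\]
So $\rho:=\rho_\gamma*\rho_\pi$ is a skew factor, and it is infinitely divisible as a convolution of infinitely divisible measures. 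Uniqueness of the skew factor (which follows since $\widehat{\gamma*\pi}$ never vanishes) gives $\rho=\rho_\gamma*\rho_\pi$.

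For (i)$\Rightarrow$(ii), let $\rho$ be infinitely divisible with $((\gamma*\pi)\circ T^{-1})*\rho=\gamma*\pi$. I decompose $\rho=\delta_\eta*\rho_G*\tilde e_s(\nu_\rho)$ as Dirac $*$ centered Gaussian $*$ generalized exponential factor. The image of each type under the bounded linear map $T$ keeps its type:
\begin{itemize}
\item $\gamma\circ T^{-1}$ is centered Gaussian;
\item $\delta_\xi\circ T^{-1}=\delta_{T\xi}$;
\item $\tilde e_s(\nu)\circ T^{-1}$ is $\tilde e_s(\nu\circ T^{-1})$ up to a Dirac shift, where $\nu\circ T^{-1}$ is restricted to $E\smallsetminus\{0\}$, which is again a Lévy measure.
\end{itemize}
This last item is the delicate point, because the compensation in the generalized exponential factor depends on the truncation set, and $T$ moves the unit ball. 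I would handle it by working with the characteristic exponent and absorbing the discrepancy $\int(\langle Tx,x^*\rangle\chi_{B_1}(Tx)-\langle Tx,x^*\rangle\chi_{B_1}(x))\,d\nu$ into the drift. Justifying the convergence of this term in the Banach setting, where the generalized exponential factor is defined as a limit, is the main obstacle. I would try to appeal to the continuity of $\tilde e_s$ under image maps as in \cite{Hey}, or approximate $\nu$ by its restrictions to $\{\|x\|>\varepsilon\}$.

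Grouping by type, the left-hand side of the skew relation becomes
\[
\big[(\gamma\circ T^{-1})*\rho_G\big]*\big[\delta_{\cdot}*\tilde e_s(\nu\circ T^{-1}+\nu_\rho)\big].
\]
This is an infinitely divisible measure whose Gaussian part is $(\gamma\circ T^{-1})*\rho_G$ and whose remaining part is the bracket on the right. Comparing with $\gamma*\pi$ and invoking uniqueness of the Lévy--Khinchine decomposition gives $(\gamma\circ T^{-1})*\rho_G=\gamma$ and $(\pi\circ T^{-1})*(\delta_\eta*\tilde e_s(\nu_\rho))=\pi$, with the Dirac and drift parts matched by the same uniqueness. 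Setting $\rho_\gamma=\rho_G$ and $\rho_\pi=\delta_\eta*\tilde e_s(\nu_\rho)$, both infinitely divisible, yields (ii) together with $\rho=\rho_\gamma*\rho_\pi$.
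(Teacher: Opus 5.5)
The paper gives no proof of this proposition; it quotes it from \cite[Proposition~2.1]{AvN15}. Your characteristic-function argument is the standard route and is correct. It rests on L\'evy uniqueness, the non-vanishing of infinitely divisible characteristic functions, and the uniqueness of the decomposition $\delta_\xi*\gamma*\tilde{e}_s(\nu)$. The step you single out as the main obstacle, $\tilde{e}_s(\nu)\circ T^{-1}=\delta_c*\tilde{e}_s(\nu\circ T^{-1})$, is routine and needs no approximation or limit argument.

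The drift correction
\[
c:=\int_E Tx\,\big(\chi_{B_1}(Tx)-\chi_{B_1}(x)\big)\,d\nu(x)
\]
is an absolutely convergent Bochner integral. Its integrand vanishes unless exactly one of $\|x\|\le 1$, $\|Tx\|\le 1$ holds. If $\|Tx\|\le 1<\|x\|$, its norm is at most $1$. If $\|x\|\le 1<\|Tx\|$, then $\|x\|>\|T\|^{-1}$ and its norm is at most $\|T\|$. So the integrand is bounded and supported in $\{\|x\|>\min\{1,\|T\|^{-1}\}\}$, which has finite $\nu$-measure for any L\'evy measure.

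Split the exponent of $\widehat{\tilde{e}_s(\nu)}(T^*x^*)$ accordingly and change variables; points with $Tx=0$ contribute nothing. This shows that $\delta_{-c}*(\tilde{e}_s(\nu)\circ T^{-1})$ is a Radon probability measure whose characteristic function is the compensated L\'evy--Khinchine exponential of the restriction of $\nu\circ T^{-1}$ to $E\smallsetminus\{0\}$. Hence that restriction is a L\'evy measure and the image formula holds. Your grouping then yields, by uniqueness of the triple, $(\gamma\circ T^{-1})*\rho_G=\gamma$, $\nu=\nu\circ T^{-1}+\nu_\rho$ on $E\smallsetminus\{0\}$, and $\xi=T\xi+c+\eta$, which is (ii).

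You can also bypass image L\'evy measures entirely. For an infinitely divisible $\mu$ with Gaussian covariance $R_\mu$ one has
\[
\langle R_\mu x^*,x^*\rangle=-2\lim_{t\to\infty}t^{-2}\ln|\widehat{\mu}(tx^*)|.
\]
This holds because the jump contribution $-\int_E(1-\cos(t\langle x,x^*\rangle))\,d\nu(x)$ is $o(t^2)$ by dominated convergence. Write $\rho=\rho_G*\rho_0$ with $\rho_0=\delta_\eta*\tilde{e}_s(\nu_\rho)$, and apply the limit formula at $tx^*$ to
\[
\widehat{\gamma}(T^*x^*)\,\widehat{\pi}(T^*x^*)\,\widehat{\rho_G}(x^*)\,\widehat{\rho_0}(x^*)=\widehat{\gamma}(x^*)\,\widehat{\pi}(x^*).
\]
Since $\pi$ and $\rho_0$ have no Gaussian part, you get $\langle QT^*x^*,T^*x^*\rangle+\langle R_{\rho_G}x^*,x^*\rangle=\langle Qx^*,x^*\rangle$, with $Q$ the covariance of $\gamma$. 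That is, $(\gamma\circ T^{-1})*\rho_G=\gamma$. Dividing the identity by $\widehat{\gamma}(T^*x^*)\widehat{\rho_G}(x^*)=\widehat{\gamma}(x^*)\neq 0$ leaves $\widehat{\pi}(T^*x^*)\widehat{\rho_0}(x^*)=\widehat{\pi}(x^*)$, i.e.\ $(\pi\circ T^{-1})*\rho_0=\pi$. This route uses only L\'evy uniqueness and the decomposition of $\rho$. It needs no truncation bookkeeping and no uniqueness of the drift or of the L\'evy measure.
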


Assuming either condition (i) or (ii) of Proposition \ref{krelly}, we can follow the approach of \eqref{meh_sem} to introduce the operator $P_T:B_b(E)\to B_b(E)$, given by
\begin{equation}\label{P_T}
P_Tf(x)=\int_E f(Tx+y)d\rho(y), \qquad f \in B_b(E).
\end{equation}
where $\rho=\rho_\gamma*\rho_\pi$ is the skew factor of $\gamma*\pi$, i.e. $((\gamma*\pi)\circ T^{-1})*\rho=\gamma*\pi$. In a similar way, one can define $P_{T}^\gamma$ and $P_{T}^\pi$ just replacing $\rho$ in \eqref{P_T} by $\rho_\gamma$ and $\rho_\pi$, respectively. 

Recall that, thanks to Theorem \ref{Lp}, the operators $P_T^\gamma$ and $P_T^\pi$ extend consistently to contractive operators in $L^p(E,\gamma)$ and $L^p(E,\pi)$ respectively, for any $p \in [1,\infty)$. 
Moreover, Theorem \ref{thm_hyperclassical} allows us to consider the tensor product operator $P_T^\gamma \otimes P_T^\pi$ in $L^p(E,\gamma)\otimes L^p(E, \pi)$ and prove its continuity with respect to the crossnorm $\alpha_q^{L^p(E,\gamma),L^q(E,\pi)}$ introduced in Section \ref{sect_Tensor}.

\begin{pro}\label{prop2}
For every $p \in (1,\infty)$ and every $q \in [p,1+(p-1)\|T_{|_H}\|^{-2}_{\mathcal{L}(H)}]$, the operator
$$
P_T^\gamma\otimes P_T^\pi:L^p(E,\gamma)\otimes L^q(E,\pi)\longrightarrow L^q(E,\gamma)\otimes_q L^q(E,\pi)
$$
defined on elements of the form $G=\sum_{i=1}^n f_i\otimes g_i$ by
$$
(P_T^\gamma\otimes P_T^\pi)(G)=\sum_{i=1}^n (P_T^\gamma f_i)\otimes (P_T^\pi g_i),
$$
extends uniquely to a continuous linear operator with domain $L^p(E,\gamma)\otimes_q L^q(E,\pi)$ and its norm is less than or equal to $1$. In particular if $p=q$ then $P_T^\gamma\otimes P_T^\pi$ is continuous from $L^p(E\times E,\gamma\times\pi)$ into itself.
\end{pro}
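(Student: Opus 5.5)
The plan is to estimate the Chevet--Saphar norm of the image directly from its definition and to use the two one-variable results already available. These are the hypercontractivity of $P_T^\gamma$ from Theorem \ref{thm_hyperclassical} (note that $T$ is skew with respect to $\gamma$ by Proposition \ref{krelly}) and the contractivity of $P_T^\pi$ on $L^q(E,\pi)$ from Theorem \ref{Lp}. Density and unique extension are then automatic: $L^p(E,\gamma)\otimes L^q(E,\pi)$ is by definition dense in $L^p(E,\gamma)\otimes_q L^q(E,\pi)$, and the target $L^q(E,\gamma)\otimes_q L^q(E,\pi)$ is complete. So the whole proof reduces to the inequality
\[
\alpha_q^{L^q(E,\gamma),L^q(E,\pi)}\bigl((P_T^\gamma\otimes P_T^\pi)G\bigr)\le \alpha_q^{L^p(E,\gamma),L^q(E,\pi)}(G)
\]
for algebraic tensors $G$. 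One also has to check that $P_T^\gamma\otimes P_T^\pi$ is well defined on algebraic tensors, i.e.\ independent of the representation of $G$. This is standard, since the map $(f,g)\mapsto P_T^\gamma f\otimes P_T^\pi g$ is bilinear.

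Fix an arbitrary representation $G=\sum_{i=1}^n f_i\otimes g_i$. The image admits the representation $\sum_i P_T^\gamma f_i\otimes P_T^\pi g_i$, so its $\alpha_q$-norm is at most
\[
\Bigl(\sum_{i=1}^n\|P_T^\gamma f_i\|_{L^q(E,\gamma)}^q\Bigr)^{1/q}\,\mu_{q',L^q(E,\pi)}(P_T^\pi g_1,\ldots,P_T^\pi g_n).
\]
For the first factor, Theorem \ref{thm_hyperclassical} gives $\|P_T^\gamma f_i\|_{L^q(E,\gamma)}\le\|f_i\|_{L^p(E,\gamma)}$ in the stated range of $q$. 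For the second factor, linearity of $P_T^\pi$ gives $\|\sum_i\lambda_iP_T^\pi g_i\|_{L^q(E,\pi)}=\|P_T^\pi(\sum_i\lambda_ig_i)\|_{L^q(E,\pi)}\le\|\sum_i\lambda_ig_i\|_{L^q(E,\pi)}$. Taking the supremum over admissible $(\lambda_i)$ yields $\mu_{q',L^q(E,\pi)}(P_T^\pi g_i)\le\mu_{q',L^q(E,\pi)}(g_i)$. Taking the infimum over representations of $G$ then gives the desired inequality, hence the norm of the extension is at most $1$.

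For the case $p=q$, I will invoke Proposition \ref{prop1}: $L^p(E,\gamma)\otimes_pL^p(E,\pi)$ is isometrically isomorphic to $L^p(E\times E,\gamma\times\pi)$ via $f\otimes g\mapsto f(x)g(y)$. Transporting the operator through this isometry gives the statement.

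The main technical point I expect is the convention mismatch in the definition of $\mu_{r,Y}$. The paper's $\alpha_r$ uses $\mu_{r',Y}$ with the $\ell^{(r')'}=\ell^r$ constraint on $\lambda$, so one must use the right dual exponent, but the argument above is insensitive to which one is used. Beyond that, the argument is routine.
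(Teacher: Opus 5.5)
Your proposal is correct and follows essentially the same route as the paper: bound the Chevet--Saphar norm of the image using the induced representation $\sum_i P_T^\gamma f_i\otimes P_T^\pi g_i$, apply Theorem \ref{thm_hyperclassical} to the first factor and the contractivity of $P_T^\pi$ on $L^q(E,\pi)$ to control $\mu_{q',L^q(E,\pi)}$, then extend by density; the paper works with an $\varepsilon$-optimal representation rather than an infimum over all representations, which is equivalent. You also make explicit three points the paper leaves implicit: well-definedness on algebraic tensors, the inequality $\mu_{q',L^q(E,\pi)}(P_T^\pi g_1,\ldots,P_T^\pi g_n)\le\mu_{q',L^q(E,\pi)}(g_1,\ldots,g_n)$, and the identification with $L^p(E\times E,\gamma\times\pi)$ when $p=q$.
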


\begin{proof}
The result follows from the bounded extension theorem for dense subspaces. Indeed, given $G$ as in the statement and $\varepsilon > 0$, there exist $n_\varepsilon \in \mathbb{N}$, $f_{i,\varepsilon} \in L^p(E,\gamma)$, and $g_{i,\varepsilon} \in L^q(E,\pi)$, such that
$$
G = \sum_{i=1}^{n_\varepsilon} f_{i,\varepsilon} \otimes g_{i,\varepsilon},
$$
and
$$
\left(\sum_{i=1}^{n_\varepsilon}\|f_{i,\varepsilon}\|^q_{L^p(E,\gamma)}\right)^{\frac{1}{q}} \mu_{q',L^q(E,\pi)}(g_{1,\varepsilon},\ldots,g_{n_\varepsilon,\varepsilon}) \leq \alpha_q^{L^p(E,\gamma),L^q(E,\pi)}(G) + \varepsilon.
$$
Using the contractivity of $P_T^\pi$ on $L^q(E,\pi)$, the hypercontractivity of $P_T^\gamma$ (Theorem \ref{thm_hyperclassical}) and the definition of the Chevet--Saphar norm, we obtain
\begin{align*}
\alpha_q^{L^q(E,\gamma),L^q(E,\pi)}((P_T^\gamma \otimes P_T^\pi)(G))
& \leq \left(\sum_{i=1}^{n_\varepsilon} \|P_T^\gamma f_{i,\varepsilon}\|^q_{L^q(E,\gamma)}\right)^{\frac{1}{q}} \mu_{q',L^q(E,\pi)}(P_T^\pi g_{1,\varepsilon}, \ldots, P_T^\pi g_{n_\varepsilon,\varepsilon}) \\
& \leq \left(\sum_{i=1}^{n_\varepsilon} \|f_{i,\varepsilon}\|^q_{L^p(E,\gamma)}\right)^{\frac{1}{q}} \mu_{q',L^q(E,\pi)}(g_{1,\varepsilon}, \ldots, g_{n_\varepsilon,\varepsilon}) \\
& \leq \alpha_q^{L^p(E,\gamma),L^q(E,\pi)}(G) + \varepsilon.
\end{align*}
Since $\varepsilon > 0$ is arbitrary, the claim follows.
\end{proof}

Arguing similarly one can obtain a version of the previous result where the roles of $\pi$ and $\gamma$ are switched.
\begin{pro}\label{pro_girata}
For every $r \in (1,\infty)$ and $p \in [r,1+(r-1)\|T_{|_H}\|^{-2}_{\mathcal{L}(H)}]$, the operator
$$
P_T^\pi\otimes P_T^\gamma:L^r(E,\pi)\otimes L^r(E,\gamma)\longrightarrow L^r(E,\pi)\otimes_r L^p(E,\gamma)
$$
defined on elements of the form $G=\sum_{i=1}^n f_i\otimes g_i$ by
$$
(P_T^\pi\otimes P_T^\gamma)(G)=\sum_{i=1}^n (P_T^\pi f_i)\otimes (P_T^\gamma g_i),
$$
extends uniquely to a continuous linear operator with domain $L^r(E,\pi)\otimes_r L^r(E,\gamma)$ and its norm is less than or equal to $1$.
\end{pro}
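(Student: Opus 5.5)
The argument runs parallel to the proof of Proposition \ref{prop2}, with the roles of the two tensor factors exchanged. Here $\pi$ sits in the first slot, where the exponent $r$ is unchanged, and $\gamma$ in the second, where it is raised from $r$ to $p$. The Chevet--Saphar norm $\alpha_r^{L^r(E,\pi),\,\cdot}$ puts the $\ell^r$-sum on the first factor and the weak quantity $\mu_{r',\cdot}$ on the second. So the estimate we need on the second factor is a bound on $\mu_{r',L^p(E,\gamma)}(P_T^\gamma g_1,\ldots,P_T^\gamma g_n)$ in terms of $\mu_{r',L^r(E,\gamma)}(g_1,\ldots,g_n)$.

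Start from $G=\sum_{i=1}^n f_i\otimes g_i$ and $\varepsilon>0$, and choose a representation $G=\sum_{i=1}^{n_\varepsilon} f_{i,\varepsilon}\otimes g_{i,\varepsilon}$ that is $\varepsilon$-optimal for $\alpha_r^{L^r(E,\pi),L^r(E,\gamma)}(G)$. On the first factor, Theorem \ref{Lp} gives that $P_T^\pi$ is a contraction on $L^r(E,\pi)$, so $\big(\sum_i\|P_T^\pi f_{i,\varepsilon}\|^r_{L^r(E,\pi)}\big)^{1/r}\le\big(\sum_i\|f_{i,\varepsilon}\|^r_{L^r(E,\pi)}\big)^{1/r}$.

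On the second factor, use linearity of $P_T^\gamma$. For scalars $\lambda_i$ with $\|(\lambda_i)\|_{\ell^{r}}\le 1$ (the unit ball in the definition of $\mu_{r',\cdot}$), we have $\sum_i\lambda_iP_T^\gamma g_{i,\varepsilon}=P_T^\gamma\big(\sum_i\lambda_i g_{i,\varepsilon}\big)$. Theorem \ref{thm_hyperclassical}, applied with exponents $r\le p\le 1+(r-1)\|T_{|_H}\|^{-2}_{\mathcal L(H)}$, then yields
\[
\Big\|\sum_i\lambda_iP_T^\gamma g_{i,\varepsilon}\Big\|_{L^p(E,\gamma)}\le\Big\|\sum_i\lambda_i g_{i,\varepsilon}\Big\|_{L^r(E,\gamma)} .
\]
Taking the supremum over $(\lambda_i)$ gives $\mu_{r',L^p(E,\gamma)}(P_T^\gamma g_{1,\varepsilon},\ldots)\le\mu_{r',L^r(E,\gamma)}(g_{1,\varepsilon},\ldots)$. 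This is also the implicit step in Proposition \ref{prop2}, and it is the only point needing care: the hypercontractive estimate must be applied to the linear combination as a whole, not term by term. This is legitimate precisely because the weak norm $\mu_{r',\cdot}$ is a supremum of norms of linear combinations.

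Multiplying the two bounds, $\alpha_r^{L^r(E,\pi),L^p(E,\gamma)}\big((P_T^\pi\otimes P_T^\gamma)G\big)\le\alpha_r^{L^r(E,\pi),L^r(E,\gamma)}(G)+\varepsilon$, and since $\varepsilon$ is arbitrary the operator is a contraction on the algebraic tensor product. The same computation shows it is well defined, independently of the representation of $G$: if $G=0$, the image has norm $0$, and $\alpha_r$ is a norm on the algebraic tensor product. Finally, the algebraic tensor product is dense in its $\alpha_r$-completion $L^r(E,\pi)\otimes_r L^r(E,\gamma)$, so the bounded extension theorem gives a unique continuous extension with norm at most $1$.
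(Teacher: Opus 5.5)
Your proof is correct and is the argument the paper intends: it gives no separate proof, saying only that one argues as in Proposition \ref{prop2} with the factors exchanged. As you note, contractivity of $P_T^\pi$ then handles the $\ell^r$-factor term by term, while the hypercontractivity of $P_T^\gamma$ (Theorem \ref{thm_hyperclassical}) is applied to whole linear combinations inside $\mu_{r',\cdot}$.

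One remark of yours is loose: the norm estimate via $\varepsilon$-optimal representations does not by itself show that the image is independent of the representation. This costs nothing, because $P_T^\pi\otimes P_T^\gamma$ is the algebraic tensor product of two linear maps and so is automatically well defined on $L^r(E,\pi)\otimes L^r(E,\gamma)$.
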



In the following lemma, through the map $F_\bullet$, introduced in Proposition \ref{pro_Fbullet}, we establish a connection between $F_{P_Tf}$ and $(P_T^\gamma \otimes P_T^\pi)F_f$ whenever $f$ belongs to $\mathcal{X}_{p,q}(E;\gamma,\pi)$. A similar result has been proved in \cite{AvN15} for functions $f \in L^2(\gamma*\pi)$.

\begin{lemma}\label{ext_p}
Let $p,q \in [1,\infty)$ with $q\ge p$. Then, for every $f \in \mathcal{X}_{p,q}(E;\gamma,\pi)$ it holds
\begin{equation}\label{FPTf}
F_{P_T f}=(P_T^\gamma \otimes P_T^\pi)F_f,\qquad (\gamma\times\pi)\text{-a.e. in\,} E\times E.
\end{equation}
\end{lemma}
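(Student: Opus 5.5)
We prove \eqref{FPTf} first for $f\in B_b(E)$ by direct computation, and then pass to general $f\in\mathcal{X}_{p,q}(E;\gamma,\pi)$ by density (Proposition \ref{pro_spacesXpq}) and continuity of both sides.

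\emph{Bounded case.} Fix $f\in B_b(E)$. By Proposition \ref{krelly} we have $\rho=\rho_\gamma*\rho_\pi$, and $T$ is linear, so for every $(x,y)\in E\times E$
\begin{align*}
F_{P_Tf}(x,y)=P_Tf(x+y)=\int_E\int_E f(Tx+u+Ty+v)\,d\rho_\gamma(u)\,d\rho_\pi(v).
\end{align*}
On the other hand $(P_T^\gamma\otimes P_T^\pi)$ acts on a bounded function $G$ of two variables by
\[
(x,y)\mapsto\int_E\int_E G(Tx+u,Ty+v)\,d\rho_\gamma(u)\,d\rho_\pi(v).
\]
This is true for elementary tensors by definition, and it extends by density: both the integral formula and the extended operator are continuous on $L^p(E\times E,\gamma\times\pi)$. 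Continuity of the integral formula follows from Jensen's inequality and the skew relations $(\gamma\circ T^{-1})*\rho_\gamma=\gamma$ and $(\pi\circ T^{-1})*\rho_\pi=\pi$, exactly as in Theorem \ref{Lp}. The case $p=q$ of Proposition \ref{prop2}, together with Proposition \ref{prop1}, identifies this operator with the extension. Applying the formula to $G=F_f$ gives \eqref{FPTf} pointwise.

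\emph{General case.} Take $f\in\mathcal{X}_{p,q}(E;\gamma,\pi)$ and $f_n\in B_b(E)$ with $f_n\to f$ in $\|\cdot\|_{p,q}$.
\begin{itemize}
\item[(a)] Since $q\ge p$, the inclusion \eqref{inclusion} gives $f_n\to f$ in $L^p(E,\gamma*\pi)$. By Theorem \ref{Lp}, $P_Tf_n\to P_Tf$ in $L^p(E,\gamma*\pi)$. By \eqref{formula_in_proof}, $F_{P_Tf_n}\to F_{P_Tf}$ in $L^p(E\times E,\gamma\times\pi)$.
\item[(b)] By Proposition \ref{pro_Fbullet}, $F_{f_n}\to F_f$ in $L^q(E,\pi;L^p(E,\gamma))$. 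Remark \ref{dueo} (Proposition \ref{prop1}(i)) embeds this space contractively into $L^p(E,\gamma)\otimes_q L^q(E,\pi)$.
\item[(c)] Proposition \ref{prop2} then gives convergence of $(P_T^\gamma\otimes P_T^\pi)F_{f_n}$ in $L^q(E,\gamma)\otimes_qL^q(E,\pi)$. By Proposition \ref{prop1} this space is $L^q(E\times E,\gamma\times\pi)$, which embeds in $L^p$.
\end{itemize}
Both sequences are equal for each $n$, so their limits coincide $(\gamma\times\pi)$-a.e.

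\emph{Main obstacle.} The delicate point is consistency of the two meanings of $(P_T^\gamma\otimes P_T^\pi)F_f$: the value given by the $\alpha_q$-extension of Proposition \ref{prop2} must agree with the integral formula, or equivalently with the $L^p(\gamma\times\pi)$-extension. One way to handle this is to compose with $\iota_2$ from Proposition \ref{prop1}(ii) and use that all the embeddings involved are continuous into $L^p(E\times E,\gamma\times\pi)$. It then suffices to check agreement on elementary tensors $f_i\otimes g_i$ with $f_i,g_i$ bounded, where it is immediate from the definitions. The rest is routine bookkeeping of the identifications in Proposition \ref{prop1}.

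If the range condition of Proposition \ref{prop2} ($q\le 1+(p-1)\|T_{|_H}\|^{-2}$) is not met, we can still run step (c) at exponent $p$ only. In that case Proposition \ref{prop2} with $p=q$ together with the contraction of $\iota_1$ suffices, since convergence in $L^q(\pi;L^p(\gamma))$ implies convergence in $L^p(\gamma\times\pi)$ for $q\ge p$.
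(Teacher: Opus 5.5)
Your proposal is correct, and its essential step is the paper's argument. You identify the continuous extension of $P_T^\gamma\otimes P_T^\pi$ on $L^p(E\times E,\gamma\times\pi)$ with the integral operator $G\mapsto\int_E\int_E G(Tx+u,Ty+v)\,d\rho_\gamma(u)\,d\rho_\pi(v)$, using density of elementary tensors, Jensen's inequality and the skew relations, and then conclude with $\rho=\rho_\gamma*\rho_\pi$. The paper applies this identification directly to $F_f$, since the skew relations give $F_f(Tx+u,Ty+v)=f(Tx+u+Ty+v)$ for $(\gamma\times\pi\times\rho_\gamma\times\rho_\pi)$-a.e. $(x,y,u,v)$. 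Your second approximation in $\mathcal{X}_{p,q}(E;\gamma,\pi)$ is therefore only needed for the $\alpha_q$-consistency you flag, which the paper leaves implicit. Run that density argument in $L^q(E,\pi;L^p(E,\gamma))$, not in the completion $L^p(E,\gamma)\otimes_q L^q(E,\pi)$, because the latter need not embed continuously in $L^p(E\times E,\gamma\times\pi)$.
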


\begin{proof}
We point out that equality \eqref{FPTf} is meaningful since we can identify $L^q(E,\pi;L^p(E,\gamma))$ with a subspace of $L^p(E,\gamma)\otimes_q L^q(E,\pi)$, as pointed out in Remark \ref{dueo}.

Now, let $f \in \mathcal{X}_{p,q}(E;\gamma,\pi)$. Since $F_f \in X_{p,q}(\gamma,\pi)\subset L^q(E,\pi;L^p(E,\gamma))\subset L^p(E,\pi;L^p(E,\gamma))$, by Propositions \ref{prop1} and \ref{pro_Fbullet}, there exists $(g_n) \subseteq B_b(E)\otimes B_b(E)$ such that $g_n \to F_f$ in $L^p(E\times E,\gamma\times\pi)$.  
Moreover, for $(\gamma\times \pi)$-almost every $(x, y) \in E\times E$, if $g_n= g_n^{(1)}\otimes g_n^{(2)}$, then
\begin{align*}
[(P_T^\gamma \otimes P_T^\pi)g_n](x,y)
& = (P_T^\gamma g_n^{(1)})(x)  (P_T^\pi g_n^{(2)})(y) \\
& = \left( \int_E g_n^{(1)}(Tx+z_1)\,d\rho_\gamma(z_1) \right) \left( \int_E g_n^{(2)}(Ty+z_2)\,d\rho_\pi(z_2) \right) \\
& = \int_E \int_E (g_n^{(1)} \otimes g_n^{(2)})(Tx+z_1, Ty+z_2)\,d\rho_\gamma(z_1)\,d\rho_\pi(z_2)\\
&= \int_E \int_E g_n(Tx+z_1, Ty+z_2)\,d\rho_\gamma(z_1)\,d\rho_\pi(z_2)=:G_n(x,y).
\end{align*}
By using the continuity of $P_T^\gamma \otimes P_T^\pi$ in $L^p(E\times E, \gamma \times \pi)$ (see Proposition \ref{prop2}), we deduce that
$(P_T^\gamma \otimes P_T^\pi)g_n$ converges to $(P_T^\gamma \otimes P_T^\pi)F_f$ in $L^p(E\times E,\gamma\times \pi)$ as $n \to +\infty$. On the other, we claim that $G_n$ converges to 
$$G(x,y) := \int_E \int_E F_f(Tx+z_1, Ty+z_2)\,d\rho_\gamma(z_1)\,d\rho_\pi(z_2)$$
in $L^p(E\times E,\gamma \times \pi)$. Indeed, by the Jensen inequality and the skew property of $T$, we get
\begin{align*}
\lim_{n\ra\infty}&\|G_n - G\|_{L^p(E\times E,\gamma \times \pi)} 
= \lim_{n\ra\infty}\int_E\int_E |G_n(x,y) - G(x,y)|^p\,d\gamma(x)\,d\pi(y) \\
&\leq \lim_{n\ra\infty}\int_E\int_E\int_E\int_E |g_n(Tx+z_1, Ty+z_2) - F_f(Tx+z_1, Ty+z_2)|^p\,d\rho_\gamma(z_1)\,d\rho_\pi(z_2)\,d\gamma(x)\,d\pi(y) \\
&= \lim_{n\ra\infty}\int_{E}\int_E |g_n(x,y) - F_f(x,y)|^p\,d\gamma(x)\,d\pi(y) = 0.
\end{align*}
Consequently, by uniqueness, for $(\gamma\times\pi)$-almost every $(x,y)\in E\times E$ we get 
\begin{align*}
[(P_T^\gamma \otimes P_T^\pi)F_f](x,y) 
&= \int_E \int_E F_f(Tx+z_1, Ty+z_2)\,d\rho_\gamma(z_1)\,d\rho_\pi(z_2) \\
&= \int_E f(Tx + Ty + z)\,d(\rho_\gamma * \rho_\pi)(z) \\
&= P_T f(x+y) = F_{P_T f}(x,y),
\end{align*}
which concludes the proof.
\end{proof}

\begin{rmk}\rm{ Note that in the previous lemma we can obtain a similar result if we change the roles of $\gamma$ and $\pi$ and simultaneously those of $p$ and $q$. Consequently, for every $q,p \in [1,\infty)$ with $p\ge q$ and any $f \in \mathcal{X}_{q,p}(E;\pi, \gamma)$ it holds
\begin{equation}\label{FPTf_girata}
F_{P_T f}=(P_T^\pi \otimes P_T^\gamma)F_f,\qquad (\pi\times\gamma)\text{-a.e. in\,} E\times E.
\end{equation}}
\end{rmk}
\noindent
Proposition \ref{prop2} and Lemma \ref{ext_p} are crucial to prove the main result of this section which establishes two different versions of a hypercontractivity type property. 
\begin{thm}\label{thm_hyper}
Let $p \in (1,\infty)$. For every $q \in [p,1+(p-1)\|T_{|_H}\|^{-2}_{\mathcal{L}(H)}]$  and  $f \in \mathcal{X}_{p,q}(E;\gamma, \pi)$ it holds that
\begin{equation}\label{est-qpq}
\|P_T f\|_{L^q(E, \gamma*\pi)}\le \|f\|_{p,q}.
\end{equation}
\noindent
Moreover, if $r\in (1,\infty)$ and $f \in L^r(E, \gamma*\pi)$ then $P_T f\in \mathcal{X}_{p,r}(E;\gamma,\pi)$ for every $p\in [r,1+(r-1)\|T_{|_H}\|^{-2}_{\mathcal{L}(H)}]$ and 
\begin{equation}\label{idea_davide}
\|P_T f\|_{p,r}\le \|f\|_{L^r(E,\gamma*\pi)}.
\end{equation}

\end{thm}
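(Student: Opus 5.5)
For \eqref{est-qpq} I would pass to the product space. By \eqref{formula_in_proof} with $r=q$, $\|P_Tf\|_{L^q(E,\gamma*\pi)}=\|F_{P_Tf}\|_{L^q(E\times E,\gamma\times\pi)}$. Lemma \ref{ext_p} (applicable since $q\ge p$) then gives $F_{P_Tf}=(P_T^\gamma\otimes P_T^\pi)F_f$ almost everywhere. By Remark \ref{dueo}, $F_f\in L^q(E,\pi;L^p(E,\gamma))$ can be regarded as an element of $L^p(E,\gamma)\otimes_q L^q(E,\pi)$ with $\alpha_q$-norm at most $\|F_f\|_{L^q(E,\pi;L^p(E,\gamma))}=\|f\|_{p,q}$, since $\|\iota_1\|=1$ in Proposition \ref{prop1}(i). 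Proposition \ref{prop2} sends this element into $L^q(E,\gamma)\otimes_q L^q(E,\pi)$ with norm not increased. The last isometric identification in Proposition \ref{prop1} turns that space into $L^q(E\times E,\gamma\times\pi)$. Chaining these yields
\[
\|P_Tf\|_{L^q(E,\gamma*\pi)}\le \alpha_q\bigl(\iota_1 F_f\bigr)\le\|f\|_{p,q}.
\]
I would first check this for $f\in B_b(E)$, where $F_f\in B_b(E)\otimes$-approximable and all identifications are transparent, and then extend by density of $B_b(E)$ in $\mathcal{X}_{p,q}$ (Proposition \ref{pro_spacesXpq}) together with the continuity of $P_T$ on $L^p(E,\gamma*\pi)$ (Theorem \ref{Lp}).

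The main obstacle is consistency of identifications. The operator given by Proposition \ref{prop2} acts on the abstract completion, while Lemma \ref{ext_p} speaks of functions. I must verify that the two realizations of $(P_T^\gamma\otimes P_T^\pi)F_f$ agree: the abstract one mapped through $\iota_2$ into $L^q(E,\gamma;L^q(E,\pi))\cong L^q(\gamma\times\pi)$, and the pointwise integral formula. I would do this on elementary tensors of step functions, where both equal $\int\!\int g(Tx+z_1,Ty+z_2)\,d\rho_\gamma d\rho_\pi$. I would then pass to limits using continuity into $L^p(\gamma\times\pi)$, exactly as in the proof of Lemma \ref{ext_p}, and use that $L^q$-limits and $L^p$-limits coincide almost everywhere.

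For \eqref{idea_davide} I would argue symmetrically with roles swapped. Since $f\in L^r(E,\gamma*\pi)=\mathcal{X}_{r,r}(E;\pi,\gamma)$, the function $F_f$ lies in $L^r(E\times E,\pi\times\gamma)=L^r(E,\pi)\otimes_r L^r(E,\gamma)$ with norm $\|f\|_{L^r}$. By \eqref{FPTf_girata} with $p=q=r$, $F_{P_Tf}=(P_T^\pi\otimes P_T^\gamma)F_f$. Proposition \ref{pro_girata} maps this into $L^r(E,\pi)\otimes_r L^p(E,\gamma)$ with norm at most $\|f\|_{L^r}$. By Proposition \ref{prop1}(ii) ($\|\iota_2\|=1$) this space embeds contractively into $L^r(E,\pi;L^p(E,\gamma))$. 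The resulting function is $F_{P_Tf}(x,y)=P_Tf(x+y)$, so it lies in $X^{p,r}(E;\gamma,\pi)$. Hence $P_Tf\in\mathcal{X}_{p,r}(E;\gamma,\pi)$ with $\|P_Tf\|_{p,r}\le\|f\|_{L^r(E,\gamma*\pi)}$, after the same consistency check as above.
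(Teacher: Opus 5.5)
Your proposal is correct and follows the paper's proof essentially step by step. For \eqref{est-qpq} you use the same chain: \eqref{formula_in_proof}, Lemma~\ref{ext_p}, the contractive embedding $\iota_1$ of Proposition~\ref{prop1}(i), Proposition~\ref{prop2}, and the identification $L^q(E,\gamma)\otimes_q L^q(E,\pi)=L^q(E\times E,\gamma\times\pi)$; for \eqref{idea_davide} you use the symmetric chain via \eqref{FPTf_girata}, Proposition~\ref{pro_girata} and $\iota_2$ from Proposition~\ref{prop1}(ii). Your additional consistency check (matching the abstract tensor extension with the pointwise realization on elementary tensors, then passing to limits in $L^p(E\times E,\gamma\times\pi)$) and the density reduction to $B_b(E)$ make explicit a point the paper uses only tacitly, and both go through as you describe.
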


\begin{proof}
Let us start by proving \eqref{est-qpq}. So, let us fix $p,q$ as in the statement.
Applying \eqref{formula_in_proof} with $g=P_Tf$ and $r=q$ we get
$$\|P_T f\|_{L^q(E, \gamma*\pi)}=\|F_{P_T f}\|_{L^q(E\times E, \gamma\times \pi)}.$$
Thus, using Lemma \ref{ext_p} and Propositions \ref{prop1} and \ref{prop2}, we can estimate
\begin{align*}
\|P_T f\|_{L^q(E, \gamma*\pi)}=&\|F_{P_T f}\|_{L^q(E\times E, \gamma\times \pi)}\\
=& \|(P_T^\gamma \otimes P_T^\pi)F_f\|_{L^q(E\times E, \gamma\times \pi)}\\
=& \|(P_T^\gamma \otimes P_T^\pi)F_f\|_{L^q(E,\gamma)\otimes_q L^q( E,\pi)}\\
\le & \|F_f\|_{L^p(E,\gamma)\otimes_q L^q( E,\pi)}\\
\le &  \|F_f\|_{L^q(E,\pi;L^p(E, \gamma))}\\
= &\|f\|_{p,q}
\end{align*}
where in the second to last inequality we used the embedding of $L^q(E,\pi;L^p(E, \gamma))$ into $L^p(E,\gamma)\otimes_{q} L^q(E,\pi)$ stated in Proposition \ref{prop1}(i). In order to prove \eqref{idea_davide}, let us fix $f \in L^r(E, \gamma*\pi)$ for some $r\in (1,\infty)$. Arguing similarly as above and using equality \eqref{FPTf_girata} and Proposition \ref{pro_girata} we get
\begin{align*}
    \|P_Tf\|_{p,r} & = \|F_{P_T f}\|_{L^r(E,\pi;L^p(E,\gamma))}\\
    &\leq \|F_{P_T f}\|_{L^r(E,\pi)\otimes_r L^p(E,\gamma)}\\
    & =\|(P_T^\pi\otimes P_T^\gamma)F_f\|_{L^r(E,\pi)\otimes_r L^p(E,\gamma)}\\
    & \leq \|F_f\|_{L^r(E,\pi)\otimes_r L^r(E,\gamma)}\\
    & = \|F_f\|_{L^r(E,\pi; L^r(E,\gamma)}\\
    & = \|f\|_{L^r(E,\gamma*\pi)}.
\end{align*}
Thus, estimate \eqref{idea_davide} is proved.
\end{proof}

\begin{cor}
For every $p \in (1,\infty)$ and $q \in [p, 1+(p-1)\|T_{|_H}\|_{\mathcal{L}(H)}^{-2}]$, the operator $P_T$ defined in \eqref{P_T} extends uniquely to a contractive linear operator from $\mathcal{X}_{p,q}(E;\gamma, \pi)$ into itself.
\end{cor}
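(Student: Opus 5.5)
The plan is to obtain the contractivity on $\mathcal{X}_{p,q}(E;\gamma,\pi)$ by chaining the first estimate of Theorem \ref{thm_hyper} with the continuous embedding of $L^q$ into the mixed-norm space from Proposition \ref{pro_spacesXpq}, and then to pass from $B_b(E)$ to the whole space by density.

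\emph{Step 1 (the bound on bounded functions).} Fix $p\in(1,\infty)$ and $q\in[p,1+(p-1)\|T_{|_H}\|_{\mathcal{L}(H)}^{-2}]$, and take $f\in B_b(E)$, on which $P_T$ is given by \eqref{P_T}. Since $q\ge p$, the Jensen computation in the proof of Proposition \ref{pro_spacesXpq} gives $\|g\|_{p,q}\le\|g\|_{L^q(E,\gamma*\pi)}$ for every $g\in L^q(E,\gamma*\pi)$; in particular the embedding constant is $1$. Applying this with $g=P_Tf$ and then \eqref{est-qpq}, we get
\begin{align*}
\|P_Tf\|_{p,q}\le\|P_Tf\|_{L^q(E,\gamma*\pi)}\le\|f\|_{p,q}.
\end{align*}

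\emph{Step 2 (extension and uniqueness).} By Proposition \ref{pro_spacesXpq}, $B_b(E)$ is dense in the Banach space $\mathcal{X}_{p,q}(E;\gamma,\pi)$. The bounded extension theorem therefore yields a unique contractive linear extension of $P_T$ to $\mathcal{X}_{p,q}(E;\gamma,\pi)$.

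\emph{Step 3 (consistency).} One should check that this extension agrees with the operator already defined on $\mathcal{X}_{p,q}(E;\gamma,\pi)$ in Theorem \ref{thm_hyper}, namely the restriction of the $L^p(E,\gamma*\pi)$-contraction of Theorem \ref{Lp}. Take $f_n\in B_b(E)$ with $f_n\to f$ in $\mathcal{X}_{p,q}$. Then $f_n\to f$ in $L^p(E,\gamma*\pi)$ by \eqref{inclusion}, so $P_Tf_n\to P_Tf$ in $L^p(E,\gamma*\pi)$. At the same time, $(P_Tf_n)$ converges in $\mathcal{X}_{p,q}$, and hence also in $L^p$, to the value of the extension at $f$. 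The two limits therefore coincide almost everywhere.

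This consistency check is the only mildly delicate point; the rest is immediate once \eqref{est-qpq} is available.
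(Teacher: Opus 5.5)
Your proof is correct and matches the paper's argument. The paper likewise chains the norm-one embedding $L^q(E,\gamma*\pi)\hookrightarrow\mathcal{X}_{p,q}(E;\gamma,\pi)$ from Proposition \ref{pro_spacesXpq} with \eqref{est-qpq} on $B_b(E)$, concludes by density of $B_b(E)$, and leaves implicit your Step 3 consistency check with the $L^p$-contraction of Theorem \ref{Lp}, which is correct.
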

\begin{proof}
The continuous embeddings in Proposition \ref{pro_spacesXpq} and estimate \eqref{est-qpq} yield that 
\begin{equation*}
\|P_T f\|_{p,q}\le \|P_Tf\|_{L^q(E,\gamma*\pi)}\le \|f\|_{p,q},\qquad\;\, f \in B_b(E).
\end{equation*}
The density of $B_b(E)$ in $\mathcal{X}_{p,q}(E;\gamma,\pi)$, stated in Proposition \ref{pro_spacesXpq}, concludes the proof.
\end{proof}

\subsection{An application to compactness}\label{consequences}

A classical consequence of the hypercontractivity property of an operator is the compactness of its square (see, for example, \cite{Wu2000}). Theorem \ref{thm_hyper} provides a similar type of consequence. We start by observing that bounded sets in $\mathcal{X}_{p,q}(E;\gamma,\pi)$ are $p$-uniformly integrable. 

\begin{lemma}\label{cor_p-unif_int}
    Let $p,q \in (1,\infty)$ be such that $p\in[1+(q-1)\|T_{|_H}\|^{2}_{\mathcal{L}(H)},q)$. For every $M\in\R^+$ the set
    \begin{align*}
       \Gamma_{p,q}(M):= \{P_Tf\,|\,\|f\|_{p,q}\leq M\}\subseteq L^{p}(E,\gamma*\pi)
    \end{align*}
    is $p$-uniformly integrable, namely, for every $\varepsilon>0$, there exists $\delta>0$ such that for every measurable set $A\subseteq E$ with $(\gamma*\pi)(A)\le \delta$ it holds that
    $$\sup_{g \in \Gamma_{p,q}(M)}\int_A |g|^pd(\gamma*\pi)\le \varepsilon.$$
\end{lemma}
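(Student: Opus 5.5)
The plan is to obtain uniform integrability from a uniform bound in a strictly higher Lebesgue exponent, via Hölder's inequality. The hypothesis $p\in[1+(q-1)\|T_{|_H}\|^{2}_{\mathcal{L}(H)},q)$ is equivalent to $q\in(p,1+(p-1)\|T_{|_H}\|^{-2}_{\mathcal{L}(H)}]$ (when $\|T_{|_H}\|=0$ the constraint is vacuous), so the pair $(p,q)$ is admissible in the first part of Theorem \ref{thm_hyper}. Estimate \eqref{est-qpq} then gives, for every $f$ with $\|f\|_{p,q}\le M$,
\[
\|P_Tf\|_{L^q(E,\gamma*\pi)}\le \|f\|_{p,q}\le M .
\]
Hence $\Gamma_{p,q}(M)$ is bounded in $L^q(E,\gamma*\pi)$. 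Since $q>p$, it is contained in $L^p(E,\gamma*\pi)$ by the inclusion \eqref{inclusion}, or simply because $\gamma*\pi$ is a probability measure.

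Next, fix a measurable $A\subseteq E$ and $g=P_Tf\in\Gamma_{p,q}(M)$. I apply Hölder's inequality with exponents $q/p$ and $q/(q-p)$ to $|g|^p\chi_A$:
\[
\int_A|g|^p\,d(\gamma*\pi)\le \Big(\int_E|g|^q\,d(\gamma*\pi)\Big)^{p/q}\,(\gamma*\pi)(A)^{1-p/q}\le M^p\,(\gamma*\pi)(A)^{1-p/q}.
\]
Given $\varepsilon>0$, the choice $\delta=(\varepsilon M^{-p})^{q/(q-p)}$ makes the right-hand side at most $\varepsilon$ whenever $(\gamma*\pi)(A)\le\delta$, uniformly in $g$. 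If $M=0$ the set reduces to $\{0\}$ and there is nothing to prove.

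The only delicate point is that \eqref{est-qpq} is stated for $f\in\mathcal{X}_{p,q}(E;\gamma,\pi)$ with $P_T$ understood through its extension. By the density of $B_b(E)$ (Proposition \ref{pro_spacesXpq}) and the preceding Corollary, this extension is consistent with the $L^p$-extension of Theorem \ref{Lp}, because $\mathcal{X}_{p,q}\hookrightarrow L^p$ continuously. So $P_Tf$ is an unambiguous element of $L^p$ that also lies in $L^q$ with the stated bound. I expect no other obstacle, and the strict inequality $p<q$ is exactly what makes the exponent $1-p/q$ positive.
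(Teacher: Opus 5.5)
Your proposal is correct and follows the paper's proof essentially verbatim. Both use the bound $\|P_Tf\|_{L^q(E,\gamma*\pi)}\le\|f\|_{p,q}\le M$ from \eqref{est-qpq}, then H\"older's inequality with exponents $q/p$ and $q/(q-p)$ and the choice $\delta=(\varepsilon M^{-p})^{q/(q-p)}$. Your extra remarks on translating the exponent range, the case $M=0$, and the consistency of the extensions of $P_T$ are harmless details that the paper leaves implicit.
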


\begin{proof}
   Let us fix $p, q$ as in the statement and for every $\eps>0$ consider a measurable set $A\subseteq E$ such that $(\gamma*\pi)(A)\leq \left(\frac{\eps}{M^p}\right)^{\frac{q}{q-p}}$. Thus, by the H\"older inequality we get
    \begin{align*}
        \int_A|P_Tf|^pd(\gamma*\pi) & = \int_E\chi_A|P_Tf|^pd(\gamma*\pi)\leq  [(\gamma*\pi)(A)]^{\frac{q-p}{q}}\|P_Tf\|_{L^q(E,\gamma*\pi)}^p\\
        &\leq [(\gamma*\pi)(A)]^{\frac{q-p}{q}}\|f\|_{p,q}^p\leq [(\gamma*\pi)(A)]^{\frac{q-p}{q}} M^p \leq \eps.
    \end{align*}
    This concludes the proof.
\end{proof}

Taking Lemma \ref{cor_p-unif_int} into account and arguing as in \cite[Theorem 2.3]{Wu2000} we deduce the compactness of $P^2_T$ as operator from $\mathcal{X}_{p,q}(E;\gamma,\pi)$ into $L^p(E, \gamma*\pi)$.

\begin{thm}
    Assume that there exist a measure $\vartheta\gg \gamma*\pi$ and a function $k:E\times E\ra\R$ such that for every $x\in E$ the map $y\mapsto k(x,y)$ belongs to $L^1(E,\vartheta)$ and for $(\gamma*\pi)$-a.e. $x\in E$ and $f\in B_b(E)$ it holds 
    \begin{align*}
        P_Tf(x)=\int_Ek(x,y)f(y)d\vartheta(y).
    \end{align*}
    Then, for every $p,q \in (1,\infty)$ such that $p\in[1+(q-1)\|T_{|_H}\|^{2}_{\mathcal{L}(H)},q)$, the operator $P_T^2=P_T\circ P_T:\mathcal{X}_{p,q}(E;\gamma,\pi)\ra L^p(E,\gamma*\pi)$ is compact.
\end{thm}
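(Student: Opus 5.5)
The plan is to follow the scheme of \cite[Theorem 2.3]{Wu2000}. We split $P_T^2$ as the first application of $P_T$, which improves integrability, followed by the second, which exploits the kernel. Fix $p,q$ as in the statement and a bounded set $B=\{f\,|\,\|f\|_{p,q}\le M\}$. It suffices to show that every sequence $(h_n)=(P_T^2f_n)$ with $f_n\in B$ has a Cauchy subsequence in $L^p(E,\gamma*\pi)$.

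\emph{Step 1 (first application of $P_T$).} Put $g_n:=P_Tf_n$. By \eqref{est-qpq} in Theorem \ref{thm_hyper}, which applies because $q\le 1+(p-1)\|T_{|_H}\|^{-2}_{\mathcal{L}(H)}$ is equivalent to $p\ge 1+(q-1)\|T_{|_H}\|^{2}_{\mathcal{L}(H)}$, we get $\|g_n\|_{L^q(E,\gamma*\pi)}\le M$. Lemma \ref{cor_p-unif_int} then shows that the $g_n$ are $p$-uniformly integrable. For $N>0$ set $g_n^N:=g_n\chi_{\{|g_n|\le N\}}$. Chebyshev's inequality gives $(\gamma*\pi)(\{|g_n|>N\})\le M^qN^{-q}$, and Hölder's inequality, as in the proof of Lemma \ref{cor_p-unif_int}, gives
\begin{align*}
\sup_n\|g_n-g_n^N\|_{L^p(E,\gamma*\pi)}\le M\,(M^qN^{-q})^{\frac{q-p}{pq}}\longrightarrow 0\qquad (N\to\infty).
\end{align*}
Since $P_T$ is a contraction on $L^p(E,\gamma*\pi)$ by Theorem \ref{Lp}, we get $\sup_n\|P_Tg_n-P_Tg_n^N\|_{L^p}\to0$. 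So it is enough to extract, for each fixed $N$, an $L^p$-convergent subsequence of $(P_Tg_n^N)_n$, and then conclude with a diagonal argument over $N\in\N$ combined with an $\varepsilon/3$ estimate.

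\emph{Step 2 (second application of $P_T$, via the kernel).} Fix $N$ and choose Borel versions of $g_n^N$ bounded by $N$; these lie in $B_b(E)$, so the kernel representation applies to them. Regard $(g_n^N)_n$ as a bounded sequence in $L^\infty(E,\vartheta)=(L^1(E,\vartheta))^*$. Since $E$ is separable, $L^1(E,\vartheta)$ is separable (assuming $\vartheta$ is $\sigma$-finite), so the closed ball of $L^\infty(E,\vartheta)$ is weak$^*$ sequentially compact. We can therefore extract a subsequence converging weak$^*$ to some $\phi$ with $|\phi|\le N$. Because $k(x,\cdot)\in L^1(E,\vartheta)$ for every $x$, we get for $(\gamma*\pi)$-a.e. $x$
\begin{align*}
P_Tg^N_{n_k}(x)=\int_Ek(x,y)g^N_{n_k}(y)\,d\vartheta(y)\longrightarrow\int_Ek(x,y)\phi(y)\,d\vartheta(y).
\end{align*}
Moreover $|P_Tg^N_{n_k}|\le N$, since $P_T$ is Markovian ($\rho$ is a probability measure). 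Dominated convergence on the probability space $(E,\gamma*\pi)$ then yields convergence in $L^p(E,\gamma*\pi)$. The diagonal argument of Step 1 completes the proof that $P_T^2(B)$ is relatively compact.

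The main obstacle I expect is the measure-theoretic bookkeeping in Step 2. The $g_n^N$ are defined only $(\gamma*\pi)$-a.e., while the kernel integral is taken against $\vartheta$. The hypothesis $\vartheta\gg\gamma*\pi$ means only that $(\gamma*\pi)$-null sets need not be $\vartheta$-null, so the value of $\int k(x,y)g(y)\,d\vartheta(y)$ could depend on the chosen version. I would handle this by noting that the representation holds for every $f\in B_b(E)$. Applying it to $\chi_A$ with $(\gamma*\pi)(A)=0$ gives $\int_A k(x,y)\,d\vartheta(y)=P_T\chi_A(x)=0$ for a.e. $x$, because $P_T$ is well defined on $L^1(E,\gamma*\pi)$. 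Hence the integral does not depend on the version, and the weak$^*$ limit can be taken in the quotient relevant for these functionals. If that fails, I would instead replace $\vartheta$ by the restriction of $k$ to a density with respect to $\gamma*\pi$ on the set where this is possible.
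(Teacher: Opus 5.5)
Your proposal is correct and follows the route the paper indicates: the paper's proof consists only of citing Lemma \ref{cor_p-unif_int} and \cite[Theorem 2.3]{Wu2000}, and you fill this in by using the $L^q$ bound \eqref{est-qpq} to make truncation errors uniformly small, then the truncation/weak$^*$-compactness/kernel argument. The $\sigma$-finiteness of $\vartheta$ that you assume is genuinely needed, and the paper leaves it implicit. Without it the statement fails: take $E=\R^2$, $\gamma=\mathcal{N}(0,1)\otimes\delta_0$, $\pi=\delta_0\otimes\lambda$ with $\lambda$ a Cauchy law, $T=\mathrm{diag}(a,1)$ with $0<|a|<1$, and $\vartheta$ equal to Lebesgue measure in the first variable times counting measure in the second. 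All hypotheses hold, yet $P_T^2$ is the identity on functions of the second variable, so it is not compact.

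The version issue in your last paragraph needs no fix. Once Borel versions of the $g_n^N$ are chosen, the kernel representation holds off a single $(\gamma*\pi)$-null set for all $n$ at once. Weak$^*$ convergence in $L^\infty(E,\vartheta)$ then directly gives a.e.\ convergence of $P_Tg^N_{n_k}$, with no compatibility of the weak$^*$ limit with $(\gamma*\pi)$-classes required.
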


\section{Generalized Mehler semigroups}\label{section_Mehler}

In this section we focus our attention to a particular case of skew operators, that of generalized Mehler semigroups, extensively studied in the literature. We refer to \cite{ARW00,AFP23,BRS96,FR00,LR02,LR04,Lun22,LR21,OR16,Pes11,RW03,SS01} for an overview of their properties.

We consider generalized Mehler semigroups defined on bounded Borel functions $f: E \to \mathbb{R}$, where $E$ is a separable Hilbert space endowed with its inner product $\langle \cdot, \cdot \rangle_E$ and the associated norm $\norm{\cdot}_E$. When no confusion arises, we omit the subscript.
A family of operators 
$(P(t))_{t \ge 0} \subseteq \mathcal{L}(B_b(E))$
is called a \emph{generalized Mehler semigroup} if there exist a strongly continuous semigroup 
$(S(t))_{t \ge 0}$ on $E$ and a family of probability measures 
$(\mu_t)_{t \ge 0}$ on $E$ such that
\begin{equation}\label{generalized_mehler}
(P(t) f)(x) = \int_E f(S(t) x + y)d\mu_t(y),
\qquad f \in B_b(E),\ x \in E,\ t \ge 0,
\end{equation}
and satisfying the skew convolution equation
\begin{equation}\label{skew_conv}
\mu_{t+r} = (\mu_t \circ [S(r)]^{-1})*\mu_r, \qquad r,t \ge 0.
\end{equation}
Condition \eqref{skew_conv} is necessary and sufficient for $(P(t))_{t\geq 0}$ to be a semigroup and it says that $S(r)$ is a skew operator with respect to $(\mu_t,\mu_{t+r})$ for every $t,r> 0$. We recall that if for any $\xi\in E$
the function $t\mapsto\widehat{\mu_t}(\xi)$ is absolutely continuous on $[0,\infty)$ and differentiable
at $t=0$ then, setting
\[
\lambda(\xi):=-\frac{d}{dt}\widehat{\mu_t}(\xi)_{|_{t=0}},
\]
the function
$t\mapsto \lambda((S(t))^*\xi)$ belongs to $L^1_{\rm loc}((0,\infty))$ (see \cite[Lemma 2.6]{BRS96}), hence \eqref{skew_conv}
is equivalent to
\begin{align*}
\hat{\mu}_t(\xi)= \exp\left(-\int_0^t \lambda ((S(r))^* \xi)dr\right), \qquad t\ge 0,\ \xi\in E .
\end{align*}

In this case, the function $\lambda : E \to \C$ is a continuous negative definite function with $\lambda(0)=0$. It is called \emph{the characteristic exponent} of $P(t)$, and, assuming that it is Sazonov continuous (see \cite[Chapter 8.6]{Lin86}), it can be proved that it has an unique Lévy--Khintchine representation (see \cite[Theorem VI.4.10]{Par67}) as
\begin{equation}\label{lambda}\lambda(\xi)= -i\langle b, \xi \rangle + \frac{1}{2}\langle Q\xi, \xi \rangle + \int_E \Big( 1 - e^{i\langle \xi, y \rangle} + i\langle \xi, y \rangle \chi_{\{\|y\|\le1\}} \Big)d\nu(y),
\end{equation}
where $b\in E$, $Q$ is a nonnegative symmetric trace-class operator on $E$ and $\nu$ is a Lévy measure on $\mathcal{B}(E)$.
In the sequel we write $\lambda=[b,Q,\nu]$ to associate the triple $[b,Q,\nu]$ with $\lambda$ according to \eqref{lambda}. Even if $(P(t))_{t\geq 0}$ preserves $C_b(E)$ and $\|P(t)\|_{\mathcal{L}(C_b(E))}\le 1$ for every $t>0$, in general $P(t)$ is not strongly continuous in $C_b(E)$. However, the continuity of the map $(t,x)\mapsto (P(t)f)(x)$, for $f \in C_b(E)$ allows to define the weak generator $\mathcal{L}$ of $P(t)$ through its resolvent. The action of $\mathcal{L}$ on smooth functions $f \in \mathcal{F}C^2_b(E)$ is described by the formula
\begin{equation}\label{formula_L}
[\mathcal{L}f](x)=\frac{1}{2}{\rm Tr}(QD^2f(x))+
\langle Ax+b, D f(x)\rangle+\int_X[f(x+y)-f(x)-\gen{Df(x),y}\chi_{B_1}(y)]d\nu(y)
\end{equation}
where $A:D(A)\subseteq E\to E$ denotes the infinitesimal generator of $S(t)$.
Note that for regular functions, the integral in \eqref{formula_L} is
well defined by the Taylor expansion and the fact that $\nu$ is a L\'evy measure.
To go further, we recall some sufficient conditions provided in \cite[Theorem 3.1]{FR00} that guarantee the existence of a unique invariant measure for $P(t)$, namely a Borel probability measure $\mu$ on $E$ such that
\begin{align*}
\int_E P(t)f d\mu=\int_E f d\mu,\qquad\;\, t \ge 0,\, f \in B_b(E),
\end{align*}
or equivalently $\mu=(\mu\circ (S(t))^{-1})*\mu_t$ for every $t >0$ stating that $S(t)$ is a skew operator with respect to $\mu$, see \eqref{skew}.
If $\lambda=[b,Q,\nu]$ then $\mu_t$ is infinitely divisible for every $t>0$ with characteristic exponent described by the triple $[b_t,Q_t,\nu_t]$ equivalently $\mu_t=\delta_{b_t}*\gamma
(0,Q_t)*\nu_t$
where
\begin{equation}\label{b_t}
b_t:=\int_0^t S(r) b\, dr+\int_0^t\int_E S(r) x\Big(\chi_{B_1}(S(r) x)-\chi_{B_1}(x)\Big)\nu(dx)dr,
\end{equation}
\begin{equation}\label{Qt}
Q_t:= \int_0^t S(r)Q(S(r))^*dr
\end{equation}
and the L\'evy measures $\nu_t$ defined setting $\nu_t(\{0\})=0$ and
\begin{equation}\label{defM_t}
\nu_t(B):=\int_0^t \nu([S(r)]^{-1}(B))dr, \qquad B \in \mathcal{B}(E), 0\notin B.
\end{equation}
Sufficient conditions that ensure the existence of a unique invariant measure for $P(t)$ are stated here below.
\begin{hyp}\label{Hyp_FR}
Let $b\in E$, $Q$ a nonnegative symmetric trace-class operator on $E$ and $\nu$ a Lévy measure on $\mathcal{B}(E)$ such that $\lambda=[b,Q,\nu]$ and let $(S(t))_{t \ge 0}$ be a strongly continuous semigroup on $E$. Let $b_t, Q_t$ and $\nu_t$ defined as in \eqref{b_t}, \eqref{Qt} and \eqref{defM_t}. Assume that
\begin{enumerate}[(i)]
\item 
there exists $b_\infty:=\lim_{t \to \infty}b_t$ in $E$;
\item $\sup_{t>0}\operatorname{Tr} Q_t=:\operatorname{Tr} Q_\infty<\infty$;
\item 
setting $\nu_\infty:=\sup_{t>0} \nu_t$ (i.e., $\nu_\infty(\{0\})=0$ and
$\nu_\infty(B)=\int_0^\infty \nu([S(r)]^{-1}(B))dr$, $B\in \mathcal{B}(E)$, $0\notin B$), it holds that
\begin{equation*}
\int_E  \min\{1,\|y\|_E^2\}d\nu_\infty(y)<\infty;
\end{equation*}
\item $ \lim_{t \to \infty}S(t) x=0$ in $E$ for every $x \in E$.
\end{enumerate}
\end{hyp}

\begin{rmk}{\rm In \cite[Theorem 6.7]{Chojnowska87}, in the case when $S(t)$ is exponentially stable, the authors show that the condition 
\begin{equation}\label{log_Levy}
\int_{\{\|x\|_E\geq 1\}}\ln\|x\|_E\, d\nu(x)<\infty
\end{equation}
where $\nu$ is the L\'evy measure in \eqref{lambda}}, is sufficient to guarantee the existence of a unique invariant measure for $P(t)$.
\end{rmk}

\begin{thm}\label{sunto}
Under Hypotheses \ref{Hyp_FR}, the measure $\mu=\delta_{b_\infty}*\gamma(0,Q_\infty)*\nu_\infty$ is the unique invariant measure for $P(t)$. The semigroup $P(t)$ can be extended to a contractive strongly continuous semigroup $($still denoted by $P(t)$$)$ on $L^p(E, \mu)$ for any $1\le p<\infty$. 
\end{thm}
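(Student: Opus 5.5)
The plan is to verify that $\mu$ is invariant, then prove uniqueness, then transfer the semigroup to $L^p(E,\mu)$ using Theorem \ref{Lp}.

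\textbf{Step 1: convergence of $\mu_t$.} By Hypotheses \ref{Hyp_FR}(i)--(iii), the triples $[b_t,Q_t,\nu_t]$ converge to $[b_\infty,Q_\infty,\nu_\infty]$. I would show that the characteristic functions converge:
\[
\hat\mu_t(\xi)=\exp\Big(-\int_0^t\lambda(S(r)^*\xi)\,dr\Big)\to\hat\mu_\infty(\xi).
\]
The limit $\hat\mu_\infty$ is the characteristic function of $\mu=\delta_{b_\infty}*\gamma(0,Q_\infty)*\tilde e_s(\nu_\infty)$, which is a genuine infinitely divisible Radon measure. Its Gaussian part is trace class by (ii), and $\nu_\infty$ is a L\'evy measure by (iii). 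Tightness of $(\mu_t)$ should come from the trace bound on $Q_t$ together with the integrability condition (iii), via the standard L\'evy--Khinchine tightness criteria in Hilbert spaces. This gives $\mu_t\to\mu$ weakly.

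\textbf{Step 2: invariance.} Let $t\to\infty$ in the skew convolution equation \eqref{skew_conv} with $r$ fixed:
\[
\mu_{t+r}=(\mu_t\circ S(r)^{-1})*\mu_r.
\]
Since $S(r)$ is continuous, the right-hand side converges weakly to $(\mu\circ S(r)^{-1})*\mu_r$. Hence $\mu=(\mu\circ S(r)^{-1})*\mu_r$, so each $S(r)$ is a skew operator with respect to $\mu$ with skew factor $\mu_r$.

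\textbf{Step 3: uniqueness.} Let $\sigma$ be any invariant probability measure. Then
\[
\hat\sigma(\xi)=\hat\sigma(S(t)^*\xi)\,\hat\mu_t(\xi).
\]
By (iv), $S(t)x\to0$ for every $x$, so $\sigma\circ S(t)^{-1}\to\delta_0$ weakly by dominated convergence. It follows that $\hat\sigma(S(t)^*\xi)\to1$, and therefore $\hat\sigma=\hat\mu_\infty$, i.e.\ $\sigma=\mu$.

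\textbf{Step 4: extension to $L^p$.} Invariance together with Theorem \ref{Lp} (note $\hat\mu\neq0$ since $\mu$ is infinitely divisible) shows that each $P(t)$ extends to a contraction on $L^p(E,\mu)$. The semigroup law is inherited from $B_b(E)$ by density.

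\textbf{Step 5: strong continuity.} For $f\in C_b(E)$, the map $(t,x)\mapsto P(t)f(x)$ is continuous and bounded, so $P(t)f\to f$ pointwise as $t\to0$. Dominated convergence then gives convergence in $L^p(E,\mu)$. Since $C_b(E)$ is dense in $L^p(E,\mu)$ and the $P(t)$ are contractions, strong continuity extends to all of $L^p(E,\mu)$.

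\textbf{Main obstacle.} The hardest step is Step 1: the precise identification and tightness of the limit measure, including the drift correction inside $b_t$ and the convergence of $\tilde e_s(\nu_t)$ to $\tilde e_s(\nu_\infty)$. Here I would rely on \cite[Theorem 3.1]{FR00} rather than redo the tightness argument.
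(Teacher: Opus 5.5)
Your proposal is correct and follows the same route as the paper, which gives no independent proof: invariance and uniqueness are imported from \cite[Theorem 3.1]{FR00}, exactly where you defer the hard step, and $L^p$ contractivity comes from the skew-operator estimate of Theorem \ref{Lp}; your Steps 2--5 supply the standard details behind that citation. As a side remark, the tightness you flag as the main obstacle is not needed, because Steps 2 and 3 go through at the level of characteristic functions: pointwise convergence $\hat\mu_t\to\hat\mu$ follows directly from Hypotheses \ref{Hyp_FR}(i)--(iii), and $\mu$ exists because $[b_\infty,Q_\infty,\nu_\infty]$ is an admissible L\'evy--Khinchine triple on the Hilbert space $E$.
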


Note that in this case Hypothesis \ref{hyp:skew-id} is satisfied with $\gamma:=\gamma(0, Q_\infty)$, $\pi:=\delta_{b_\infty}*\nu_\infty$ and $T=S(t)$, for every $t>0$.
Consequently Theorems \ref{Lp} and \ref{thm_hyper}  apply and the consequences stated in Section \ref{consequences} can be deduced. In particular Theorem \ref{thm_hyper} can be reformulated as follows.

\begin{thm}\label{hyper_appl}
Assume that Hypotheses \ref{Hyp_FR} are satisfied. Then, for every $t>0$, $p \in (1,\infty)$, $q \in [p,1+(p-1)\|S(t)_{|_H}\|^{-2}_{\mathcal{L}(H)}]$  and  $f \in \mathcal{X}_{p,q}(E;\gamma, \pi)$, $P(t)f$ belongs to $L^q(E, \gamma*\pi)$ and
\begin{equation}\label{est-qpq_semigruppo}
\|P(t) f\|_{L^q(E, \gamma*\pi)}\le \|f\|_{p,q},\qquad\;\, t>0.
\end{equation}
Moreover, if $r\in (1,\infty)$ and $f \in L^r(E, \gamma*\pi)$ then $P(t) f\in X_{p,r}(\gamma,\pi)$ for every $t>0$ and $p\in [r,1+(r-1)\|S(t)_{|_H}\|^{-2}_{\mathcal{L}(H)}]$. Moreover 
\begin{equation}\label{idea_davide_semigruppo}
\|P(t) f\|_{p,r}\le \|f\|_{L^r(E,\gamma*\pi)}.
\end{equation}
\end{thm}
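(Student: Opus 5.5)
The plan is to deduce Theorem \ref{hyper_appl} from Theorem \ref{thm_hyper}, applied for each fixed $t>0$ with $T=S(t)$. Everything therefore comes down to checking that Hypothesis \ref{hyp:skew-id} holds with $\gamma=\gamma(0,Q_\infty)$, $\pi=\delta_{b_\infty}*\nu_\infty$ (read as $\delta_{b_\infty}*\tilde e_s(\nu_\infty)$ in the notation of Section \ref{sect_MainResults}) and $\rho=\mu_t$. We also need the hypotheses behind Proposition \ref{krelly}, so that the splitting $\rho=\rho_\gamma*\rho_\pi$ used in Propositions \ref{prop2}, \ref{pro_girata} and Lemma \ref{ext_p} is available. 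Once this is done, \eqref{est-qpq_semigruppo} and \eqref{idea_davide_semigruppo} are exactly \eqref{est-qpq} and \eqref{idea_davide} with $P_T=P(t)$ and $\|T_{|_H}\|=\|S(t)_{|_H}\|_{\mathcal L(H)}$.

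The steps are as follows.

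First, by Theorem \ref{sunto}, $\mu=\gamma*\pi$ is the invariant measure. Invariance means $(\mu\circ S(t)^{-1})*\mu_t=\mu$, so $S(t)\in\mathcal L(E)$ is a skew operator with respect to $\gamma*\pi$ with skew factor $\mu_t$. The nonvanishing of $\hat\mu$ follows from infinite divisibility (Remark after Definition \ref{4.1}).

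Second, the factor $\mu_t=\delta_{b_t}*\gamma(0,Q_t)*\tilde e_s(\nu_t)$ is infinitely divisible, so condition (i) of Proposition \ref{krelly} holds. We then obtain $\rho_\gamma=\gamma(0,Q_t)$ and $\rho_\pi=\delta_{b_t}*\tilde e_s(\nu_t)$. As a sanity check, these can be identified directly at the level of characteristic functions. From $Q_\infty=S(t)Q_\infty S(t)^*+Q_t$ we get $(\gamma\circ S(t)^{-1})*\gamma(0,Q_t)=\gamma$. From $\nu_\infty=\nu_\infty\circ S(t)^{-1}+\nu_t$ and $b_\infty=S(t)b_\infty+b_t$ (up to the compensator correction in \eqref{b_t}) we get the analogous identity for $\pi$.

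Third, with $T=S(t)$ and $P_T=P(t)$ (since \eqref{generalized_mehler} coincides with \eqref{P_T} for $\rho=\mu_t$), we invoke Theorem \ref{thm_hyper}. This gives, for $f\in\mathcal X_{p,q}(E;\gamma,\pi)$ and $q$ in the stated range, that $P(t)f\in L^q(E,\gamma*\pi)$ together with \eqref{est-qpq_semigruppo}; the membership follows because the estimate holds on the dense set $B_b(E)$ and extends by continuity. The second part, $P(t)f\in\mathcal X_{p,r}$ together with \eqref{idea_davide_semigruppo}, is \eqref{idea_davide} verbatim.

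The main obstacle I expect is the bookkeeping in the second step, namely making sure the Gaussian/non-Gaussian splitting of $\mu_t$ matches the splitting of the invariant measure in the sense of Proposition \ref{krelly}. The delicate point is the drift terms, because the truncation $\chi_{B_1}$ does not commute with $S(r)$. I would first try to avoid this entirely by appealing to uniqueness of the Lévy–Khintchine triple. Proposition \ref{krelly} already guarantees that the factor is infinitely divisible and that its factors are unique, so the explicit identification of $\rho_\gamma$ and $\rho_\pi$ is not even needed for the estimates.
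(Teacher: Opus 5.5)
Your proposal is correct and follows the paper's argument: the paper notes that Hypothesis \ref{hyp:skew-id} holds with $\gamma=\gamma(0,Q_\infty)$, $\pi=\delta_{b_\infty}*\nu_\infty$ and $T=S(t)$ (the skew factor being $\mu_t$, by invariance), and then reads off \eqref{est-qpq} and \eqref{idea_davide} from Theorem \ref{thm_hyper} with $P_T=P(t)$. Your additional check that $\mu_t$ is infinitely divisible, so that Proposition \ref{krelly} supplies the splitting $\rho=\rho_\gamma*\rho_\pi$, fills in a step the paper leaves implicit, and you are right that the explicit identification of $\rho_\gamma$ and $\rho_\pi$ (with its compensator bookkeeping) is not needed for the estimates.
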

\begin{rmk}{\rm Note that in the Gaussian case ($\pi=\delta_0$), inequalities \eqref{est-qpq_semigruppo} and \eqref{idea_davide_semigruppo} are equivalent and state that the Ornstein-Uhlenbeck semigroup $R(t)$, introduced in \eqref{OU_Classical}, is hypercontractive, i.e. for every $p>1$ and $t>0$, $R(t)$ maps $L^p(E, \gamma)$ into $L^q(E, \gamma)$ for $q \in [p,1+(p-1)\|S(t)_{|_H}\|^{-2}_{\mathcal{L}(H)}]$ and
\begin{equation}\label{hyp_classica}
\|R(t)f\|_{L^q(E, \gamma)}\le \|f\|_{L^p(E, \gamma)}, \qquad\;\, t>0, f \in L^p(E,\gamma).
\end{equation}}
\end{rmk}

Now, we provide two examples of generalized Mehler semigroups introduced in \cite{RW03} that allow us to comment our hypercontractive type estimates. As they provide key insights pertinent to our results, we revisit the first of them in details. For the second one we refer to \cite[Example 3.3]{RW03}.

\begin{example}\label{example_1_Wang}
Let $E = \R^n$, $S(t) = e^{-\beta t}\mathrm{Id}_{\R^n}$ for every $t>0$ and some $\beta>0$. For every $\xi \in \R^n$ we define 
\(
\lambda(\xi) := \delta \|\xi\|^2 + \|\xi\|^\alpha,
\)
for some $\delta \ge 0$, and $\alpha \in (0,2)$. 
The associated generalized Mehler semigroup is given by
\[
[P(t) f](x) := \int_{\R^n} f(e^{-\beta t}x + y)\, d\mu_t(y), 
\qquad f \in B_b(\R^n),
\]
where
\begin{align*}
        \hat{\mu_t}(\xi)={\rm exp}\left[-\frac{1}{\alpha\beta}(1-e^{-\alpha\beta t})\norm{\xi}^\alpha-\frac{\delta}{2\beta}(1-e^{-2\beta t})\norm{\xi}^2\right].
    \end{align*}
In this framework, \cite[Theorem 3.1]{FR00} yields that there exists a unique invariant measure $\mu$ for $P(t)$ such that its characteristic function satisfies the equality
    \begin{align*}
        \hat{\mu}(\xi)={\rm exp}\left[-\frac{1}{\alpha\beta}\norm{\xi}^\alpha-\frac{\delta}{2\beta}\norm{\xi}^2\right].
    \end{align*} 
    
Consequently, Theorem \ref{hyper_appl} can be applied and estimates \eqref{est-qpq_semigruppo} and \eqref{idea_davide_semigruppo} are satisfied with $\gamma$ equal to the Gaussian measure with characteristic function 
\(
\widehat{\gamma}(\xi) = \exp\left(-(2\beta)^{-1}\delta\|\xi\|^2\right)
\)
and $\pi$ the infinitely divisible measure with characteristic function 
\(
\widehat{\pi}(\xi) = \exp\left(-(\alpha\beta)^{-1}\|\xi\|^\alpha\right).
\)
However, for every $t>0$ and $1<p<q<\infty$, it holds
\begin{equation}\label{nohyp}
\|P(t)\|_{\mathcal{L}(L^q(\gamma * \pi), L^p(\gamma * \pi))} = \infty.
\end{equation}
Indeed, following the arguments of \cite[Example 3.2]{RW03}, it holds that both $\mu_t$ and $\mu$ are absolutely continuous with respect to the $n$-dimensional Lebesgue measure and there exists a positive constant $C$ such that 
    \begin{align}\label{oax}
        \frac{1}{C(1+\norm{x}^{n+\alpha})}\leq\eta_t(x):=\frac{d\mu_t}{dx}(x)\leq\frac{C}{1+\norm{x}^{n+\alpha}},\qquad x\in\R^n.
    \end{align}
    The same estimate holds for $\eta:=\frac{d\mu}{dx}$. Let $\alpha<p$ and for $\eps\in(\frac{\alpha}{q},\frac{\alpha}{p})$, consider the function $f(x)=\norm{x}^\eps$. Simple calculations give that $f\in L^p(\R^n,\mu)$, and by \eqref{oax} it holds for every $x\in\R^n$ with $\|x\|\geq 2e^{t\beta}$
    \begin{align*}
        [P(t)f](x) &=\int_{\R^n}\|e^{-\beta t}x+y\|^\eps\eta_t(y)dy\geq \frac{1}{C}\int_{\R^n}\frac{\|e^{-\beta t}x+y\|^\eps}{1+\norm{y}^{n+\alpha}}dy\\
        &\geq \frac{1}{2C} \int_{\{\norm{y}\leq 1\}}\big|e^{-\beta t}\norm{x}-\norm{y}\big|^\eps dy.
    \end{align*}
   Using now the $n$-dimensional spherical coordinates we get that there exists a positive constant $K_n$, depending only on the dimension $n$, such that 
   \begin{align*}
       [P(t)f](x) &\geq \frac{1}{2C} \int_{\{\norm{y}\leq 1\}}\big|e^{-\beta t}\norm{x}-\norm{y}\big|^\eps dy= \frac{K_n}{2C} \int_0^1\big|e^{-\beta t}\norm{x}-\rho\big|^\eps \rho^{n-1} d\rho\\
       &= \frac{K_n}{2C} \int_0^1\big(e^{-\beta t}\norm{x}-\rho\big)^\eps \rho^{n-1} d\rho\geq \frac{K_n}{2C}(e^{-\beta t}\norm{x}-1)^\eps\int_0^1\rho^{n-1}d\rho\\
        &\geq \frac{K_n}{2nC}\left(\frac{1}{2}e^{-\beta t}\norm{x}\right)^\eps.
   \end{align*}
   So we get
   \begin{align*}
       \int_{\R^n}|[P(t)f](x)|^qd\mu(x)\geq \frac{1}{C}\left(\frac{K_n e^{-\eps\beta  t}}{2^{\eps+1}nC}\right)^q\int_{\{\max\{1,\norm{x}\}\geq 2e^{\beta t}\}}\frac{\norm{x}^{q\eps}}{1+\norm{x}^{n+\alpha}}dx.
   \end{align*}
   Again using the $n$-dimensional spherical change of variable we get
   \begin{align*}
       \int_{\R^n}|[P(t)f](x)|^qd\mu(x) &\geq \frac{K_n}{C}\left(\frac{K_n e^{-\eps\beta  t}}{2^{\eps+1}nC}\right)^q\int_{\max\{1, 2e^{\beta t}\}}^{\infty}\frac{\rho^{q\eps}}{1+\rho^{n+\alpha}}\rho^{n-1}d\rho\\
       &\geq \frac{K_n}{2C}\left(\frac{K_n e^{-\eps\beta  t}}{2^{\eps+1}nC}\right)^q\int_{\max\{1,2e^{\beta t}\}}^{\infty}\frac{1}{\rho^{1+\alpha-q\eps}}d\rho,
   \end{align*}
   the last integral diverges due to our choice of $\alpha$ and $\eps$. Estimate \eqref{nohyp} shows how, in this case, the results of Theorem \ref{thm_hyper} cannot be strengthened to yield the classical hypercontractivity for $P(t)$. 
The presence of the non-Gaussian component in $\lambda$ prevents the semigroup from exhibiting the integrability-improving behaviour typical of the purely Gaussian case.
\end{example}

To conclude this section we compare the results in Theorem \ref{hyper_appl} with the unique (to the best of our knowledge) hypercontractivity result for generalized Mehler semigroups (see \cite[Theorem 1.5]{RW03} and \cite[Theorem 5.9]{OR16} for a generalization). In  \cite[Theorem 1.5]{RW03} the authors assume that, for every $x\in E$, the measure $\mu_t\circ\theta^{-1}_{S(t)x}$ is absolutely continuous with respect to $\mu_t$ with density in $L^{p'}(E,\mu_t)$ for every $t>0$. Here $\theta_z(y)=y+z$, $y,z\in E$ and $p'$ denotes the conjugate exponent of $p$. Setting
\begin{align*}
    \Phi_{t,p'}(x)=\left\|\frac{d(\mu_t\circ\theta^{-1}_{S(t)x})}{d\mu_t}\right\|_{L^{p'}(E,\mu_t)},
\end{align*}
and assuming that there exists $\varepsilon>0$ such that
\begin{align}\label{condizione_Wang}
    C(t,p',\varepsilon)
    :=\int_E\left(\frac{1}{\int_E[\Phi_{t,p'}(x-y)]^{-p}d(\gamma*\pi)(y)}\right)^{1+\varepsilon}
    d(\gamma*\pi)(x)<\infty,
\end{align}
they prove that
\begin{align}\label{hyp-wang}
    \|P(t)f\|_{L^{p(1+\varepsilon)}(E,\gamma*\pi)}
    \le [C(t,p',\varepsilon)]^{\frac{1}{p(1+\varepsilon)}}\,
    \|f\|_{L^p(E,\gamma*\pi)}.
\end{align}
Clearly, this result is meaningless if \eqref{condizione_Wang} is not satisfied as, for instance, in Example \ref{example_1_Wang} and this prevents from deducing any summability improvement with respect to $\gamma*\pi$ by the action of $P(t)$ itself.  On the other hand, if \eqref{condizione_Wang} is satisfied and then estimate \eqref{hyp-wang} holds true, we point out that for large $t$, estimate \eqref{idea_davide_semigruppo} is  stronger than estimate \eqref{hyp-wang}. Indeed by \eqref{inclusion} we deduce that
\begin{align*}
    \mathcal{X}_{p(t),p}(E;\gamma,\pi)
    \subseteq L^{p(t)}(E,\gamma*\pi)
    \subseteq L^{p(1+\varepsilon)}(E,\gamma*\pi),
\end{align*}provided that 
\begin{align}\label{cond_t}
    t\ge \frac{1}{\omega}\ln\!\left(\frac{M^2(p(1+\varepsilon)-1)}{p-1}\right)
\end{align}
where $M\ge 1$ and $\omega\ge 0$ are such that $\|S(t)_{|_H}\|_{\mathcal{L}(H)}\le Me^{-\omega t}$ for every $t\ge 0$.
Indeed, recalling that
$p(t)=1+(p-1)\|S(t)_{|_H}\|_{\mathcal{L}(H)}^{-2}$, and that $\gamma*\pi$ is a probability measure, the inclusion $L^{p(t)}(E,\gamma*\pi)
    \subseteq L^{p(1+\varepsilon)}(E,\gamma*\pi)$ holds true if $p(t)\ge p(1+\varepsilon)$ or equivalently if
\begin{align*}
    \|S(t)_{|_H}\|_{\mathcal{L}(H)}^2
    \le \frac{p-1}{p(1+\varepsilon)-1}.
\end{align*}
which is clearly satisfied if \eqref{cond_t} is satisfied. 

Therefore, for sufficiently large $t$, Theorem \ref{hyper_appl} yields a sharper inclusion estimate than \cite[Theorem 1.6]{RW03}, whereas for small values of $t$, estimate \eqref{hyp-wang} is more precise.

The hypercontractivity estimate \eqref{hyp-wang} is satisfied in the following example introduced in \cite[Example 3.3]{RW03}.  
\begin{example}\,
    Let $E=\R^n$, $S(t)=e^{-\beta t}{\rm Id}_{\R^n}$, and $\lambda(\xi):=\delta\norm{\xi}^2+2\beta\sum_{i=1}^n\frac{\xi_i^2}{1+\xi_i^2}$, where $\beta,\delta>0$. The generalized Mehler semigroup 
    \begin{align*}
        [P(t)f](x):=\int_{\R^n}f(e^{-\beta t}x+y)d\mu_t(y),\qquad f\in B_b(\R^n),
    \end{align*}
    has a unique invariant measure $\mu$ such that
    $\hat{\mu}(\xi)= \frac{e^{-\delta\|\xi\|^2/(2\beta)}}{(1+\xi_1^2)(1+\xi_2^2)\cdots(1+\xi_n^2)}$. In this case, condition \eqref{condizione_Wang} holds true whenever $(1+\varepsilon)e^{-t\beta}<1$. Thus, estimate \eqref{hyp-wang} holds true for any $p\geq 1$, $t>0$ and $\eps\in(0,e^{\beta t}-1)$.
    Note that for $p>1$ and $t > \frac{1}{\beta}\ln \left(1+\frac{p\varepsilon}{p-1}\right)$, estimate \eqref{idea_davide_semigruppo} is stronger than the hypercontractity one \eqref{hyp-wang}.
\end{example}

\section{Logarithmic Sobolev type inequalities}

In this section we derive some functional inequalities of integral type with respect to invariant measures associated with generalized Mehler semigroups. The occurrence of such inequalities is related to the summability improving property of the semigroup itself. More precisely, we are interested to the case when estimate \eqref{idea_davide_semigruppo} holds true. As it is well known, in the Gaussian case the hypercontractivity of the Ornstein--Uhlenbeck semigroup in the $L^p$-spaces related to its invariant measure $\gamma$, which is a Gaussian measure, is equivalent to the occurrence of the logarithmic Sobolev inequality  
\begin{equation}\label{log-sob}
\int_E |f|^p\ln|f|d\gamma-\left(\int_E|f|^pd\gamma\right) \ln\left(\int_E|f|^p d\gamma\right)\le c_p\int_E|f|^{p-2}\|\nabla_H f\|^2d\gamma
\end{equation}
which holds true for any $p\geq 1$, $f$ regular enough with positive infimum and some $c_p>0$. Here $\nabla_H$ denotes the gradient along the Cameron--Martin space (we refer to \cite{BFFZ24}, for its definition and the main properties). Such inequalities are the counterpart of the Sobolev embeddings which fail in the Gaussian case (see, for example, \cite[Example at page 1074]{Gross75}). Indeed, as it is well known, the inequality \eqref{log-sob} can be interpreted as the embedding of $W^{1,2}(E, \mu)$ in the Orlicz space $L^2\ln L^2(E, \gamma)$, see \cite{Feissner1975,Gross75} for details.

In \cite{AFP23} we derived some modified logarithmic Sobolev inequalities with respect to  invariant measures associated with generalized Mehler semigroups. In such inequalities, differently from the classical case where the entropy of a smooth enough function is estimated by the integral of its gradient as in \eqref{log-sob}, an extra additional term depending on some increments of the function itself appears in the right-hand side. This modification is due to the nonlocal effects of the related generator and provided some integrability property of exponential functions with respect to the invariant measure, in the spirit of the Fernique's theorem, see \cite{BobLed}. However, in such setting we failed to prove even a variant of the classical equivalence result between the hypercontractivity of the semigroup and the modified logarithmic Sobolev inequality obtained.  

Here, we aim at finding a connection between the hypercontractivity type estimate \eqref{idea_davide_semigruppo} which holds true for generalized Mehler semigroups and some logarithmic Sobolev type inequalities with respect to its invariant measure. In this section we assume the following assumptions.

\begin{hyp}\label{Hyp_Mehler}
Let $b\in E$, $Q$ a nonnegative symmetric trace-class operator on $E$ and $\nu$ a Lévy measure on $\mathcal{B}(E)$ such that $\lambda=[b,Q,\nu]$ and let $(S(t))_{t \ge 0}$ be a strongly continuous semigroup on $E$. Assume further that
\begin{enumerate}[(i)] 
\item $A:D(A)\subseteq E\rightarrow E$ is the infinitesimal generator of $(S(t))_{t\geq 0}$ satisfying $\langle A\xi,\xi\rangle<0$ for any $\xi\in D(A)$ with $\xi\neq 0$;
\item there exists an orthonormal basis $\{h_n\,|\, n\in \N\}$ of $E$ consisting of eigenvectors of $A^*$;
\item it holds that
\[\int_{\{\|x\|_E\geq 1\}}\|x\|_E\, d\nu(x)<\infty.\]
\end{enumerate}
\end{hyp}

Note that Hypotheses \ref{Hyp_Mehler} imply that $S(t)$ is exponentially stable and condition in \eqref{log_Levy} is satisfied. Consequently there exists a unique invariant measure for $P(t)$ satisfying Theorem \ref{sunto}.

Following \cite{App07} (see also \cite[Remark 5.11]{PT16}), we say that $f\in C^2_A(E)$ if $f\in C_b(E)\cap C^2(E)$ its first and second order derivatives are uniformly bounded and uniformly continuous on
bounded subsets of $E$, $(Df)(E)\subseteq D(A^*)$ and
$x\mapsto\langle x,A^*Df(x) \rangle\in C_b(E)$. We say $F\in \mathcal{F}C^2_A(E)$ if there exist
$n\in\N$ and $f\in C^2_A(\R^n)$ such that
\begin{align*}
F(x)=f(\gen{x,h_1},\ldots,\gen{x,h_n}),\qquad x\in E.
\end{align*}

\noindent The hypotheses above assure the validity of Hypothesis \ref{Hyp_FR} and consequently the  existence of a unique probability invariant measure (see \cite{CMG96,FR00}) that is the convolution of a Gaussian measure $\gamma$ (with mean $b_\infty$
and covariance operator $Q_\infty$)
and an infinitely divisible (non-Gaussian) probability measure $\pi$ with Lévy measure $\nu_\infty$ defined as in Hypothesis \ref{Hyp_FR}(iii).

Moreover the following result can be proved. See \cite[Theorem 5.2]{App07}, \cite[Section 3]{FR00} and \cite[Remark 5.11]{PT16} for more details.

\begin{thm}\label{4}
The set of functions $\mathcal{F}C^2_A(E)$ is invariant for $P(t)$ and it is a core for $\mathcal{L}$ $($see \eqref{formula_L}$)$ in $L^2(E, \gamma*\pi)$. Moreover
\begin{equation}\label{inv_1}\int_E \mathcal{L}f d(\gamma*\pi)=0, \qquad\;\, f \in \mathcal{F}C^2_A(E).
\end{equation}

\end{thm}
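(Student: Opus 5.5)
The plan is to reduce everything to a finite-dimensional computation, using the eigenbasis from Hypothesis \ref{Hyp_Mehler}(ii), and then to derive the core property and \eqref{inv_1} from standard semigroup theory. We work with a cylindrical function $F(x)=f(\gen{x,h_1},\ldots,\gen{x,h_n})$, where $f\in C^2_A(\R^n)$. Let $A^*h_i=\lambda_i h_i$. Since $E$ is real and $\gen{A\xi,\xi}<0$, each $\lambda_i$ is real and negative, so $S(t)^*h_i=e^{\lambda_i t}h_i$ and $\gen{S(t)x,h_i}=e^{\lambda_i t}\gen{x,h_i}$.

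\textbf{Invariance.} Write $\Pi x=(\gen{x,h_1},\ldots,\gen{x,h_n})$, $D_t=\mathrm{diag}(e^{\lambda_i t})$ and $\sigma_t:=\mu_t\circ\Pi^{-1}$. Then $P(t)F(x)=g_t(\Pi x)$ with
\[
g_t(\xi)=\int_{\R^n}f(D_t\xi+\eta)\,d\sigma_t(\eta).
\]
Differentiating under the integral shows that $g_t\in C^2_b(\R^n)$, with derivatives uniformly continuous on bounded sets. It remains to check that $\xi\mapsto\sum_i\lambda_i\xi_i\partial_ig_t(\xi)$ is bounded. We write
\[
e^{\lambda_i t}\xi_i=(D_t\xi+\eta)_i-\eta_i .
\]
This splits the expression into two terms. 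The first is $\int\gen{z,A^*Df(z)}|_{z=D_t\xi+\eta}\,d\sigma_t$, which is bounded because $f\in C^2_A(\R^n)$. The second is bounded by $\|Df\|_\infty\int\|\eta\|\,d\sigma_t$, which is finite because Hypothesis \ref{Hyp_Mehler}(iii) gives $\mu_t$ (hence $\sigma_t$) a finite first moment.

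\textbf{Key identity.} Next we show $F\in D(\mathcal L_2)$, the domain of the $L^2(E,\gamma*\pi)$ generator, and that $\mathcal L_2 F$ is given by \eqref{formula_L}. The projected process $\Pi Z(t,x)$ solves an $n$-dimensional linear SDE with drift $\mathrm{diag}(\lambda_i)$ driven by the Lévy process $\Pi Y$, whose characteristics are $[\Pi b,\Pi Q\Pi^*,\nu\circ\Pi^{-1}]$. Itô's formula for Lévy-driven SDEs in $\R^n$ then gives, pointwise,
\begin{equation*}
P(t)F(x)-F(x)=\int_0^t P(s)\mathcal LF(x)\,ds .
\end{equation*}
The hypotheses on $f$ make $\mathcal LF\in C_b(E)$: the drift term is bounded by the $C^2_A$ condition, the diffusion term is bounded since $Q$ is trace class, and the jump term is bounded by a Taylor expansion together with $\int\min\{1,\|y\|^2\}d\nu<\infty$. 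Hence $t^{-1}(P(t)F-F)$ is uniformly bounded and converges pointwise to $\mathcal LF$, by continuity of $(s,x)\mapsto P(s)\mathcal LF(x)$. Dominated convergence upgrades this to convergence in $L^2(E,\gamma*\pi)$, which gives $F\in D(\mathcal L_2)$ with the stated action.

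\textbf{Core and \eqref{inv_1}.} $\mathcal{F}C^2_A(E)$ is dense in $L^2(E,\gamma*\pi)$, since smooth cylindrical functions along an orthonormal basis are dense. It is $P(t)$-invariant and contained in $D(\mathcal L_2)$. The standard core criterion for strongly continuous semigroups (Engel--Nagel, Prop.~II.1.7) therefore gives the core property. Finally, integrate the key identity with respect to $\gamma*\pi$ and use the invariance $\int P(s)g\,d(\gamma*\pi)=\int g\,d(\gamma*\pi)$ together with Fubini. This yields $t\int\mathcal LF\,d(\gamma*\pi)=0$ for every $t>0$, which is \eqref{inv_1}.

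The main obstacle is the key identity. It requires justifying Itô's formula, or equivalently differentiating $t\mapsto\hat\sigma_t$, with unbounded drift. In particular the first-moment assumption is needed to control the compensator term and $\gen{x,A^*DF}$ uniformly in $x$. If the Itô route gets messy, I would instead verify the identity on exponentials $e^{i\gen{\xi,\Pi x}}$ via the explicit characteristic function and pass to $f$ by Fourier approximation.
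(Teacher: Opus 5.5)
Your argument is correct. The paper gives no proof of Theorem \ref{4} of its own; it defers to \cite[Theorem 5.2]{App07}, \cite[Section 3]{FR00} and \cite[Remark 5.11]{PT16}. Your proposal is a sound, self-contained version of the standard route those references point to: invariance of $\mathcal{F}C^2_A(E)$ via the first-moment condition in Hypothesis \ref{Hyp_Mehler}(iii), inclusion in the $L^2(E,\gamma*\pi)$-domain with action \eqref{formula_L}, density, the Engel--Nagel core criterion, and \eqref{inv_1} from the invariance of $\gamma*\pi$.

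The only slip is cosmetic. The projected L\'evy process $\Pi Y$ has drift $\Pi b$ plus a finite correction, because the truncation $\chi_{B_1}(y)$ is replaced by $\chi_{\{|\Pi y|\le 1\}}$. This is harmless, since you compute $\mathcal{L}F$ directly from \eqref{formula_L}.
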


We begin with a preliminary result.

\begin{lemma}
For every $f\in \mathcal{F}C^2_A(E)$ and
every $\Phi\in C^2(\R)$ we have
\begin{align}
\int_E\Phi'(f) & \mathcal{L}f d(\gamma*\pi)\notag\\
&=\int_E\int_E \Big[\Phi(f(x))-\Phi(f(x+y))+\Phi'(f(x))\Big(f(x+y)-f(x)\Big)\Big]d\nu(y)d(\gamma*\pi)(x)\notag\\
&-\frac{1}{2}\int_E\Phi''(f)\langle QDf,Df\rangle d(\gamma*\pi)\label{form_eu}
\end{align}
\end{lemma}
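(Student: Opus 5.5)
The natural route is a chain-rule computation for $\mathcal{L}$ applied to $\Phi(f)$, combined with the invariance identity \eqref{inv_1}. I would first check that $\Phi\circ f$ is an admissible test function. If $f(x)=g(\langle x,h_1\rangle,\dots,\langle x,h_n\rangle)$ with $g\in C^2_A(\R^n)$, then $\Phi\circ f$ has the same cylindrical structure with $\Phi\circ g$. Since $g$ is bounded, only the values of $\Phi$ on the bounded set $g(\R^n)$ matter, so $\Phi$, $\Phi'$, $\Phi''$ may be treated as bounded and uniformly continuous there. Then $D(\Phi\circ g)=\Phi'(g)Dg$ takes values in $D(A^*)$, and $x\mapsto\langle x,A^*D(\Phi\circ f)(x)\rangle=\Phi'(f(x))\langle x,A^*Df(x)\rangle$ is bounded and continuous. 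Hence $\Phi\circ f\in\mathcal{F}C^2_A(E)$, and \eqref{inv_1} gives $\int_E\mathcal{L}(\Phi\circ f)\,d(\gamma*\pi)=0$.

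Next I would compute $\mathcal{L}(\Phi\circ f)$ term by term from \eqref{formula_L}, using
\[
D(\Phi(f))=\Phi'(f)Df,\qquad D^2(\Phi(f))=\Phi'(f)D^2f+\Phi''(f)\,Df\otimes Df.
\]
This yields:
\begin{itemize}
\item the trace part $\tfrac12\Phi'(f){\rm Tr}(QD^2f)+\tfrac12\Phi''(f)\langle QDf,Df\rangle$;
\item the drift part $\Phi'(f)\langle Ax+b,Df\rangle$;
\item the jump part $\int_E[\Phi(f(x+y))-\Phi(f(x))-\Phi'(f(x))\langle Df(x),y\rangle\chi_{B_1}(y)]\,d\nu(y)$.
\end{itemize}
In the jump part I would add and subtract $\Phi'(f(x))(f(x+y)-f(x))$. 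This isolates $\Phi'(f(x))$ times the jump part of $\mathcal{L}f$, plus the remainder
\[
\int_E\bigl[\Phi(f(x+y))-\Phi(f(x))-\Phi'(f(x))(f(x+y)-f(x))\bigr]\,d\nu(y).
\]
Collecting terms gives the pointwise identity
\[
\mathcal{L}(\Phi(f))=\Phi'(f)\mathcal{L}f+\tfrac12\Phi''(f)\langle QDf,Df\rangle+\int_E\bigl[\Phi(f(\cdot+y))-\Phi(f)-\Phi'(f)(f(\cdot+y)-f)\bigr]\,d\nu(y).
\]
Integrating against $\gamma*\pi$ and using $\int_E\mathcal{L}(\Phi(f))\,d(\gamma*\pi)=0$ produces \eqref{form_eu} after rearranging signs.

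The main obstacle is justifying the splitting of the jump integral and the integrability needed to integrate each piece separately. The original integrand is $\nu$-integrable by Taylor's formula. The remainder integrand is bounded by $\tfrac12\sup|\Phi''|\,|f(x+y)-f(x)|^2$, which is $O(\|y\|^2)$ near $0$ with a constant uniform in $x$ (since $Df$ is bounded) and bounded for $\|y\|\ge1$. Because $\nu$ is a Lévy measure, this remainder is integrable, uniformly in $x$. Hence the split is legitimate, every term is bounded in $x$, and Fubini applies when integrating against $\gamma*\pi$.
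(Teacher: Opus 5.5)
Your proposal is correct and follows the same route as the paper. Like the paper, you observe that $\Phi\circ f\in\mathcal{F}C^2_A(E)$, expand $\mathcal{L}(\Phi\circ f)$ by the chain rule, add and subtract $\Phi'(f(x))\bigl(f(x+y)-f(x)\bigr)$ in the jump term, and integrate using the invariance identity \eqref{inv_1}; your additional checks of the admissibility of $\Phi\circ f$ and of the $\nu$-integrability of the remainder, uniformly in $x$, supply details the paper leaves implicit.
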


\begin{proof}
First of all, we observe that $\Phi \circ f$ belongs to $\mathcal{F}C^2_A(E)$ and that, by using \eqref{formula_L} it holds that
\begin{align*}
[\mathcal{L}(\Phi\circ f)](x) &=[(\Phi'\circ f)(x)]\left(\frac{1}{2}{\rm Tr}(QD^2f(x))+\langle Ax+b,Df(x)\rangle\right)+ \frac{1}{2}[(\Phi''\circ f)(x)]\langle QDf(x),Df(x)\rangle
\\
&+\int_E\Big[(\Phi\circ f))(x+y)-(\Phi\circ f))(x)-[(\Phi'\circ f)(x)]
\gen{D f(x),y}\chi_{B_1}(y)\Big]d\nu(y).
\end{align*}
Thus, adding and subtracting $\int_E [(\Phi'\circ f)(x)][f(x+y)-f(x)]d\nu(y)$ we get 
\begin{align*}
\mathcal{L}(\Phi\circ f)=&(\Phi'\circ f)\cdot(\mathcal{L}f)+\frac{1}{2}\Phi''(f)\langle Q \nabla f, \nabla f\rangle\notag\\
+&\int_E
\left[(\Phi\circ f)(\cdot+y)-(\Phi\circ f)-(\Phi'\circ f)\cdot\Big(f(\cdot+y)-f\Big)\right]d\nu(y).
\end{align*}
The invariance property in \eqref{inv_1} yields the claim.
\end{proof}

In order to state some logarithmic Sobolev inequalities, it is useful to introduce the notion of \emph{Bregman divergence}. For an arbitrary convex function $\varphi : \mathbb{R} \to \mathbb{R}$ with $\varphi \in C^1(\R)$, the 
Bregman divergence is defined as the first-order Taylor expansion of $\varphi$, i.e., 
\[
\mathrm{D}_\varphi(x,y) := \varphi(x) - \varphi(y) - \varphi'(y)(x-y), \qquad x,y \in \mathbb{R}.
\]
It is easy to see that $\mathrm{D}_\varphi \ge 0$ in $\R^2$ thanks to the convexity of $\varphi$. 

Bregman divergences are widely used in statistical learning and its applications (see, e.g., \cite{Amari16,AJLS17,NN09}), and they are also closely related to entropy theory (see \cite{Chafai04,Wang14,Wang14_nonlocal}).

We consider two different types of Bregman divergence: the first one obtained when $\varphi(x) = |x|^p$, $x\in \R$ and $p>1$. In this case
\begin{align*}
    \mathrm{D}_{|\cdot|^p}(x,y) = |x|^p - |y|^p - p\,|y|^{p-1}\operatorname{sgn}(y)\,(x-y), \qquad x,y \in \mathbb{R}.
\end{align*}
Note that $\mathrm{D}_{|\cdot|^2}(x,y) = (y-x)^2$.  
Another interesting Bregman divergence is the so-called \emph{Kullback--Leibler divergence} defined in correspondence of the function $\varphi(x)=x \ln x$, $x>0$. In this case, to simplify the notation, we denote $\mathrm{D}_\varphi$ by $\mathrm{KL}$ that is defined as follows
\begin{align*}
    \mathrm{KL}(x,y) :=y-x+x(\ln x-\ln y), \qquad x,y>0.
\end{align*}

Before going on we set
\begin{equation}\label{q*0}
q_0:=(D_+\|S(\cdot)_{|_H}\|_{\mathcal{L}(H)}^{-2})(0)=\liminf_{h\ra0^+}\frac{1}{h}\Bigg(\frac{1}{\|S(h)_{|_H}\|^{2}_{\mathcal{L}(H)}}-1\Bigg)
\end{equation}
where $D_+$ denotes the right-Dini derivatives, see \eqref{Dini}.
We prove two different inequalities assuming that $q_0 \in (0,\infty)$. In view of this, in the following lemma we provide a sufficient condition that guarantee that such assumption holds true.

\begin{lemma}
Assume that the semigroup $(S(t))_{t \ge 0}$ satisfies the estimate $\|S(t)_{|_H}\|_{\mathcal{L}(H)}\le e^{-\omega t}$
for any $t>0$ and some positive $\omega >0$. If, further, the growth bound $\omega_H$ of the semigroup $(S(t)_{|_H})_{t\geq 0}$ defined as
\begin{equation*}
    \omega_H: = \inf \left\{ \omega \in \mathbb{R} \;\middle|\; \exists M_\omega \geq 1 \text{ such that } \|S(t)_{|_H}\|_{\mathcal{L}(H)} \leq M_\omega e^{\omega t} \text{ for all } t \geq 0 \right\}
\end{equation*}
is not $-\infty$, then $q_0 \in (0,\infty)$.
\end{lemma}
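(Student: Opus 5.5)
The plan is to bound $q_0$ from below using the assumed estimate $\|S(t)_{|_H}\|_{\mathcal{L}(H)}\le e^{-\omega t}$, and from above by contradiction with $\omega_H>-\infty$, using the semigroup property of $(S(t)_{|_H})_{t\ge0}$. Throughout I write $g(h):=\|S(h)_{|_H}\|^{-2}_{\mathcal{L}(H)}-1$, so that $q_0=\liminf_{h\to0^+}g(h)/h$. Here $g(h)=+\infty$ is allowed if $S(h)_{|_H}=0$.

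\emph{Lower bound.} From $\|S(h)_{|_H}\|_{\mathcal{L}(H)}\le e^{-\omega h}$ we get $g(h)\ge e^{2\omega h}-1$ for every $h>0$. Hence
\[
q_0\ \ge\ \liminf_{h\to0^+}\frac{e^{2\omega h}-1}{h}=2\omega>0 .
\]
This step is immediate.

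\emph{Upper bound.} Suppose, for contradiction, that $q_0=+\infty$. By the definition of $\liminf$, for every $K>0$ there is $\delta_K>0$ such that $g(h)\ge Kh$ for all $h\in(0,\delta_K)$. Equivalently,
\[
\|S(h)_{|_H}\|_{\mathcal{L}(H)}\le(1+Kh)^{-1/2},\qquad h\in(0,\delta_K).
\]
Fix $t>0$ and take $n$ large enough that $h=t/n<\delta_K$. Submultiplicativity of the operator norm together with $S(t)_{|_H}=(S(t/n)_{|_H})^n$ gives
\[
\|S(t)_{|_H}\|_{\mathcal{L}(H)}\le\Big(1+\tfrac{Kt}{n}\Big)^{-n/2}.
\]
Letting $n\to\infty$ yields $\|S(t)_{|_H}\|_{\mathcal{L}(H)}\le e^{-Kt/2}$ for every $t>0$. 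At $t=0$ the bound holds trivially, since the norm is $1$.

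Since $K>0$ is arbitrary, the pair $\omega=-K/2$, $M_\omega=1$ is admissible in the definition of $\omega_H$ for every $K$. Hence $\omega_H=-\infty$, which contradicts the assumption. Therefore $q_0<\infty$, and combined with the lower bound, $q_0\in(0,\infty)$.

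The only delicate point is the upper bound. One has to be careful that the $\liminf$ being infinite gives a bound valid for \emph{all} small $h$, and not merely along a sequence. Otherwise the iteration with $h=t/n$ would not be available for every $n$. Since $\liminf=+\infty$ does provide such a uniform bound, the argument goes through without invoking Fekete's lemma or the spectral characterization of $\omega_H$.
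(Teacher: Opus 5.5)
Your proof is correct. The lower bound $q_0\ge 2\omega$ is the same as the paper's, but your upper bound takes a different route.

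The paper quotes the growth-bound identity from Engel--Nagel, namely $r(S(t)_{|_H})=e^{\omega_H t}\le\|S(t)_{|_H}\|_{\mathcal{L}(H)}$. This gives the sandwich $e^{\omega_H t}\le\|S(t)_{|_H}\|_{\mathcal{L}(H)}\le e^{-\omega t}$, and hence directly $2\omega\le q_0\le -2\omega_H$.

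You instead argue by contradiction and iterate the short-time bound through $S(t)_{|_H}=(S(t/n)_{|_H})^n$. This amounts to reproving the half of that identity that is actually needed, using only the algebraic semigroup law and submultiplicativity of the norm. In particular you need neither the spectral radius formula nor strong continuity of $(S(t)_{|_H})_{t\ge0}$ on $H$. The Engel--Nagel results are formulated for $C_0$-semigroups, and strong continuity of the restriction is not among the lemma's hypotheses.

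The paper's route is shorter and quantitative, but yours gives the same bound with a one-line change. If $q_0>c>0$, then $\|S(h)_{|_H}\|_{\mathcal{L}(H)}\le (1+ch)^{-1/2}$ for all small $h>0$. Your iteration then gives $\|S(t)_{|_H}\|_{\mathcal{L}(H)}\le e^{-ct/2}$ for all $t\ge 0$, so $\omega_H\le -c/2$. Letting $c\uparrow q_0$ yields $q_0\le -2\omega_H$.
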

\begin{proof}
The proof is based on the fact that 
\begin{align*}
  e^{\omega_Ht}\le   \|S(t)_{|_H}\|_{\mathcal{L}(H)} \leq e^{-\omega t}, \qquad\;\,  t \ge 0,
\end{align*}
(see \cite[Corollary 1.4 and Proposition 2.2 of Chapter IV]{EN00} for details).
Consequently,
$$\frac{e^{2\omega t}-1}{t}\le \frac{1}{t}\Bigg(\frac{1}{\|S(t)_{|_H}\|_{\mathcal{L}(H)}^2}-1\Bigg)\le\frac{e^{-2\omega_H t}-1}{t}$$
for any $t>0$, whence $2\omega \leq [D_+\|S(t)_{|_H}\|^{-2}](0) \leq -2\omega_H.$
\end{proof}

For a measure space $(\Omega,\mathcal{F},\mu)$ and a positive $\mathcal{F}$-measurable function $\psi:\Omega\ra\R$ we set
\begin{align*}
    \mathrm{Ent}_\mu(\psi):=\int_\Omega \psi\ln\psi d\mu-\left(\int_\Omega\psi d\mu\right)\ln\left(\int_\Omega\psi d\mu\right).
\end{align*}
We can now state and prove one of the main result of this section.

\begin{thm}\label{logSobolev_Strana}
Let assume that $q_0 \in (0,\infty)$. Then, for every $r\in [2,\infty)$ and $f \in \mathcal{F}C^2_A(E)$ the following inequality holds true 
\begin{align}\label{claim_Sob_1}
\int_E {\rm Ent}_\gamma| f(\cdot+y)|^rd\pi(y)\le&
\frac{r^2}{2q_0}\int_E|f|^{r-2}\langle Q Df,Df\rangle d(\gamma*\pi)\notag\\
&+\frac{r}{(r-1)q_0}\int_E\int_E \mathrm{D}_{|\cdot|^r}(f(x+y),f(x))d\nu(y)d(\gamma*\pi)(x).
\end{align}
\begin{proof}
We split the proof into two steps.

{\em Step 1.} We start considering a smooth function $f\in \mathcal{F}C^2_A(E)$ with positive infimum in $E$ and $r \in (1,\infty)$. Then, thanks to Theorem \ref{4} and the representation \eqref{generalized_mehler}, $P(t)f$ belongs to $\mathcal{F}C^2_A$ and has positive infimum too. Moreover $t\mapsto P(t)f$ is differentiable in $(0,\infty)$ and $\frac{d}{dt}P(t)f=\mathcal{L}P(t)f$ where the action of $\mathcal{L}$ on $P(t)f$ is defined in \eqref{formula_L}.
Estimate \eqref{idea_davide_semigruppo} yields that
$\|P(t)f\|_{r(t), r}\le \|f\|_{r}$ where $r(t):=1+(r-1)\|S(t)_{|_H}\|_{\mathcal{L}(H)}^{-2}$. Consequently, set $F_r(t):=\|P(t) f\|_{r(t),r}$ for every $t>0$, it follows that $(D^+F_r)(0)\le 0$.
Since 
$$F_r(t)=\left(\int_E\Big(\int_E |P(t)f(x+y)|^{r(t)}d\gamma(x)\Big)^{\frac{r}{r(t)}}d\pi(y)\right)^{\frac{1}{r}}$$
and, by Proposition \ref{Proposition_Dini_derivatives},
$$\Bigg(D^+ \frac{1}{r(\cdot)}\Bigg)(t)=-\frac{(r-1)(D_+\|S(\cdot)_{|_H}\|_{\mathcal{L}(H)}
^{-2})(t)}{r^2(t)}=:-\frac{(r-1)q^*(t)}{r^2(t)}$$
we get that
\begin{align*}
(D^+F_r)(t)&=
\frac{1}{r}\|P(t)f\|_{r(t),r}^{1-r}\int_E\left(\int_E |P(t)f(x+y)|^{r(t)}d\gamma(x)\right)^{\frac{r}{r(t)}-1}\\
&\times\bigg[r (r-1) \frac{q^*(t)}{r^2(t)}{\rm Ent}_\gamma(|P(t)f(\cdot+y)|^{r(t)})\\
&+r\int_E |P(t)f(x+y)|^{r(t)-2}(P(t)f)(x+y)\mathcal{L}(P(t)f)(x+y)d\gamma(x)\bigg]d\pi(y).
\end{align*}
Taking into account that $r(0)=r$ and $q^*(0)=q_0$, the inequality $(D^+F_r)(0)\le 0$ is equivalent to the estimate 
\begin{equation}\label{cons_dip}
\int_E{\rm Ent}_\gamma |f(\cdot+y)|^r d \pi(y)\le -\frac{r^2}{(r-1)q_0}\int_E |f(x+y)|^{r-2}f(x+y)(\mathcal{L}f)(x+y)d\gamma(x)d\pi(y).
\end{equation}
Now, using the equality \eqref{form_eu} with $\Phi(x)=|x|^r$, we deduce that the integral in the right-hand side of \eqref{cons_dip} equals to
\begin{align}\label{appl_lemma}
\int_E\left[\frac{1}{r}\int_E\Big(|f|^r-|f(\cdot+y)|^r+r|f|^{r-2}f\big(f(\cdot+y)-f\big) \Big)d\nu(y)-\frac{1}{2}(r-1)|f|^{r-2}\langle Q Df,Df\rangle\right] d(\gamma*\pi).
\end{align}
Noticing that
$$
(|f(x)|^r-|f(x+y)|^r+r|f(x)|^{r-2}f(x)\big(f(x+y)-f\big)= \mathrm{D}_{|\cdot|^r}(|f(x+y)|,|f(x)|)
$$
and replacing \eqref{appl_lemma} in \eqref{cons_dip} we get inequality \eqref{claim_Sob_1}.\\
{\em Step 2.} Here, we remove the assumption on the positivity  on the infimum of $f$, arguing by approximation.
Let assume that $f\in \mathcal {F}C^2_A(E)$ and define
$f_n:=(f^2+n^{-1})^{1/2}$. By the first part of the proof we have
\begin{align*}
\int_E {\rm Ent}_\gamma| f_n(\cdot+y)|^rd\pi(y)\le& \frac{r}{(r-1)q_0}\int_E\int_E \mathrm{D}_{|\cdot|^r}(|f_n(x+y)|,|f_n(x)|)d\nu(y)d(\gamma*\pi)(x)\notag\\
&+\frac{r^2}{2q_0}\int_E|f_n|^{r-2}\langle Q Df_n,Df_n\rangle d(\gamma*\pi).
\end{align*}
Since $0<f_n^r\le \|f^2+\frac{1}{n}\|_\infty^{r/2}$ and $|\nabla f_n|^2\le |\nabla f|^2$ for any $n \in \N$, by the dominated convergence theorem we deduce estimate \eqref{claim_Sob_1} for functions in $\mathcal{F}C^2_A(E)$.
\end{proof}
\end{thm}

\begin{rmk}{\rm As Step 1 in the proof of Theorem \ref{logSobolev_Strana} shows, estimate \eqref{claim_Sob_1} holds true for every $r\in (1,\infty)$ provided that $f$ has positive infimum. 

In the classical logarithmic Sobolev inequality, the right-hand side of estimate \eqref{claim_Sob_1} contains only the first term and an argument based on the monotone convergence theorem allows to extend the previous inequality to any smooth function even for $r \in (1,2)$. Here, the presence of the additional term containing the Bregman divergence seems to be a technical obstruction to the extension of the results for $r \in (1,2)$, in the general case.
}
\end{rmk}

\begin{rmk}{\rm In the classical case, the logarithmic Sobolev inequality related to the invariant measure of a semigroup is written in terms of the entropy of a function with respect to such a measure. In our case, even if the invariant measure for $P(t)$ is given by the convolution $\mu:=\gamma*\pi$, the logarithmic Sobolev inequality proved in Theorem \ref{logSobolev_Strana} does not involve the entropy of a function with respect to $\mu$ but is given in terms of the integral with respect to $\pi$ of the entropy with respect to $\gamma$. Moreover, we cannot deduce the classical version of the logarithmic Sobolev inequality since
$$\int_E {\rm Ent}_\gamma| f(\cdot+y)|d\pi(y)\le {\rm Ent}_{\gamma*\pi}(| f|)$$
for any $f\in B_b(E)$. 
Indeed, by the convexity of the function $(0,\infty)\ni x\mapsto x\ln x$ and the finiteness of the measure $\pi$, for every $f\in B_b(E)$ it holds that
$$\left(\int_E |f|d(\gamma *\pi)\right)\ln\left( \int_E|f|d(\gamma*\pi)\right)\le \int_E\left(\int_E|f(\cdot+y)|d\gamma\ln\int_E|f(\cdot+y)|d\gamma\right)d\pi(y).$$
}
\end{rmk}

To go further, we recall that the hypercontractivity of the classical Ornstein--Uhlenbeck semigroup $R(t)$  is equivalent to what is called \emph{weak hypercontractivity} in \cite{NPY20} which states that for every $f \in L^1(E,\gamma)$ with $e^f \in L^1(E,\gamma)$ and for every $q \in [1,e^{2t}]$, it holds
\begin{align}\label{weak_hyp_gauss}
    \|e^{R(t)f}\|_{L^q(E,\gamma)} \le \|e^f\|_{L^1(E,\gamma)},
\end{align}
where $\gamma$ is the unique invariant measure for $R(t)$, which is of Gaussian type. A proof of such result can be found in \cite[Proposition 4]{BakryEmery}. Inequalities of type \eqref{weak_hyp_gauss} have also been studied in connection with purely jump processes and $\Phi$-entropy inequalities (see, e.g., \cite{BobLed,BT06,Hariya18}).  

We will show that a weak hypercontractivity type estimate holds in our setting as well, and that it implies a logarithmic Sobolev inequality formulated in terms of the  Kullback--Leibler divergence.

\begin{thm}\label{thm_logSobolev_KL}
Assume that $q_0 \in (0,\infty)$ $($see \eqref{q*0}$)$. Then the following statements hold true:
\begin{enumerate}[(i)]
    \item\label{1} for every $f \in L^1(E,\gamma*\pi)$ with $e^f \in L^1(E,\gamma*\pi)$ and for every $q \in [1, \|S(t)_{|_H}\|_{\mathcal{L}(H)}^{-2}]$, it holds that
    \begin{align}\label{weak_hyp_convoluted}
        \| e^{P(t)f} \|_{q,1} \le \| e^f \|_{L^1(E,\gamma*\pi)};
    \end{align}

    \item 
    for every $f \in \mathcal{F}C_A^2(E)$, we have
    \begin{align}\label{logSobolev_KL}
        \int_E \mathrm{Ent}_\gamma\big[e^{f(\cdot+y)}\big]\, d\pi(y) 
        \le & \frac{1}{2q_0} \int_Ee^{f(x)}\|\nabla_H f(x)\|^2_H d(\gamma*\pi)(x)\notag\\
        &+ \frac{1}{q_0}\int_E\int_E \mathrm{KL}(e^{f(x)},e^{f(x+y)})d\nu(y)d(\gamma*\pi)(x).
    \end{align}
    \end{enumerate}
\end{thm}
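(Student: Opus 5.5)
For part (i) I plan to reduce everything to a Gaussian weak hypercontractivity estimate for $P^\gamma_T$, with $T=S(t)$, and then to use the skew property of $\pi$ in the non-Gaussian variable. First, for any $h$ with $e^h\in L^1(E,\gamma)$ and $1\le q\le \|T_{|_H}\|^{-2}_{\mathcal{L}(H)}$, I want
\[
\|e^{P_T^\gamma h}\|_{L^q(E,\gamma)}\le \|e^h\|_{L^1(E,\gamma)} .
\]
This follows from Theorem \ref{thm_hyperclassical} by the standard trick. For $p>1$ put $q_p:=1+(p-1)\|T_{|_H}\|^{-2}_{\mathcal{L}(H)}$. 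Jensen gives $e^{P_T^\gamma h/p}\le P^\gamma_T(e^{h/p})$, hence
\[
\|e^{P^\gamma_T h}\|_{L^{q_p/p}(E,\gamma)}^{1/p}=\|e^{P^\gamma_T h/p}\|_{L^{q_p}(E,\gamma)}\le\|e^{h/p}\|_{L^p(E,\gamma)}=\|e^h\|_{L^1(E,\gamma)}^{1/p}.
\]
Since $q_p/p\uparrow\|T_{|_H}\|^{-2}$ as $p\to\infty$, monotonicity of $L^q$-norms on probability spaces gives the claim for every admissible $q$.

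Next I fix $y$ and write $P(t)f(x+y)=\int_E [P^\gamma_T h_{y,z}](x)\,d\rho_\pi(z)$ with $h_{y,z}(u):=f(u+Ty+z)$, using $\rho=\rho_\gamma*\rho_\pi$ (Proposition \ref{krelly}). Jensen in $z$ gives $e^{P(t)f(x+y)}\le\int_E e^{P^\gamma_T h_{y,z}(x)}d\rho_\pi(z)$. Minkowski's integral inequality in $L^q(E,\gamma)$ together with the Gaussian estimate then yields
\[
\|e^{P(t)f(\cdot+y)}\|_{L^q(E,\gamma)}\le\int_E\int_E e^{f(u+Ty+z)}\,d\gamma(u)\,d\rho_\pi(z).
\]
Integrating in $d\pi(y)$ and using $(\pi\circ T^{-1})*\rho_\pi=\pi$, the right-hand side becomes $\int_E e^f\,d(\gamma*\pi)$, which is \eqref{weak_hyp_convoluted}. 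I would carry this out first for $f\in B_b(E)$ and then pass to general $f$ by truncation, using Fatou and monotone convergence.

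For part (ii) I will differentiate along the curve given by (i), as in Theorem \ref{logSobolev_Strana}. Let $q(t):=\|S(t)_{|_H}\|^{-2}_{\mathcal{L}(H)}$, so $q(0)=1$, and set $G(t):=\|e^{P(t)f}\|_{q(t),1}$. By (i), $G(t)\le G(0)=\|e^f\|_{L^1(E,\gamma*\pi)}$, hence $(D^+G)(0)\le0$. For $f\in\mathcal{F}C^2_A(E)$, Theorem \ref{4} makes $t\mapsto P(t)f$ differentiable with derivative $\mathcal{L}P(t)f$. A formal computation at $t=0$ of $\frac{d}{dt}\big(\int_E e^{q(t)u_t}d\gamma\big)^{1/q(t)}$ with $u_t=P(t)f(\cdot+y)$ gives $q'(0)\,\mathrm{Ent}_\gamma(e^{u_0})+\int_E e^{u_0}\,\partial_tu\,d\gamma$. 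The Dini-derivative version of this uses Proposition \ref{Proposition_Dini_derivatives}: the entropy is nonnegative, so the $\liminf$ defining $q_0$ appears with the correct sign. This yields
\[
q_0\int_E\mathrm{Ent}_\gamma\big[e^{f(\cdot+y)}\big]d\pi(y)\le-\int_E e^f\,\mathcal{L}f\,d(\gamma*\pi).
\]

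Then I apply \eqref{form_eu} with $\Phi=\exp$. The nonlocal integrand is $e^{f(x)}-e^{f(x+y)}+e^{f(x)}(f(x+y)-f(x))=-\mathrm{KL}(e^{f(x)},e^{f(x+y)})$. The local term is $-\frac12\int_E e^f\langle QDf,Df\rangle\,d(\gamma*\pi)$, which I identify with the $\|\nabla_H f\|_H^2$ term as stated. Together these give \eqref{logSobolev_KL}.

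The main obstacle is this differentiation step: justifying passage of $D^+$ under the $\gamma$- and $\pi$-integrals and handling only a one-sided $\liminf$ in $q_0$. I would mirror Step 1 of Theorem \ref{logSobolev_Strana}, using boundedness of $f$ and of $\mathcal{L}P(t)f$ for $f\in\mathcal{F}C^2_A(E)$ to apply dominated convergence to the difference quotients.
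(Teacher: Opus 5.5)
Your part (ii) is the paper's argument: differentiate $t\mapsto\|e^{P(t)f}\|_{q(t),1}$ at $t=0$, use $(D^+\cdot)(0)\le0$ from (i), and insert \eqref{form_eu} with $\Phi=\exp$. Your remark that nonnegativity of the entropy lets the $\liminf$ defining $q_0$ enter with the right sign is the correct way to handle the Dini derivatives.

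Part (i), however, follows a genuinely different route. The paper applies Jensen to the whole operator, $e^{P(t)f/r}\le P(t)e^{f/r}$. It then invokes the mixed-norm estimate \eqref{idea_davide_semigruppo}, and hence ultimately the Chevet--Saphar machinery of Lemma~\ref{ext_p} and Proposition~\ref{pro_girata}. Finally it lets $r\to\infty$ in the inner exponent of $\|e^{P(t)f/r}\|^r_{r(t),r}=\|e^{P(t)f}\|_{r(t)/r,1}$. You instead take the limit at the purely Gaussian level, deriving weak hypercontractivity of $P^\gamma_{S(t)}$ from Theorem~\ref{thm_hyperclassical}. You then treat the jump part fibrewise, using $\rho=\rho_\gamma*\rho_\pi$ (Proposition~\ref{krelly}), Jensen in $z$, Minkowski's integral inequality in $L^q(E,\gamma)$ and the skew identity $(\pi\circ T^{-1})*\rho_\pi=\pi$. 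Every step checks out.

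Your route is self-contained and avoids tensor products entirely. The same fibrewise Jensen--Minkowski argument, with Nelson's $L^p$--$L^q$ bound in place of the exponential one, even reproves both estimates of Theorem~\ref{thm_hyper}. The paper's route instead presents (i) as a corollary of its mixed-norm hypercontractivity, which is exactly the connection the section aims to exhibit.

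Two minor points. First, monotonicity of $L^q(\gamma)$-norms gives your Gaussian bound only for $q<\|T_{|_H}\|^{-2}_{\mathcal{L}(H)}$. The endpoint, which is precisely the exponent $q(t)$ used in (ii), needs Fatou's lemma along $q_p/p\uparrow\|T_{|_H}\|^{-2}_{\mathcal{L}(H)}$; the paper passes over the same limit.

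Second, renaming $\langle QDf,Df\rangle$ as $\|\nabla_Hf\|_H^2$ is the same silent identification the paper makes. It is a convention, not a consequence of $H$ being the Cameron--Martin space of $\gamma$. For $E=\R$, $A=-1$, $Q=2$, $\nu=0$ one has $\gamma=\mathcal{N}(0,1)$, $q_0=2$ and $\langle QDf,Df\rangle=2|f'|^2$. This gives Gross' sharp bound $\mathrm{Ent}_\gamma(e^f)\le\frac12\int_{\R}e^f|f'|^2\,d\gamma$, whereas $\|\nabla_Hf\|_H^2=|f'|^2$ would give the false constant $\frac14$.
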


\begin{proof} (i) Let $f \in L^1(E,\gamma*\pi)$ with $e^f \in L^1(E,\gamma*\pi)$ and let $r>1$. Observe that, by Jensen's inequality, we have
\begin{align}
    [P(t)e^{\frac{1}{r}f}](x) 
    &= \int_E \mathrm{exp}\left(\frac{1}{r}f(S(t)x+y)\right) d\mu_t(y) \notag \\
    &\ge \mathrm{exp}\left(\frac{1}{r}\int_E f(S(t)x+y) d\mu_t(y)\right)
    = e^{\frac{1}{r}[P(t)f](x)}, \qquad\; \, x \in E. \label{Jensen_exp}
\end{align}
Now, let $r(t) = 1 + (r-1) \|S(t)_{|_H}\|^{-2}_{\mathcal{L}(H)}$, and observe that, by \eqref{idea_davide_semigruppo}, we have for every $t>0$
\begin{align}
    \| P(t) e^{\frac{1}{r}f} \|_{r(t),r} 
    \le \| e^{\frac{1}{r}f} \|_{L^r(E,\gamma*\pi)}
    = \| e^f \|^{\frac{1}{r}}_{L^1(E,\gamma*\pi)}. \label{norm_exp}
\end{align}
Combining \eqref{Jensen_exp} and \eqref{norm_exp}, we obtain
\begin{align}\label{exp_Pt}
    \| e^{\frac{1}{r}P(t)f} \|^r_{r(t),r} \le \| e^f \|_{L^1(E,\gamma*\pi)}.
\end{align}
Finally, observe that for every $t>0$ it holds 
$\lim_{r \to +\infty} \frac{r(t)}{r} = \|S(t)_{|_H}\|_{\mathcal{L}(H)}^{-2}$, and therefore, by \eqref{exp_Pt}, we get
\begin{align*}
    \| e^{P(t)f} \|_{\|S(t)_{|_H}\|_{\mathcal{L}(H)}^{-2},1} 
    = \lim_{r \to +\infty} \| e^{\frac{1}{r}P(t)f} \|^r_{r(t),r} 
    \le \| e^f \|_{L^1(E,\gamma*\pi)}
\end{align*}
whence the claim.

(ii) We set $q(t)=\|S(t)_{|_H}\|^{-2}_{\mathcal{L}(H)}$ $t>0$ and  consider the function
\begin{align*}
    F(t):=\| e^{P(t)f} \|_{q(t),1}, \quad\; t>0.
\end{align*}
Long but straightforward calculations give
\begin{align*}
     (D^+F)(t)  =\int_E & \left(\int_E e^{q(t)[P(t)f](x+y)}d\gamma(x)\right)^{\frac{1}{q(t)}-1}\\
    &\times\bigg(-\frac{q^*(t)}{q^2(t)}\left(\int_E e^{q(t)[P(t)f](x+y)}d\gamma(x)\right)\ln\int_Ee^{q(t)[P(t)f](x+y)}d\gamma(x)\\
    & +\frac{q^*(t)}{q(t)}\int_E e^{q(t)[P(t)f](x+y)}[P(t)f](x+y)d\gamma(x)\\
    &+\frac{1}{q(t)}\int_Ee^{q(t)[P(t)f](x+y)}(\mathcal{L}P(t)f)(x+y)d\gamma(x)\bigg)d\pi(y),
\end{align*}
where $q^*(t)= (D_+ q)(t)$. Taking into account that $q(0)=1$ and that $q^*(0)=q_0$, we deduce
\begin{align*}
    (D^+F)(0) &=-q_0\int_E\left[\left(\int_Ee^{f(x+y)}d\gamma(x)\right)\ln\int_Ee^{f(x+y)}d\gamma(x)\right]d\pi(y)\\
    &+ q_0\int_E\int_Ee^{f(x+y)}f(x+y)d\gamma(x)d\pi(y)+\int_E\int_Ee^{f(x+y)}(\mathcal{L}f)(x+y)d\gamma(x)d\pi(y).
\end{align*}
Equality \eqref{form_eu} with $\Phi(x)=e^x$ yields
\begin{align*}
\int_E\int_Ee^{f(x+y)} & (\mathcal{L}f) (x+y)d\gamma(x)d\pi(y) =-\frac{1}{2}\int_Ee^{f(x)}\|\nabla_H f(x)\|^2_Hd(\gamma*\pi)(x)\\
& +\int_E\int_E e^{f(x)}-e^{f(x+y)}+e^{f(x)}(f(x+y)-f(x))d\nu(y)d(\gamma*\pi)(x)\\
&=-\frac{1}{2}\int_Ee^{f(x)}\|\nabla_H f(x)\|^2_H d(\gamma*\pi)(x)- \int_E\int_E \mathrm{KL}(e^{f(x)},e^{f(x+y)})d\nu(y)d(\gamma*\pi)(x).
\end{align*}
Now, thanks to property \eqref{1} $(D^+F)(0)\leq 0$ and recalling that $q_0$ is positive, we get \eqref{logSobolev_KL}. 
\end{proof}

\begin{rmk}
If $g$ belongs to $\mathcal{F}C^2_A(E)$ and it has positive infimum, then
taking $f=r\ln g$ in \eqref{logSobolev_KL} we get
\begin{align}
    \int_E \mathrm{Ent}_\gamma\big[g^r(\cdot+y)\big]\, d\pi(y) 
        \le & \frac{r^2}{2q_0} \int_E  g^{r-2} \|\nabla_H g\|_H^2 \, d(\gamma*\pi)\notag\\
        &+ \frac{1}{q_0}\int_E \int_E \mathrm{KL}\big(g^r(x), g^r(x+y)\big)\, d\nu(y)\, d(\gamma*\pi)(x).\label{logSobolev_KL_infpositivo}
\end{align}
This inequality is similar to \eqref{claim_Sob_1}, with the Bregman divergence $\mathrm{D}_{|\cdot|^r}$ replaced with the Kullback--Leiber divergence and coincides with the classical logarithmic Sobolev inequality when $\pi= \delta_0$.
Actually, in this latter case, inequalities \eqref{hyp_classica}, \eqref{log-sob} and \eqref{weak_hyp_gauss} are equivalent to each other (see \cite[Proposition 4]{BakryEmery}) and also to the inequality
\begin{align*}
        \mathrm{Ent}_\gamma[e^{f}] 
        \le & c \int_Ee^{f}\|\nabla_H f\|^2_H d\gamma\notag
    \end{align*}
which coincides with the estimate \eqref{logSobolev_KL} in the purely Gaussian case, (see also  \cite[Theorem 7.1]{BT06}).
In our case, the lack of symmetry of the operator $\mathcal L$ does not allow us to use the same arguments to show the same equivalences. Moreover, it is not difficult to show that
\begin{align*}
\mathrm{D}_{|\cdot|^p}(x,y)\le \mathrm{KL}(y^p,x^p),
\end{align*}
for any $x,y>0$. Therefore estimate \eqref{claim_Sob_1} is stronger than \eqref{logSobolev_KL_infpositivo} when they hold. However in general, it can be proved that there exists $t_0<0$ such that 
$$\mathrm{D}_{|\cdot|^p}(x,y)\le \mathrm{KL}(y^p,x^p)$$
holds true for $t_0<\frac{y}{x}<0$.
Consequently, in the general case, estimates \eqref{claim_Sob_1} and \eqref{logSobolev_KL_infpositivo} cannot be compared.
\end{rmk}

\section*{Declarations}

\subsection*{Acknowledgments} 
The authors wish to thank D. Pallara (Salento), D. Addona (Parma) and D. Bignamini (Insubria) for valuable discussions and the suggestion of some useful references. 

\subsection*{Fundings} The authors are members of GNAMPA (Gruppo Nazionale per l’Analisi Matematica, la Probabilit\`a
e le loro Applicazioni) of the Italian Istituto Nazionale di Alta Matematica (INdAM). The authors were partially supported by the INdAM-GNAMPA Project, codice CUP \#E5324001950001\#.


\subsection*{Data Availability} Data sharing not applicable to this article as no datasets were generated or analysed during the current study.

\end{document}